    \DeclareMathAlphabet{\mathbrush}{T1}{pbsi}{xl}{n}
        \DeclareMathAlphabet{\mathbrush}{T1}{pbsi}{xl}{n}
        \newcommand*{\mathbrush}[1]{\text{{\fontfamily{qzc}\selectfont #1}}}
\theoremstyle{plain}
    \newtheorem{theorem}{Theorem}
    \newtheorem{corollary}[theorem]{Corollary}
    \newtheorem{lemma}{Lemma}[section]
\theoremstyle{definition}
    \newtheorem{definition}[lemma]{Definition}
    \newtheorem{question}{Question}
\theoremstyle{remark}
\DeclareMathOperator{\id}{id}
\let\hat=\widehat
\begin{document}

\title{The Tree Pulldown Method: McLaughlin's Conjecture and Beyond}
\keywords{computability theory, higher computability theory, hyperarithmetic, subgeneric, McLaughlin, forcing with tagged trees, implicit definability, admissibility, computable tree, arithmetic singleton}
\author{Leo A. Harrington}
\address{University of California, Berkeley}
\email{leoanthony@berkeley.edu}

\author{Peter M. Gerdes}
\address{Indiana University, Bloomington}
\email{gerdes@invariant.org}

\subjclass[2020]{Primary 03D60, 03D30; Secondary (03C70)}

\date{}
\begin{abstract}
This paper finally fully elaborates the tree pulldown method used by one of us (Harrington) to settle McLaughlin's conjecture.  This method enables the construction of a computable tree \( T_0 \) whose paths are incomparable over \( \zeron{\alpha} \) and resemble \( \alpha \)-generics while leaving us almost completely free to specify the homeomorphism class of \( [T_0] \).  While a version of this method for  \( \alpha = \omega \) previously appeared in \cite{Simpson2016Implicit} we give the general construction for an arbitrary ordinal notation \( \alpha \).  We also demonstrate this method can be applied to a `non-standard' ordinal notation to establish the existence of a computable tree whose paths are hyperarithmetically incomparable and resemble \( \alpha \)-generics for all \( \alpha < \wck \).   Finally, we verify a number of corollaries including solutions to problems  57\( ^{*} \) , 62, 63 (McLaughlin's conjecture), 65 and 71 from \cite{Friedman1975One}.  
\end{abstract}

\maketitle
\section{History}

The results presented here were all first proved by one of the authors (Harrington), but were never published.  Despite solving a number of well-known open problems \textemdash specifically, problems 57\( ^{*} \) , 62, 63, 64, 65 and 71 from Friedman's famous list of open problems in mathematical logic \cite{Friedman1975One} \textemdash the primary reference for the tree pulldown method described in this paper has remained a set of mimeographed (and later scanned) notes that very briefly sketch the approach.  In this paper, we finally provide a detailed presentation of the general method (both for standard and non-standard ordinal notations) and prove (perhaps not always in the way originally imagined) all the claimed applications excepting 64 which requires a non-trivial additional construction beyond the scope of this paper. 

 Even though the full method has never been published, a version of this method for finite ordinals appeared in \cite{Simpson2016Implicit}  \textemdash providing a proof of problem 63 from \cite{Friedman1975One}.  Several unpublished manuscripts and mathoverflow posts have attempted to explain aspects of the construction or its applications.  We especially want to call attention to Hjorth's  detailed (but also unpublished) manuscript \cite{hjorth_argument_2003} which used the method in this paper to solve problem 57\( ^{*} \).  Hjorth chose to present the argument as a ``static set theoretical construction'' instead of the more dynamic computability theoretic construction given here and there are several interesting aspects of his construction that differ from those given in this paper.   Unfortunately, not all manuscripts are so high quality.  One of the authors (Gerdes) apologizes for the errors in their earlier manuscript presenting this approach \cite{gerdes_harringtons_2010}.  

It is worth putting the method presented here in context.  In \cite{Steel1978Forcing}, Steel introduced a notion of forcing \textemdash forcing with tagged trees \textemdash which builds a tree \( T \) along with a sequence of paths \( g_i \in [T] \) to solve many of these problems relative to a generic real (the tree \( T \)).  The tree-pulldown approach presented here can be thought of as an effective variant of this approach which allows for proofs of unrelativized variants of many of the theorems proved by forcing with tagged trees.  While the analogy is illuminating, it won't play any role in this paper.

This method is also closely connected to the study of \( \REA[\omega] \) sets.  For instance, a similar pulldown approach was used by one of the authors (Harrington) to construct an arithmetically low  \( \REA[\omega] \) set \( A \)  and a pair of arithmetically incomparable  \( \REA[\omega] \) which was later extended by Simpson to show that for any \( \REA[\omega] \) operator \( E(X) \) we can (effectively) find an \( \REA[\omega] \) set \( A \) such that \( E(A) \Tequiv \zeron{\omega} \) \textemdash a result that remains at the heart of our understanding of the arithmetic degrees of  \( \REA[\omega] \) sets.  It also remains important in the study of \( \pizn{2} \) singletons\footnote{Here we mean \( \pizn{2} \) singletons in \( \cantor \) (sets) which are equivalent \textemdash up to Turing degree \textemdash to \( \pizn{1} \) singletons in \( \baire \).} and was recently modified by one of the authors (Gerdes) to demonstrate the existence of a \( \pizn{2} \) singleton of minimal arithmetic degree \cite{Gerdes2023PiSingleton}.

\section{Notation}
We largely adopt standard notation as found in \cite{Odifreddi1999Classical} which we briefly review.  Set difference is denoted by \( X \setminus Y \),   the \( e \)-th set r.e. in \( X \) is  \( \REset(X){e} \) and the \( e \)-th computable functional applied to \( X \) by \( \recfnl{e}{X}{} \) with the first \( s \) steps of that computation denoted by \( \recfnl[s]{i}{X}{y} \).    We denote convergence and divergence by \( \recfnl{i}{X}{y}\conv   \) and \( \recfnl{i}{X}{y}\diverge   \) respectively. 

We write elements of \( \bstrs \) and \( \wstrs \) (referred to as strings) like \( \str{x_0, x_1, \ldots, x_{n-1}} \) with \( \estr \) denoting the empty string.  We call elements of \( \baire \) or \( \cantor \) reals and identify sets with their characteristic functions in \( \cantor \).  For strings and reals we denote (non-strict) extension by \( \subfun \), incompatibility by \( \incompat \), compatibility by \( \compat \), the concatenation of \( \sigma, \tau \)  by \( \sigma\concat\tau \)  and use \( \leftofeq \) to denote the lexicographic ordering (over \( <\)) while \( \leftof \) denotes the ordering left-of (\( \sigma \leftof \tau \iff \sigma \leftofeq \tau \land \sigma \incompat \tau \)).  The length of \( \sigma \) is written \( \lh{\sigma} \), \( \sigma^{-} \) refers to the immediate predecessor of \( \sigma \) under \( \subfun \) and we denote the string consisting of \( k \) copies of \( m \) by \( \str{m}^{k} \).  The longest common initial segment of \( \sigma \) and \( \tau \) is \( \sigma \meet \tau \).  We say \( f \) (string or real) meets a set of strings \( X \) \textemdash denoted \( f \supfun X \) \textemdash if \( f \supfun \tau \) for some \( \tau \in X \) and say \( f  \) strongly avoids \( X \) on \( T \) if there is some \( \sigma \subfun f \) such that no \( \sigma' \supfun \sigma \) is in \( T \isect X \).

We generally elide the distinction between strings, pairs and their codes but when necessary we write \( \godelnum{\pair{x}{y}} \), to indicate we mean the code.  Our coding will satisfy \( \godelnum{\estr} = 0 \) and \( \sigma \subfun \tau  \implies \godelnum{\sigma} \leq \godelnum{\tau} \).  We define \( A \Tplus B \), \( \setcol{X}{n} \) and \( \setcol{X}{< n}  \) standardly and let \( \Tplus_{n \in S} X_n = \set{\pair{n}{y}}{n \in S \land y \in X_n}  \) and extend this notion to functions in the obvious way.   We let \( \recfnl{e}{\sigma}{} \) denote the longest string  \( \tau  \) with \( \lh{\tau} \leq \lh{\sigma} \) such that \( \tau(n) = \recfnl[\lh{\sigma}]{e}{\sigma}{n}\conv \).  Trees will be downward closed subsets of \( \wstrs \) and define \( [T] \) to be the set of  paths which extend infinitely many elements of \( T \) (this takes on the usual meaning when \( T \) is a tree).   We write \( \pruneTree{T} \) for the set of strings in \( T \) extended by a path in \( [T] \) and abbreviate \( T \isect \set{\sigma \in \wstrs}{\lh{\sigma} \leq n} \) by \( T\restr{n} \).  

  \( \kleeneO \) denotes Kleene's set of ordinal notations ordered by  \( \Oless \) with \( \kleeneO+, \kleeneO- \) denoting the sets of successor and limit notations respectively.  We write \( \kleeneO1 \) for some  \( \piin{1} \) path \textit{through} \( \kleeneO \) \textemdash where  not to be confused with a path \textit{in}  \( \kleeneO \) which is any linearly ordered downward closed subset of \( \kleeneO \).  An apparent notation is a member of \( \kleeneO* \supset \kleeneO \) where \( \kleeneO* \) drops the well-foundedness requirement (see appendix \ref{app:defo}).  We write \( \Oadd \) for effective addition of notations, assume that \( 0 \) is the unique notation for the least ordinal and when \( \lambda \in \kleeneO- \) we write \( \Olim{\lambda}{n} \) for the \( n \)-th element in the effective limit defining \( \lambda \).  We denote the height of \( x \) under the partial order \( <_R \) by \( \hgt[<_R]{x} \) with the ordinal corresponding to the notation \( \alpha \) denoted \( \Ohgt{\alpha} \) and adopt standard open/closed interval notation, e.g., \( x \in [y, z) \) means \( y \leq_R x <_R z \) with \( R \) inferred from context. Finally, we define \( \jumpn{X}{\alpha}, \alpha \in \kleeneO \) to satisfy \( \TPlus_{\gamma \Oleq \beta} \setcol{\jumpn{X}{\alpha}}{\gamma} = \jumpn{X}{\beta} \) whenever \( \beta \Oleq \alpha \) and assume\footnote{Note that we can help ourselves to \( \zeroj \) to distinguish between non-zero notations.} that we can computably recover \( \beta \) from \( \jumpn{X}{\beta} \).

  \section{Preliminaries}  

  Before we can state the main results of this paper, we will need a few definitions.

  \begin{definition}\label{def:alpha-equivalent}
  \( X \) is \( \beta \) reducible to \( Y \)   \textemdash denoted \( X \Tleq^{\beta} Y \) \textemdash if \( \jumpn{Y}{\beta} \Tgeq X \) with \( \beta \) equivalence, comparability and incomparability defined in the natural way.
\end{definition}
    
    Some care is warranted here as \( \beta \) reducibility (and hence equivalence) is only transitive when \( \beta \) is an additively indecomposable limit ordinal but this won't matter for our purposes.

  \begin{definition}\label{def:monotone}
    A function \( \Gamma \) from strings to strings is monotone if whenever \( \Gamma(\tau)\conv \) we have \( \sigma \subfunneq \tau \) iff \( \Gamma(\sigma) \subfunneq \Gamma(\tau) \).  A monotone function that respects \( \leftof \) is an order-preserving function and if an order preserving function respects \( \meet \) (longest common initial segment) it is an expansionary function.     
  \end{definition}

  Observe that the definition of monotone requires that the domain of \( \Gamma \) be downward closed and that an expansionary function is one where \( \Gamma(\sigma\concat[x]) \supfun \Gamma(\sigma)\concat[x'] \) (and is order preserving) thereby justifying the name.  When dealing with a function \( \Gamma \)  from strings to strings we abbreviate \( \Gamma\restr{\set{\sigma}{\lh{\sigma} \leq n}} \) by \( \Gamma\restr{n} \) just as we do with trees.

\begin{lemma}\label{lem:expan-homeo}
If \( \Theta \) is a monotone function on strings then \( \Theta  \) induces a functional with domain \( [\dom \Theta] \) which is homeomorphically mapped to  \( [\rng \Theta] \).  If \( \Theta \) is expansionary and \( T \) is the downward closure of \( \rng \Theta \) then \( \Theta \) is a homeomorphism of \( [\dom \Theta] \) and  \( [T]  \).        
\end{lemma}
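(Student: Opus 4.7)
For \( f \in [\dom \Theta] \) the initial segments \( \sigma \subfun f \) in \( \dom \Theta \) are cofinal in \( f \), and by monotonicity the values \( \Theta(\sigma) \) form a \( \subfun \)-chain with \( \lh{\Theta(\sigma)} \) strictly increasing in \( \lh{\sigma} \).  I would therefore define the induced functional by \( \hat\Theta(f) = \bigcup \{\Theta(\sigma) : \sigma \subfun f,\, \sigma \in \dom \Theta\} \), a well-defined element of \( \baire \) lying in \( [\rng \Theta] \).  Continuity in both directions is then routine: a prefix of \( \hat\Theta(f) \) of any desired length is witnessed by a long enough \( \sigma \subfun f \) in \( \dom \Theta \), and conversely a short prefix of \( f \) extends to some \( \tau \in \dom \Theta \) whose image \( \Theta(\tau) \subfun \hat\Theta(f) \) determines it.

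Next I would verify \( \hat\Theta \) is a bijection onto \( [\rng \Theta] \).  For surjectivity, given \( g \in [\rng \Theta] \) I pick \( \Theta(\sigma_1) \subfunneq \Theta(\sigma_2) \subfunneq \cdots \subfun g \) of unbounded length and apply the monotone iff to conclude \( \sigma_1 \subfunneq \sigma_2 \subfunneq \cdots \) in \( \dom \Theta \), whose union is the desired preimage.  For injectivity I would first observe that monotonicity combined with downward closure forces \( \Theta \) to be injective on any \( \sigma \in \dom \Theta \) admitting a strict \( \dom \Theta \)-extension: if \( \Theta(\sigma_1) = \Theta(\sigma_2) \) with \( \sigma_1 \neq \sigma_2 \) and some \( \tau \supfun \sigma_1 \) lies in \( \dom \Theta \), then the iff forces both \( \sigma_i \subfunneq \tau \), hence they are compatible, contradicting \( \Theta(\sigma_1) = \Theta(\sigma_2) \).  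So for \( f \neq g \) in \( [\dom \Theta] \) any long enough incompatible \( \sigma \subfun f,\, \tau \subfun g \) in \( \dom \Theta \) have incompatible \( \Theta \)-images, forcing \( \hat\Theta(f) \neq \hat\Theta(g) \).

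For the expansionary case the first paragraph applies unchanged, so the only new work is showing \( [\rng \Theta] = [T] \) with \( T \) the downward closure of \( \rng \Theta \).  The key extra fact I extract is that distinct immediate \( \dom \Theta \)-successors of \( \sigma \) give \( \Theta \)-images differing at their first symbol past \( \Theta(\sigma) \): if \( \sigma \concat \str{x},\, \sigma \concat \str{x'} \in \dom \Theta \) with \( x \neq x' \), then respecting \( \meet \) yields \( \Theta(\sigma \concat \str{x}) \meet \Theta(\sigma \concat \str{x'}) = \Theta(\sigma) \), while expansionarity ensures each strictly extends \( \Theta(\sigma) \).  Given \( g \in [T] \), I can then inductively build \( \tau_n \in \dom \Theta \) with \( \Theta(\tau_n) \subfun g \) by matching the symbol \( g(\lh{\Theta(\tau_n)}) \) to the unique \( x \) whose image \( \Theta(\tau_n \concat \str{x}) \) begins with that symbol; setting \( f = \bigcup \tau_n \) then gives \( \hat\Theta(f) = g \).

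The main obstacle I anticipate is checking that each inductive step actually succeeds, i.e., that such an \( x \) always exists.  For this I would use that \( g \in [T] \) supplies arbitrarily long \( \rho \in T \) with \( \rho \subfun g \) and \( \rho \supfun \Theta(\tau_n) \); each such \( \rho \) extends (by the definition of \( T \) as a downward closure) to some \( \Theta(\tau) \in \rng \Theta \); and downward closure of \( \dom \Theta \) extracts a \( \tau_n \concat \str{x} \subfun \tau \) whose image must begin with the required symbol by the structural observation above.  Combined, this gives \( [T] \subseteq \hat\Theta([\dom \Theta]) \), and the reverse inclusion being automatic from \( \rng \Theta \subseteq T \) completes the identification and hence the homeomorphism.
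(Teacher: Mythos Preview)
Your proposal is correct and follows essentially the same route as the paper's proof, just with considerably more detail. The paper's argument is very terse: it observes that monotonicity makes \( \Theta(g\restr{l}) \) strictly increasing, asserts that every element of \( [\rng \Theta] \) has a unique preimage, declares bicontinuity ``easy to verify,'' and for the expansionary case simply states \( [\rng \Theta] = [T] \) without further justification. Your write-up expands exactly these points---the injectivity via the monotone iff, the explicit construction of a preimage for \( g \in [T] \) by symbol-matching using meet-preservation, and the use of arbitrarily long \( \rho \subfun g \) in \( T \) to ensure the inductive step succeeds---so there is no meaningful divergence in strategy.
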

\begin{proof}
   Monotonicity guarantees that if \( l_{i+1} > l_i \) then \( \Theta(g\restr{l_0}) \subfunneq \Theta(g\restr{l_1})\subfunneq \ldots \).  Thus \( \Theta(g)\conv \)  iff \( \existsinf(l)\Theta(g\restr{l})\conv \) iff \( g \in [\dom \Theta] \) iff \( \Theta(g) \in [\rng \Theta] \).  Monotonicity ensures that every real in \( [\rng \Theta] \) is the image of a unique real in \( [\dom \Theta] \) and from this it is easy to verify \( \Theta \) is bicontinuous and therefore a homeomorphism of \( [\dom \Theta] \)  with  \( [\rng \Theta] \). 

    However, since \( \rng \Theta \) may fail to be downward closed \textemdash recall that \( [\rng \Theta] \) is the set of paths which extend infinitely many strings in \( \rng \Theta \) \textemdash this doesn't guarantee that \( \Theta \) is a homeomorphism of its domain with the downward closure of it's range \textemdash  the standard map \( \zeta \)   that sends \( \baire \) to  the coinfinite sets in \( \cantor  \), will be monotone while the downward closure of \( \rng \zeta \) will be the full binary tree.  However, if \( \Theta \) is expansionary then  \( [\rng \Theta]=[T] \).
\end{proof}

  In light of this lemma, we will elide the distinction between monotone functions on strings and the induced functionals as the notation we've adopted for \( \recfnl{e}{\sigma}{} \)  already interprets computable functionals as monotone functions on strings.

\begin{definition}\label{def:subgeneric}
 A degree \( \mathbf{g}  \)  is \( \alpha \)-subgeneric for \( \alpha < \wck \) if for all \( \beta' \Oless \beta \Oleq \alpha \) 
 \begin{flalign}
  &\jumpn{\mathbf{g}}{\beta} = \zeron{\beta} \Tjoin  \mathbf{g} \label{eq:subgeneric:jump} \\
  &\jumpn{\mathbf{g}}{\beta'} \Tmeet \zeron{\beta} = \zeron{\beta'} \label{eq:subgeneric:meet} \\
  &\mathbf{g} \nTleq \zeron{\beta'} \label{eq:subgeneric:non-trivial} 
  \end{flalign}
  \( \mathbf{g} \) is \( \wck \)-subgeneric, or simply subgeneric, if the above holds for all \( \alpha < \wck \). 
      
\end{definition}

We engage in the obvious abuse of notation and talk about a real \( f \)  being \( \alpha \)-subgeneric for an ordinal notation \( \alpha \) when the degree of \( f \) is \( \Ohgt{\alpha} \)-subgeneric.  It will be useful to talk about classes whose members uniformly satisfy the definition of \( \alpha \)-subgeneric and whose \( \alpha \) jumps are incomparable \textemdash we explicate this notion below.

\begin{definition}\label{def:uniformly-subgeneric}
A class \( \mathcal{C} \) of reals is uniformly  \( \alpha \)-subgeneric  for \( \alpha \in \kleeneO \)  if there is a computable functional \( \Gamma \) such that  
\begin{enumerate}
    \item\label{def:uniformly-subgeneric:subg}  Every \( g \in \mathcal{C} \) is \( \alpha \)-subgeneric 
    \item\label{def:uniformly-subgeneric:uniform}  For all \( g\in \mathcal{C} \) and \(  \beta \Oleq \alpha \) we have \( \Gamma(\zeron{\beta};g) = \jumpn{g}{\beta} \).
    \item\label{def:uniformly-subgeneric:incomp}  If \( f, g \in \mathcal{C} \), \( \beta \Oless \alpha \)  and \( f \neq g \) then \( f \nTleq \jumpn{g}{\beta}  \).   
\end{enumerate}      
The class is uniformly \( \wck \)-subgeneric \textemdash or just uniformly subgeneric \textemdash  if the above holds for all \( \alpha  \) in some path \( \kleeneO1 \) through \( \kleeneO \).    
\end{definition}

We will say a tree \( T \) is uniformly \( \alpha \)-subgeneric if \( [T] \) is a uniformly \( \alpha \)-subgeneric class.

\section{Main Theorems}

We can now state the two primary theorems.

\begin{theorem}[Main Theorem - Standard Version]\label{thm:standard}
For all \( \alpha \in \kleeneO \) and tree \( T \Tleq \zeron{\alpha}  \) there is a computable tree \( T_0 \subset \wstrs \) such that \( [T_0] \) is a uniformly \( \alpha \)-subgeneric class and there is a \( \zeron{\alpha} \) computable expansionary function \( \Gamma\maps{T}{T_0} \) that gives a homeomorphism of \( [T] \) and \( [T_0] \).  This holds with all possible uniformity \textemdash including uniformity in \( \alpha \) and \( T \).   
\end{theorem}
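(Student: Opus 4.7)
The plan is to proceed by effective transfinite induction on $\alpha \in \kleeneO$, building the computable tree $T_0$ and the $\zeron{\alpha}$-computable expansionary map $\Gamma\maps{T}{T_0}$ uniformly in $\alpha$ and the $\zeron{\alpha}$-index for $T$. The base case $\alpha = 0$ is trivial: $T$ is already computable, so we take $T_0 = T$ and $\Gamma = \id$.

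The core of tree pulldown is to replace each $\sigma \in T$ by a string $\Gamma(\sigma) \in T_0$ whose extra coordinates encode the stage-by-stage history of a $\zeron{\alpha}$-approximation to the statement ``$\sigma \in T$''. A node of $T_0$ is placed (and made extendible) whenever its target $\sigma$ enters the current approximation to $T$, extended each time the approximation re-confirms $\sigma$, and pruned (made non-extendible) once the approximation abandons $\sigma$. Membership in $T_0$ at any fixed level depends on only finitely many stages of approximation, so $T_0$ is computable; expansionarity is enforced by arranging $\Gamma(\sigma\concat\str{x}) \supfun \Gamma(\sigma)\concat\str{x'}$ for a fresh coordinate $x'$ depending on $x$ and on the history, after which Lemma \ref{lem:expan-homeo} supplies the desired homeomorphism between $[T]$ and $[T_0]$.

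To obtain uniform $\alpha$-subgenericity of $[T_0]$ we interleave genericity requirements into the pulldown. For each $\beta \Oless \alpha$ and each index $e$, the paths of $T_0$ must decide the $\beta$-level analogue of ``$\recfnl{e}{g}{}$ is total''. Following Steel's tagged-tree paradigm, each such decision is made at the $\zeron{\beta}$-level as part of the construction of $\Gamma$ \textemdash either by meeting or by strongly avoiding the corresponding $\zeron{\beta}$-dense set of nodes \textemdash and then baked into $T_0$ via the pulldown, so that the computable tree itself reflects the forcing outcome. Incomparability (clause (\ref{def:uniformly-subgeneric:incomp}) of Definition \ref{def:uniformly-subgeneric}) is secured by forcing explicit splittings at every level, so distinct paths differ in ways not recoverable from any lower jump, and the subgenericity equation $\jumpn{g}{\beta} = \zeron{\beta} \Tjoin g$ follows because the initial segment of $\Gamma^{-1}(g)$ that decodes any given $\zeron{\beta+1}$-question about $g$ is itself computable from $\zeron{\beta} \Tjoin g$.

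The main obstacle will be the limit case $\lambda \in \kleeneO-$: the natural $\zeron{\lambda}$-approximation is discontinuous across the hierarchy $\set{\Olim{\lambda}{n}}{n \in \omega}$, so the pulldowns at different sublevels must be coordinated by a tag structure on nodes of $T_0$ that records which sublevel and stage placed each node, together with a priority argument organized along a path through $\kleeneO$ ensuring that the tags stabilize on the true $\zeron{\lambda}$-approximation. The delicate verification is that forcing decisions made below $\lambda$ are not retroactively injured at the limit, so that the $\beta$-jump of any $g \in [T_0]$ continues to equal $\zeron{\beta} \Tjoin g$ for every $\beta \Oless \lambda$; handling this uniformly in $\lambda$, and producing the computable index for $T_0$ uniformly from the $\zeron{\alpha}$-index for $T$, is where the bulk of the technical effort will lie.
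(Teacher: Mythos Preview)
Your proposal has the right overall shape but misses the central structural idea, and as written the plan would not go through.

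The key gap is that you describe a \emph{single} pulldown from the $\zeron{\alpha}$-computable tree $T$ directly to a computable $T_0$, with the extra coordinates of $\Gamma(\sigma)$ encoding ``the stage-by-stage history of a $\zeron{\alpha}$-approximation''. For $\alpha > 1$ there is no such approximation with a computable stagewise structure, so this cannot produce a computable $T_0$ while simultaneously meeting the genericity and splitting requirements at every level $\beta \Oless \alpha$. The paper's construction is not a single pulldown but a \emph{tower of trees}: for every $\beta \Oleq \alpha$ one builds a tree $T_\beta \Tleq \zeron{\beta}$ and expansionary maps $\Gamma^{\gamma}_{\beta}\maps{[T_\gamma]}{[T_\beta]}$, with $T_\alpha = T$ at the top. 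The technical core is a one-jump Pulldown Lemma that, given a nice $T' \Tleq \jump{X}$, produces an $X$-computable $T$ and an expansionary homeomorphism $\Gamma\maps{[T']}{[T]}$ such that paths of $T$ meet or strongly avoid every $X$-r.e.\ set, distinct paths are mutually non-computable over $X$, and (crucially for the meet condition $\jumpn{g}{\beta'} \Tmeet \zeron{\beta} = \zeron{\beta'}$) $\Gamma$ captures $i$-splittings whenever they exist. This last splitting clause is entirely absent from your sketch, and without it you cannot verify \cref{eq:subgeneric:meet}.

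At limits the mechanism is also different from what you describe. There are no tags in the sense of Steel's forcing; the paper explicitly notes that tagged-tree forcing gives only relativized results. Instead one fixes a computable copy-length function $l^{\lambda}_{\beta}$ and arranges $T_{\Olim{\lambda}{n}}\restr{l^{\lambda}_{\Olim{\lambda}{n}}} = T_\lambda\restr{l^{\lambda}_{\Olim{\lambda}{n}}}$ (up to a harmless superset), so that $\Gamma^{\lambda}_{\beta}$ can be defined as the limit of $\Gamma^{\Olim{\lambda}{n}}_{\beta}$ on sufficiently short strings. Making these agreements $\zeron{\beta}$-computable requires a separate approximation lemma, and tying the whole tower together requires the recursion theorem (each $T_\beta$ is defined from a guessed index for $T_{\beta+1}$, with a graceful-failure clause in the Pulldown Lemma to rule out trivial fixed points). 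None of this machinery is hinted at in your plan, and the vague ``priority argument organized along a path through $\kleeneO$'' does not substitute for it.
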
 

We now extend this result to produce a uniformly subgeneric (i.e. \( \wck \)-subgeneric) class.  This extension can be viewed as the result of applying  \cref{thm:standard} to non-standard ordinal notations.  However, moving to non-standard ordinal notations also adds `phantom' paths in \( T_0 \) that aren't images of elements in \(  T \).  This consideration yields the following theorem.

\begin{theorem}[Main Theorem - Non-standard Version]\label{thm:non-standard}
There is a computable tree \( T_0 \subset \wstrs \) such that \( [T_0] \) is uniformly subgeneric and contains a \textit{subset} homeomorphic to \( \baire \) via an expansionary function \( \Gamma\maps{\wstrs}{T_0} \).  
\end{theorem}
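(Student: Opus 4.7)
The plan is to obtain $T_0$ by applying \cref{thm:standard} to a carefully chosen non-standard notation $\alpha^* \in \kleeneO*$ together with the input tree $T = \wstrs$. One first picks $\alpha^*$ so that the well-founded part of its predecessors under $\Oless$ is exactly some $\piin{1}$ path $\kleeneO1$ through $\kleeneO$ — such an $\alpha^*$ exists by a standard overspill argument using the machinery for $\kleeneO*$ reviewed in the appendix. Since $\wstrs$ is trivially $\Tleq \zeron{\alpha^*}$, \cref{thm:standard} then yields a tree $T_0$ and an expansionary function $\Gamma\maps{\wstrs}{T_0}$. The uniformity-in-$\alpha$ clause of \cref{thm:standard} — that an index for $T_0$ is computable from $\alpha^*$ and from an index for the input tree — is what guarantees $T_0$ is a genuinely \emph{computable} tree despite $\alpha^*$ being non-standard. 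By \cref{lem:expan-homeo}, $\Gamma$ then maps $\baire = [\wstrs]$ homeomorphically onto a subset of $[T_0]$, furnishing the desired $\baire$-copy.

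Next one verifies that $[T_0]$ is uniformly subgeneric along $\kleeneO1$. For any genuine $\alpha \in \kleeneO1$ one has $\alpha \Oleq \alpha^*$, so the single jump-witness $\Gamma_J$ supplied by \cref{thm:standard} satisfies $\Gamma_J(\zeron{\beta}; h) = \jumpn{h}{\beta}$ for every $h \in [T_0]$ and every $\beta \Oleq \alpha$, giving the uniformity clause of \cref{def:uniformly-subgeneric}. The meet property \eqref{eq:subgeneric:meet}, non-triviality \eqref{eq:subgeneric:non-trivial}, and pairwise $\beta$-incomparability then descend from the corresponding $\alpha^*$-statements by restricting to genuine $\beta \Oless \alpha$.

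The hard part — the reason this is not an immediate specialisation of \cref{thm:standard} — is that $[T_0]$ will in general contain \emph{phantom} paths not lying in $\rng \Gamma$, arising because the ill-founded portion of $\alpha^*$ permits the construction to attach additional branches beyond the homeomorphic image of $\baire$. One has to check that the subgenericity conditions hold for \emph{every} path in $[T_0]$, not just those in $\rng \Gamma$. This calls for inspecting the proof of \cref{thm:standard} to confirm that the only places where well-foundedness of $\alpha^*$ is actually invoked are recursions that, when truncated at a genuine $\alpha \in \kleeneO1$, apply uniformly to every real in $[T_0]$. Granting that, the theorem follows.
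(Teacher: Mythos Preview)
Your high-level strategy \textemdash run the standard construction on a non-standard notation and extract $T_0$ from the bottom \textemdash is indeed the paper's approach. But your write-up has two genuine gaps, not just loose ends.

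First, you write ``since $\wstrs$ is trivially $\Tleq \zeron{\alpha^*}$'' and then invoke \cref{thm:standard}. But $\zeron{\alpha^*}$ is not defined for an arbitrary $\alpha^* \in \kleeneO*$: the existence of a jump hierarchy on an ill-founded computable linear order is a nontrivial property, and the paper explicitly cites \cite{Friedman1976Uniformly} as refuting the na\"ive expectation that any such order admits one. You cannot simply pick $\alpha^*$ by overspill; you need to \emph{construct} a pseudo-well-ordering that supports a jump hierarchy. The paper does this in \cref{lem:unif-no-hyp-and-jump} via a fixed-point trick: build a computable tree $U$ whose paths are exactly the jump hierarchies on $U^{+}$ under $\KBless$, then observe that $[U] \neq \eset$ (else $U^{+}$ is well-ordered and supports one trivially, contradiction) and $[U] \isect \HYP = \eset$ (no ill-founded hyperarithmetic jump hierarchy exists). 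Without this, the construction you want to invoke has no oracle to run against above the well-founded part, and $\Gamma^{\alpha^*}_0$ is undefined.

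Second, your final paragraph is where the proof actually lives, and you have deferred it entirely. Saying ``this calls for inspecting the proof of \cref{thm:standard} \ldots granting that, the theorem follows'' is not a proof. The paper devotes all of \cref{sec:non-standard-tower} to this: one must verify that the tower axioms of \cref{def:subgeneric-tower} hold for \emph{all} paths through each $T_\vartheta$ on the well-founded initial segment, which works because \hyperref[lem:tree-minus-one]{the pulldown lemma} guarantees its conclusions for every path through the tree it builds, not just images of paths from above. The inductions over $\KBless$ must be justified via \cref{lem:hyp-set-has-min} (\( \piin{1} \) sets have minima in orders without hyperarithmetic descending chains), and one must track carefully which claims \textemdash in particular surjectivity of $\Gamma^{\vartheta}_{\theta}$ in \cref{lem:homeo-almost} \textemdash genuinely fail beyond the well-founded part. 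This is the content of the theorem, not a routine check.
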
 

Obviously, this implies \( [T_0] \) contains a subset homeomorphic via \( \Gamma \)  to any subtree of \( \wstrs \) as well.

\section{Applications}

We now list some of the applications of these theorems.  As with the primary result, these are also all due to Harrington.  We start by providing a a solution to problem 57\( ^{*} \)  of \cite{Friedman1975One}.

\begin{corollary}[Solution to 57\( ^{*} \)]\label{cor:uncountable-not-every-hyperdegree}
There is a computable tree \( T \subset \wstrs \) with \( [T] \) uncountable such that for all \( f \in [T], \lambda \in \kleeneO \) we have \( \jumpn{f}{\lambda} \nTgeq \kleeneO \).  Thus, \( T \) fails to have any path of the same \( \lambda \) degree (when defined) as \( \kleeneO \).    
\end{corollary}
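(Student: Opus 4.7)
The plan is to invoke \cref{thm:non-standard} directly and take $T$ to be the tree $T_0$ it produces. The theorem gives a computable $T$ with $[T]$ uniformly subgeneric and containing a subset homeomorphic to $\baire$; this embedded copy of $\baire$ makes $[T]$ of size continuum, so the uncountability requirement is immediate. The remaining task is to verify that no $f \in [T]$ has any $\lambda$-jump computing $\kleeneO$.

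Fix $f \in [T]$ and $\lambda \in \kleeneO$, and suppose for contradiction that $\jumpn{f}{\lambda} \Tgeq \kleeneO$. Let $\kleeneO1$ be the path witnessing uniform subgenericity of $[T]$ in the sense of \cref{def:uniformly-subgeneric}. The initial segment of $\kleeneO$ below any notation $a$ contains a notation for every ordinal $\leq |a|$, so because $\kleeneO1$ is $\Oless$-downward closed and cofinal in $\wck$, it contains a notation $\lambda^*$ of height $\Ohgt{\lambda}$. By Spector's uniqueness theorem, the Turing degree of $\jumpn{f}{a}$ depends only on $|a|$, so $\jumpn{f}{\lambda^*} \Tequiv \jumpn{f}{\lambda} \Tgeq \kleeneO$. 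Choose any $\alpha \in \kleeneO1$ with $\lambda^* \Oless \alpha$.

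The meet property \cref{eq:subgeneric:meet}, applied with $\beta' = \lambda^*$ and $\beta = \alpha$, gives $\jumpn{f}{\lambda^*} \Tmeet \zeron{\alpha} = \zeron{\lambda^*}$. But $\zeron{\alpha} \Tleq \kleeneO \Tleq \jumpn{f}{\lambda^*}$ (since $\kleeneO$ computes every hyperarithmetic set) and $\zeron{\alpha} \Tleq \zeron{\alpha}$ trivially, so $\zeron{\alpha}$ is a common lower bound of $\jumpn{f}{\lambda^*}$ and $\zeron{\alpha}$; the meet identity forces $\zeron{\alpha} \Tleq \zeron{\lambda^*}$, contradicting $\Ohgt{\lambda^*} < \Ohgt{\alpha}$.

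Since \cref{thm:non-standard} packages the entire construction, the only real subtlety is bridging from an arbitrary notation $\lambda \in \kleeneO$ to the notation $\lambda^*$ on the distinguished path $\kleeneO1$ along which subgenericity is guaranteed. This is handled by Spector's uniqueness theorem combined with the fact that any path through $\kleeneO$ covers every ordinal below $\wck$. I anticipate no further technical obstacle.
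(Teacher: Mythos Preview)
Your proposal is correct and follows essentially the same route as the paper: take $T = T_0$ from \cref{thm:non-standard}, then derive a contradiction from $\jumpn{f}{\lambda} \Tgeq \kleeneO$ via the meet condition \cref{eq:subgeneric:meet} together with $\zeron{\alpha} \Tleq \kleeneO$. The paper's version is terser: it simply takes $\beta = \lambda \Oadd 1$, $\beta' = \lambda$ and observes $\zeron{\lambda \Oadd 1} \Tleq \kleeneO \Tleq \jumpn{f}{\lambda}$ forces $\zeron{\lambda \Oadd 1} \Tleq \zeron{\lambda}$. Your detour through Spector's theorem to replace $\lambda$ by some $\lambda^{*} \in \kleeneO1$ is unnecessary \textemdash subgenericity in \cref{def:subgeneric} is a degree-theoretic (hence ordinal-level) property, so once $f$ is $\wck$-subgeneric the meet condition applies to any notations with the right ordinal heights \textemdash but it does no harm and arguably makes the dependence on the witnessing path more explicit.
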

Here \( \lambda \) degrees refers to the degrees formed by the notion of \( \lambda \) reducibility defined above.  
\begin{proof}
Take \( T \subset \wstrs \) to satisfy \( [T] \) is uniformly subgeneric as guaranteed by \cref{thm:non-standard}.  If  \( f \in [T] \) satisfied \( \jumpn{f}{\lambda} \Tgeq  \kleeneO  \)   then \textemdash as \( \kleeneO   \Tgeq \zeron{\lambda \Oadd 1} \) \textemdash  \( \zeron{\lambda + 1} \Tleq \jumpn{f}{\lambda} \).   As \( f \) is \( \lambda + 1 \)-subgeneric, we would have \( \zeron{\lambda + 1} \Tleq \zeron{\lambda} \).  Contradiction.        
\end{proof}

Another corollary solves problem 62 in \cite{Friedman1975One}. 

\begin{corollary}[Solution to 62]\label{cor:H-set-non-arith}
There is a \( \pizn{1} \) function singleton \( g \) such that \( g \nAequiv \zeron{\beta} \) for any \( \beta < \wck \).      
\end{corollary}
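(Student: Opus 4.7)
The plan is to produce $g$ as the unique path of a computable tree $T_0$ built by running the non-standard tree-pulldown construction (underlying \cref{thm:non-standard}) on a single-branch input $T \subseteq \wstrs$ with $[T] = \{f\}$ \textemdash e.g., $f \in \baire$ the constantly zero real and $T = \{\str{0}^{k} : k \in \omega\}$. The standard-notation analog \cref{thm:standard}, applied to this computable $T$ for any fixed $\alpha \in \kleeneO$, already produces a computable $T_0$ with $[T_0] = \{\Gamma(f)\}$, making $g = \Gamma(f)$ automatically a $\pizn{1}$ function singleton; but this $g$ is only $\alpha$-subgeneric, not subgeneric. Running the same machinery along a non-standard notation, as in the proof of \cref{thm:non-standard}, should instead deliver a computable $T_0$ whose unique legitimate path $g$ is subgeneric.

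Once such a subgeneric $\pizn{1}$ function singleton $g$ is in hand, the corollary is immediate. Suppose for contradiction that $g \Aequiv \zeron{\beta}$ for some $\beta < \wck$. Then $g$ is arithmetic in $\zeron{\beta}$, so $g \Tleq \zeron{\gamma}$ where $\gamma = \beta \Oadd k$ for some $k \in \omega$. But subgenericity gives $g \nTleq \zeron{\gamma}$ by clause \eqref{eq:subgeneric:non-trivial} of $\alpha$-subgenericity, applied with any $\alpha \in \kleeneO$ satisfying $\gamma \Oless \alpha$ (e.g., $\alpha = \gamma \Oadd 1$). Contradiction.

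The main obstacle \textemdash and the reason this does not fall directly out of \cref{thm:non-standard} as stated \textemdash is controlling the phantom paths introduced by non-standard notations. Such notations can add paths to $[T_0]$ that are not images of any path of $[T]$ under $\Gamma$, so a priori $[T_0] \supsetneq \{\Gamma(f)\}$, destroying the $\pizn{1}$ singleton property. \cref{thm:non-standard} tolerates this because all of $[T_0]$ remains uniformly subgeneric, but here we genuinely need $[T_0] = \{g\}$. I expect the bulk of the proof to consist of verifying that when the input tree is a single branch, the pulldown can be arranged \textemdash by pruning the branching in $T_0$ to track the unique branch of $T$ \textemdash so that the ill-founded portion of the non-standard notation fails to seed any spurious paths.
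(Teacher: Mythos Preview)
Your proposal has a genuine gap: you never actually eliminate the phantom paths, you merely ``expect'' that when the input tree has a single branch the ill-founded portion of the non-standard notation can be prevented from seeding spurious paths. This is not obvious and is not proved anywhere in the paper; indeed, the paper explicitly flags that in the non-standard construction extra paths \emph{do} arise (see the discussion after \cref{lem:homeo-almost} about paths that ``come from infinity''), and nothing in that discussion suggests they disappear when the top tree has a unique branch. So as it stands your argument does not establish that $g$ is a $\pizn{1}$ singleton.

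More importantly, the detour through \cref{thm:non-standard} is unnecessary. The paper's proof uses only the \emph{standard} version, \cref{thm:standard}, with $\alpha = \omega$ and the single-branch tree $T = \{\str{0}^k : k \in \omega\}$. This gives a computable $T_0$ with $[T_0]$ a singleton $\{g\}$ (no phantom paths, since $\alpha \in \kleeneO$) and $g$ is $\omega$-subgeneric. The key observation you are missing is that $\omega$-subgenericity already suffices, provided one uses \emph{both} directions of $\Aequiv$: if $g \Aequiv \zeron{\gamma}$ then $g \Tleq \zeron{\gamma + n}$ for some $n$, so \cref{eq:subgeneric:non-trivial} forces $\gamma + n \geq \omega$ and hence $\gamma \geq \omega$; but then $\zeron{\omega} \Tleq \zeron{\gamma} \Tleq \jumpn{g}{m}$ for some $m$, and \cref{eq:subgeneric:meet} says anything computable from both $\jumpn{g}{m}$ and $\zeron{\omega}$ is computable from $\zeron{m}$, contradicting $\zeron{\omega} \nTleq \zeron{m}$. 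You only used the forward direction $g \Tleq \zeron{\gamma + n}$, which is why you felt forced to reach for full $\wck$-subgenericity.
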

Here \( \Aequiv \) denotes arithmetic equivalence, i.e., the equivalence relation induced by the relation of relative arithmetic definability. 
\begin{proof}
Applying \cref{thm:standard} with \( \alpha = \omega \) and \( T = \set{\str{0}^{k}}{k \in \omega} \)  gives us a \( \pizn{1} \) function singleton \( g \)  that is an \(  \omega \)-subgeneric.  Now suppose, for a contradiction, that \( g \Aequiv \zeron{\gamma} \) for some \( \gamma < \wck \) then we must have \( g \Tleq \jumpn{\zeron{\gamma}}{n}  \) for some \( n \in \omega \).  Thus, we must have \( \gamma + n \geq \omega \) by \cref{eq:subgeneric:non-trivial} and thus \( \gamma \geq \omega \).  Hence \( \zeron{\omega} \Tleq \zeron{\gamma}  \Tleq \jumpn{g}{n} \).  But by  \cref{eq:subgeneric:meet} any set arithmetic in \( g \) is actually arithmetic contradicting the fact that \( \zeron{\omega} \) isn't arithmetic.
\end{proof}

The next two corollaries have previously appeared in print in \cite{Simpson2016Implicit}.  

\begin{corollary}[Arithmetically Incomparable Singletons]
There is a pair of \( \pizn{1} \) arithmetically incomparable function singletons \( f, g \) with \( \jumpn{f}{\omega} = \jumpn{g}{\omega} = \zeron{\omega}  \).  Equivalently there is a pair of arithmetically incomparable and arithmetically low \( \pizn{2} \) set singletons . 
\end{corollary}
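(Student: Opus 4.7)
The plan is to apply \cref{thm:standard} with $\alpha = \omega$ to a computable tree $T$ whose path-space $[T]$ consists of exactly two (computable) elements, and take the two resulting paths of $[T_0]$ as the desired singletons. For concreteness take $T$ with $[T] = \{p_0, p_1\}$, where $p_0, p_1$ are two distinct computable reals that diverge at position $0$ (e.g.\ $p_0(i) = 0$ for all $i$, and $p_1(0) = 1$, $p_1(i) = 0$ for $i \geq 1$). The theorem supplies a computable tree $T_0 \subset \wstrs$, a uniformly $\omega$-subgeneric class $[T_0]$, and an expansionary $\zeron{\omega}$-computable $\Gamma\maps{T}{T_0}$ inducing a homeomorphism $[T] \to [T_0]$. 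Set $f := \Gamma(p_0)$ and $g := \Gamma(p_1)$, so $[T_0] = \{f, g\}$.

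Arithmetic incomparability of $f$ and $g$ follows immediately from clause~\ref{def:uniformly-subgeneric:incomp} of \cref{def:uniformly-subgeneric}: for every finite $\beta$, $f \nTleq \jumpn{g}{\beta}$ and symmetrically $g \nTleq \jumpn{f}{\beta}$, so neither is arithmetic in the other. Arithmetic lowness $\jumpn{f}{\omega} \Tequiv \jumpn{g}{\omega} \Tequiv \zeron{\omega}$ follows exactly as in the proof of \cref{cor:H-set-non-arith}: \cref{eq:subgeneric:jump} gives $\jumpn{f}{\omega} \Tequiv \zeron{\omega} \Tplus f$, and since $\Gamma$ is $\zeron{\omega}$-computable and applied to the computable input $p_0$ we have $f \Tleq \zeron{\omega}$, hence $\jumpn{f}{\omega} \Tequiv \zeron{\omega}$; symmetrically for $g$.

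The delicate point is exhibiting $\{f\}$ and $\{g\}$ as individual $\pizn{1}$ classes in $\baire$. Since $\Gamma$ is expansionary and the two paths of $T$ split already at position $0$, the images $\Gamma(\str{0})$ and $\Gamma(\str{1})$ are incompatible strings in $T_0$ with $f \supfun \Gamma(\str{0})$ and $g \supfun \Gamma(\str{1})$. Using the uniformity in $T$ guaranteed by \cref{thm:standard} together with the computability of $T$ itself, one should be able to extract these initial values $\Gamma(\str{0})$ and $\Gamma(\str{1})$ computably. Granting this, the tree $T_0^f := \set{\sigma \in T_0}{\sigma \compat \Gamma(\str{0})} \cup \set{\tau}{\tau \subfun \Gamma(\str{0})}$ is a computable subtree of $T_0$ whose only infinite path is $f$, witnessing $\{f\}$ as $\pizn{1}$; the argument for $g$ is symmetric.

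The main obstacle is precisely this last step, namely extracting the initial values $\Gamma(\str{0}), \Gamma(\str{1})$ computably. I expect this to follow from inspecting the construction underlying \cref{thm:standard} and observing that its first few stages are purely finitary; the remaining properties fall out of the theorem essentially by inspection.
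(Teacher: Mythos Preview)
Your overall approach is exactly the paper's: apply \cref{thm:standard} at $\alpha=\omega$ to a two-path computable tree, and read off incomparability and arithmetic lowness from uniform $\omega$-subgenericity together with $f,g \Tleq \zeron{\omega}$.

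The only place you diverge from the paper is the step you yourself flag as the obstacle, and there you are making life harder than necessary. You do not need to compute $\Gamma(\str{0})$ or $\Gamma(\str{1})$ at all. The paper's argument is purely topological: since $[T_0]$ has exactly two elements, each of $f,g$ is an isolated point of $[T_0]$. So there exists some finite $\sigma \subfun f$ with $f$ the unique path through $T_0$ extending $\sigma$. Now hardcode that particular $\sigma$: the tree $\set{\tau \in T_0}{\tau \compat \sigma}$ is computable (because $T_0$ is computable and $\sigma$ is a fixed finite string) and has $f$ as its sole path. No uniformity or effectiveness in locating $\sigma$ is required --- the statement only asks for the \emph{existence} of a $\pizn{1}$ description, not for a procedure to produce one from the data. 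Once you drop the attempt to extract $\Gamma(\str{0})$ computably, your proof and the paper's are identical.
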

\begin{proof}
A straightforward argument shows that \( f \in \baire \) is the unique solution of some \( \pizn{1}  \) formula iff it's graph is the unique solution of some  \( \pizn{2}  \) formula \textemdash the extra quantifier is needed to assert that \( X \) codes a total function but see \cite{Gerdes2023PiSingleton} for formal proof (using a slightly different coding).  Thus, it is enough to demonstrate the main claim.

Let \( T \) be a computable tree without terminal nodes that has exactly two paths.  By \cref{thm:standard} we can find a computable tree \( T_0 \) such that \( [T_0] \) is homeomorphic to \( [T] \) and is uniformly \( \omega \)-subgeneric.   Let \( f, g \) such that \( [T_0] = \set{f,g} \).  As both \( f \) and \( g \) are isolated paths on \( T_0 \) they are both the unique path through some computable tree \textemdash hence  \( \pizn{1} \) function singletons.  The uniform subgenericity of \( [T_0] \) guarantees that \( f \) and \( g \) are arithmetically incomparable and since they are both the image of computable paths under a \( \zeron{\omega} \) computable functional we have that \( f, g \Tleq \zeron{\omega}  \).  As \( \jumpn{f}{\omega} \Tequiv \zeron{\omega} \Tplus f  \) \textemdash and likewise for \( g \) \textemdash we have \( \jumpn{f}{\omega} \Tequiv \jumpn{g}{\omega} \Tequiv \zeron{\omega} \).              
\end{proof}

The same approach can be adapted to construct arithmetically incomparable \( \REA[\omega] \) sets \textemdash a result first established by one of us (Harrington) but only distributed as unpublished notes \cite{Harrington1976Arithmetically}.  The first published version of the construction seems to be \cite{Simpson1985Arithmetic} but a more accessible version of the result can be found in \cite{Odifreddi1999Classical} (theorem XIII.3.5).  The next corollary gives a negative answer to McLaughlin's conjecture (problem 63 of \cite{Friedman1975One}).

\begin{corollary}[Solution to 63]\label{cor:pi-class-no-arith-singleton}
There is a countable \( \pizn{1} \) subset of \( \baire \) containing an element which isn't an arithmetic singleton.
\end{corollary}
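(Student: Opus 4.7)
The plan is to apply \cref{thm:standard} with $\alpha$ a standard notation for $\omega$ and with $T \subset \wstrs$ the computable tree with path set $[T] = \{0^\omega\} \cup \set{\str{0}^k \concat \str{1} \concat 0^\omega}{k \in \omega}$ --- a convergent-sequence tree whose unique non-isolated path $p = 0^\omega$ is approached by the isolated paths $p_k = \str{0}^k \concat \str{1} \concat 0^\omega$.  The theorem yields a computable tree $T_0 \subset \wstrs$, a $\zeron{\omega}$-computable expansionary function $\Gamma\maps{T}{T_0}$ realizing a homeomorphism of $[T]$ with $[T_0]$, and --- by the uniform $\omega$-subgenericity of $[T_0]$ --- a fixed computable functional $\Psi$ with $\Psi(\zeron{n}; g) = \jumpn{g}{n}$ for every $g \in [T_0]$ and every $n \leq \omega$.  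Letting $f = \Gamma(p)$ and $f_k = \Gamma(p_k)$, we have $[T_0] = \{f\} \cup \set{f_k}{k \in \omega}$, a countable $\pizn{1}$ subset of $\baire$ with $f_k \to f$.

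I claim that $f$ is not an arithmetic singleton, which establishes the corollary.  Suppose for contradiction that $\{f\}$ is defined by a $\pizn{n}$ formula $\phi(g)$, so $\phi(g) \iff g = f$.  By the standard coding of $\pizn{n}$ questions in the Turing jump, there is an index $e$ such that $\phi(g) \iff \jumpn{g}{n}(e) = 1$ for every $g$; by uniform subgenericity this is equivalent, for $g \in [T_0]$, to $\Psi(\zeron{n}; g)(e) = 1$.  Since $\phi(f)$ holds, the computation of $\Psi(\zeron{n}; f)(e)$ halts with output $1$ and queries $f$ only at finitely many inputs --- say only at inputs below some $N$.  Because $f_k \to f$ in $\baire$, for all sufficiently large $k$ we have $f_k \restr N = f \restr N$, whence the same oracle bits are queried and $\Psi(\zeron{n}; f_k)(e) = 1$, giving $\phi(f_k)$.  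This contradicts the assumed uniqueness of $f$ as the satisfier of $\phi$.

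The main obstacle is extracting, from the bare assumption that $\{f\}$ is $\pizn{n}$, a use-bounded computation whose trace transfers across the convergent sequence $f_k \to f$.  The essential ingredient is clause~(\ref{def:uniformly-subgeneric:uniform}) of uniform subgenericity: a single functional $\Psi$ computes the $n$-th jump from $\zeron{n} \Tplus g$ identically for every $g \in [T_0]$.  Without this uniformity the use bound obtained at $f$ would not be available for the $f_k$, and the continuity argument above would fail; with it, the argument reduces the problem to the trivial observation that a halting computation has finite use.
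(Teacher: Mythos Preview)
Your proof is correct and follows essentially the same approach as the paper: both apply \cref{thm:standard} with $\alpha=\omega$ to a countable tree with a non-isolated path, then use the uniform functional from clause~(\ref{def:uniformly-subgeneric:uniform}) of \cref{def:uniformly-subgeneric} to argue that any arithmetic predicate satisfied by the non-isolated path has a finite-use witness and hence is also satisfied by nearby paths. The paper packages this finite-use continuity argument as a separate ``forcing'' lemma (\cref{lem:subgeneric-forcing}) before applying it, whereas you inline the argument directly; the content is the same.
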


Proving this result requires we establish a few lemmas first.  We start by showing that we can define something like a notion of forcing for uniformly subgeneric classes of reals.  

\begin{lemma}\label{lem:subgeneric-forcing}
If \(g \in  \mathcal{C} \) where \( \mathcal{C} \)  is a uniformly \( \alpha \)-subgeneric  class and  \( \psi \in \sigmazn{\beta} \union \pizn{\beta},  \beta \Oleq \alpha \) there is some \( \sigma \subfun g \) such that either all \( h \in \mathcal{C}, h \supfun \sigma \) satisfy \( \psi \) or no \( h \) does.  Moreover, such a \( \sigma \) can be found computably in \( \psi, \zeron{\beta} \Tplus g \).         
\end{lemma}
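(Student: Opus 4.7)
The plan is to reduce the claim to the continuity of the uniform functional \(\Gamma\) supplied by condition~(\ref{def:uniformly-subgeneric:uniform}) of \cref{def:uniformly-subgeneric}.  Since \(\jumpn{h}{\beta}\) is uniformly \(\sigmazn{\beta}(h)\)-complete via the standard encoding, any \(\psi \in \sigmazn{\beta} \union \pizn{\beta}\) can be rewritten, computably in an index for \(\psi\), in the form \(\psi(h) \iff \recfnl{e}{\jumpn{h}{\beta}}{0} = 1\) with the understanding that the computation converges to some value in \(\{0,1\}\) whenever \(\jumpn{h}{\beta}\) is total.  Substituting \(\jumpn{h}{\beta} = \Gamma(\zeron{\beta}; h)\) from (\ref{def:uniformly-subgeneric:uniform}), the question of whether \(\psi(h)\) holds for \(h \in \mathcal{C}\) becomes whether \(\recfnl{e}{\Gamma(\zeron{\beta}; h)}{0} = 1\).

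Because \(g \in \mathcal{C}\), the computation \(\recfnl{e}{\Gamma(\zeron{\beta}; g)}{0}\) converges, so \textemdash dovetailing in \(\psi\) and \(\zeron{\beta} \Tplus g\) \textemdash one can effectively locate a least stage \(s\) at which \(\recfnl[s]{e}{\Gamma[s](\zeron{\beta}; g\restr{s})}{0}\conv\) with some value \(i \in \{0,1\}\); set \(\sigma = g\restr{s}\).  For any \(h \in \mathcal{C}\) with \(h \supfun \sigma\) we have \(h\restr{s} = \sigma = g\restr{s}\), so use-monotonicity of \(\Gamma\) gives \(\Gamma[s](\zeron{\beta}; h\restr{s}) = \Gamma[s](\zeron{\beta}; \sigma)\) and hence \(\recfnl[s]{e}{\Gamma[s](\zeron{\beta}; h\restr{s})}{0}\conv = i\) as well.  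Monotonicity of the full \(\Gamma\) then ensures that \(\jumpn{h}{\beta} = \Gamma(\zeron{\beta}; h)\) extends this finite oracle segment, so \(\recfnl{e}{\jumpn{h}{\beta}}{0} = i\) and \(\psi(h)\) has the same truth value as \(\psi(g)\).  Thus \(\sigma\) uniformly decides \(\psi\) on \(\mathcal{C}\cap [\sigma]\), and by construction \(\sigma\) was found computably in \(\psi, \zeron{\beta} \Tplus g\).

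The main subtlety is the one-time setup of representing \(\psi \in \sigmazn{\beta} \union \pizn{\beta}\) as a single-bit query to \(\jumpn{\cdot}{\beta}\) uniformly in an index for \(\psi\).  This is routine bookkeeping with the standard completeness properties of the iterated jump hierarchy \textemdash somewhat awkward at limit levels but nothing essential \textemdash and once it is in place the continuity of \(\Gamma\) does all the real work, the remainder being the classical continuity-of-forcing argument transported along \(\Gamma\).
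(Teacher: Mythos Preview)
Your proof is correct and follows essentially the same approach as the paper: both reduce deciding \(\psi\) to a single query against \(\jumpn{h}{\beta}\), substitute \(\Gamma(\zeron{\beta};h)\) for \(\jumpn{h}{\beta}\) via condition~(\ref{def:uniformly-subgeneric:uniform}), and then invoke the use principle to find an initial segment \(\sigma \subfun g\) that pins down the answer for every \(h \in \mathcal{C}\) extending \(\sigma\). Your write-up is simply more explicit about the stagewise convergence and the monotonicity step than the paper's terse version.
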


See \cite{Ash2000Computable} for a definition of \( \sigmazn{\beta} \) and \( \pizn{\beta} \) formula in terms of computably infinitary formulas.  Also note that the above result implies that if \( \mathcal{C} \) is uniformly subgeneric then all hyperarithmetic facts are `forced' for paths in  \( \mathcal{C} \) with the uniformity claim holding provided \( \beta \) is in the unique path through \( \kleeneO \) witnessing \cref{def:uniformly-subgeneric}.

\begin{proof}
Given \( \psi \in \sigmazn{\beta} \union \pizn{\beta}  \) we can decide if if \( g \models \psi \) computably in \( \jumpn{g}{\beta} \).  Using the computable functional \( \Theta \) from \cref{def:uniformly-subgeneric} we can construct a computable function \( \rho \) such that \( \rho(g \Tplus \zeron{\beta}) \) determines if \( g \models \psi \) for all \( g \in \mathcal{C} \).  Any string \( \sigma \subfun g \) longer than the use of \( \rho \) then witnesses the claim and the moreover claim follows from the computability of \( \rho \).  
\end{proof}

We can now prove the existence of a countable \( \pizn{1} \) class whose elements aren't all arithmetic singletons.

\begin{proof}
Let \( T \) be a computable tree such that \( [T] \) is countable and contains a non-isolated branch, e.g., take \( T = \set{\sigma}{\lnot \exists(x)\exists(y \neq x)\left(\sigma(x) = \sigma(y) = 1 \right)} \).  By \cref{thm:standard} there is a computable tree \( T_0 \) such that \( [T_0] \) is uniformly \( \omega \)-subgeneric and homeomorphic to \( [T] \).  Let \( g  \) be a non-isolated element in \( [T_0] \).   

Given an arithmetic formula \( \psi \) such that \( g \models \psi \) let \( \sigma \subfun g \) be the string witnessing \cref{lem:subgeneric-forcing}.  As \( g \) is non-isolated there is some \( f \in [T_0]  \) with \( f \supfun \sigma \).  Thus, \( f \models \psi \) demonstrating that \( g \) can't be the unique solution of any arithmetic formula.    
\end{proof}

So far all of the applications have used the standard version of the main theorem.  The final two applications require the non-standard version.

\begin{corollary}[Solution to 65]\label{cor:L-hyp-not-deltaii}
There is an \( \omega \)-model \( M \)  of \( \sigmain*{1}\)-DC  such that \( M \not\models X \in \deltain{1} \implies X \in \HYP  \)  
\end{corollary}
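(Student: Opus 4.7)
The plan is to construct the required $\omega$-model $M$ from a uniformly subgeneric real produced by \cref{thm:non-standard}. Let $T_0$ be the tree from that theorem, $\kleeneO1$ the path through $\kleeneO*$ witnessing uniform subgenericity of $[T_0]$, and fix any $g \in [T_0]$ (for definiteness, one in the image of the expansionary $\Gamma$ from \cref{thm:non-standard}). Because uniform subgenericity is demanded at every $\alpha \in \kleeneO1$, not merely for $\alpha \Oless \wck$, the path $\kleeneO1$ must contain non-standard notations; fix one such $\alpha^{*} \in \kleeneO1 \setminus \kleeneO$. Set
\[
M = \{X \subseteq \omega : (\exists \alpha \in \kleeneO1)\, X \Tleq \jumpn{g}{\alpha}\}.
\]

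First I would check the easy structural properties: $M$ contains $g$, is closed under Turing reducibility and finite join (using that $\kleeneO1$ is effectively closed under $\Oadd$), and hence is an $\omega$-model. The substantive work is verifying $M \models \sigmain*{1}$-DC. For this I would extend \cref{lem:subgeneric-forcing} from finite-level $\sigmazn{\beta}/\pizn{\beta}$ formulas to computably infinitary $\sigmain*{1}$ formulas with parameters in $M$: given an instance $\forall X \exists Y\, \phi(X, Y)$ of DC with $\phi \in \sigmain*{1}$, each already-chosen $X_n \in M$ satisfies $X_n \Tleq \jumpn{g}{\alpha_n}$ for some $\alpha_n \in \kleeneO1$, and a witness $X_{n+1}$ for $\exists Y\, \phi(X_n, Y)$ can be produced using the functional from \cref{def:uniformly-subgeneric} together with a suitably large $\beta_n \in \kleeneO1$. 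The full sequence $\langle X_n \rangle_{n \in \omega}$ is then computable from $\jumpn{g}{\gamma}$ for any single $\gamma \in \kleeneO1$ majorising the $\beta_n$, and so lies in $M$.

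To exhibit the failure of $\deltain{1} \subseteq \HYP$ inside $M$, I take $\zeron{\alpha^{*}}$, which belongs to $M$ by construction. Inside $M$ the notation $\alpha^{*}$ is believed to be a genuine element of $\kleeneO$: any real in $M$ coding a descending sequence through $\alpha^{*}$ would have to be Turing-reducible to some $\jumpn{g}{\beta}$ with $\beta \in \kleeneO1$, and the meet and non-triviality clauses \cref{eq:subgeneric:meet}, \cref{eq:subgeneric:non-trivial} of subgenericity (read along $\kleeneO1$) are incompatible with such a reduction --- such a descending sequence would let a low $g$-jump compute information forbidden by subgenericity. Hence $M$ views $\zeron{\alpha^{*}}$ as the $\alpha^{*}$-th iterate of the jump along a well-ordering, i.e.\ as a member of $\deltain{1}$; from the outside, however, $\alpha^{*}$ is genuinely ill-founded, so $\zeron{\alpha^{*}} \notin \HYP$.

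The most delicate step is verifying that $M$ contains no infinite descending sequence through $\alpha^{*}$ --- this is precisely the genericity conclusion that subgenericity along a non-standard path is engineered to supply, but formalizing it requires careful control over how $\zeron{\beta}$ and $\jumpn{g}{\beta}$ are defined at non-standard $\beta \in \kleeneO1$ and some bookkeeping to see that a descending sequence through $\alpha^{*}$ would violate subgenericity at the stages it passes through. The $\sigmain*{1}$-DC verification is more routine but depends on the same cofinality/absorption properties of the path $\kleeneO1$ together with the uniformity supplied by \cref{def:uniformly-subgeneric}.
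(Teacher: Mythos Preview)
There is a genuine gap rooted in a misreading of \cref{def:uniformly-subgeneric}. The path \( \kleeneO1 \) witnessing uniform subgenericity is defined to be a path \emph{through} \( \kleeneO \); by the paper's conventions this means a linearly ordered, downward closed subset \emph{of} \( \kleeneO \) of height \( \wck \). Every element of \( \kleeneO1 \) is therefore a genuine well-founded notation, so your choice of \( \alpha^{*} \in \kleeneO1 \setminus \kleeneO \) does not exist, and with it neither your witness \( \zeron{\alpha^{*}} \) nor the non-standard portion of your model \( M \). (The non-standard jump hierarchy does appear in the \emph{construction} of the tower in \cref{sec:non-standard-tower}, but it is not part of the data delivered by \cref{thm:non-standard} or \cref{def:uniformly-subgeneric}.) With \( \kleeneO1 \subset \kleeneO \), your \( M \) is just the set of reals computable from some \( \jumpn{g}{\alpha} \) for \( \alpha \in \kleeneO \), i.e.\ at best \( \HYP[g] \); and for an arbitrary \( g \in [T_0] \) you have not even secured \( \wck[g] = \wck \), so you cannot yet conclude \( M = \HYP[g] \), let alone that it satisfies \( \sigmain*{1} \)-DC.

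The paper's argument fixes both issues differently. It first applies the Gandy Basis theorem to select \( g \in [T_0] \) with \( \wck[g] = \wck \), so that \( M \eqdef \HYP[g] \) is automatically a model of \( \sigmain*{1} \)-DC by relativizing the classical proof for \( \HYP \). The \( \deltain{1} \) witness is not a non-standard jump but \( g \) itself: uniform subgenericity guarantees \( g \) is the \emph{only} member of \( [T_0] \) lying in \( \HYP[g] \), which gives \( g \) a \( \deltain{1} \) definition over \( M \). One then argues that \( M \not\models g \in \HYP \), since any purported \( \alpha \) with \( H(X,\alpha) \land g \Tleq X \) in \( M \) would have to be non-standard (as \( g \nin \HYP \)), and the subgenericity of \( g \) yields a contradiction with \( \zeron{\beta \Oadd 1} \Tleq X \Tleq \jumpn{g}{\beta} \) for suitable \( \beta \).
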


Here we are working in a 2-sorted structure in the language of second-order arithmetic with  number variables ranging over \( \omega \) and the set variables ranging over elements in \( \HYP[g] \).  When dealing with models of second-order arithmetic we identify functions with the set giving their graph.

\begin{proof}
We start by applying \cref{thm:non-standard} to produce a computable tree \( T \) such that \( [T] \)  is mutually subgeneric (and \( [T] \neq \eset \)).  As suggested in \cite{mathoverflow65} we can apply the Gandy Basis theorem   to find an element \( g \in [T] \) with \( \wck[g] = \wck \).  We now argue that \( \HYP[g] \) provides an \( \omega  \)-model of  \( \sigmain*{1} \)-DC.  

The argument that \( \HYP[g] \) models  \( \sigmain*{1} \)-DC is a straightforward relativization of the argument that \( \HYP \) models  \( \sigmain*{1} \)-DC.  Now we argue that \( g \) has a \( \deltain{1} \) definition in \( \HYP[g] \).  Using \( T(\sigma)  \) to represent the computable predicate for membership in \( T \) we claim that 
\begin{align*}
g(x) = k &\iff \forall(h \in  \HYP[g])\exists(n > x)\left(h(x) = k \lor \lnot T(h\restr{n})  \right) \\
 & \iff \exists(h \in \HYP[g])\forall(n)\left(T(h\restr{n}) \land h(x) = k \right)   
\end{align*}
This follows from the uniform subgenericity of all paths through \( T \) guaranteeing that \( g \) is the only real in \( \HYP[g] \) that is a path through \( [T] \). This becomes a \( \deltain{1} \) definition over \( \HYP[g] \) simply by omitting the restrictions.

Now suppose, for a contradiction, \( \HYP[g] \models g \in \HYP \).  That is, 
\[ \HYP[g]  \models \exists(\alpha \in \kleeneO)\exists(X)\left(H(X, \alpha) \land  g \Tleq X \right) \] 
where \( H(X, \alpha) \) is the \( \pizn{2} \) condition\footnote{See theorem 4.2 of chapter 2 of \cite{Sacks1990Higher} for a definition of \( H \).}  guaranteed to be uniquely satisfied by \( \zeron{\alpha} \) if \( \alpha \in \kleeneO \).  Let \(\alpha,  X \in \HYP(g) \) witness the truth of the supposition.

As \( g \nin \HYP \) any purported notation \( \alpha \) witnessing the above can't actually be in \( \kleeneO  \).  However, analyzing the definition of \( \kleeneO \) (see \cref{eq:def-o} in appendix \ref{app:defo}) reveals that the only way \( \HYP(g) \models \alpha \in \kleeneO \)  when it isn't is if there was some infinite \( \Oless \)  decreasing sequence below \( \alpha \) but no such sequence hyperarithmetic in \( g \).  But (see lemma 2.2 of chapter 3 of \cite{Sacks1990Higher}) this guarantees that \( \set{\beta \Oless \alpha} \) has a well-founded initial segment of length \( \wck \) and (see \cref{eq:def-o} again) this guarantees that \( \alpha \) bounds some unique path of notations through \( \kleeneO \).  As \( X \in \HYP(g) \) choose \( \beta \Oless \alpha \) to be such that \( X \Tleq \jumpn{g}{\beta} \).    

Using the inductive definition of \( H(X, \alpha) \) we know that since \( \beta \Oadd 1 \Oless \alpha \) and \( H(\zeron{\beta \Oadd 1}, \beta \Oadd 1) \) we must have \( \zeron{\beta \Oadd 1} \Tleq X \Tleq \jumpn{g}{\beta}  \).   However, this contradicts the subgenericity of \( g \) as  \( \zeron{\beta \Oadd 1} \nTleq \zeron{\beta} \).  Thus, \( \HYP[g] \) satisfies  \( \sigmain*{1}\)-DC  but doesn't satisfy that every \( \deltain{1} \) set is hyperarithmetic.

\end{proof}

Our final application is to solve problem 71 of \cite{Friedman1975One}.   In doing so we will take the opportunity to also state some results we will need later.

\subsection{Problem 71}\label{ssec:prob71}

\begin{corollary}[Solution to 71]\label{cor:path-through-O}
There is a path \( \kleeneO1 \) through \( \kleeneO \) such that every \( X \in \HYP  \) with \( X \Tleq \kleeneO1  \) is computable.   
\end{corollary}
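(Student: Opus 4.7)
The plan is to combine \cref{thm:non-standard} with the Gandy basis theorem in close analogy with the proof of \cref{cor:L-hyp-not-deltaii}. First apply \cref{thm:non-standard} to obtain a computable tree \( T_0 \) whose paths form a uniformly subgeneric class, and then invoke the Gandy basis theorem on the non-empty \( \piin{1} \) class \( [T_0] \) to pick \( g \in [T_0] \) with \( \wck[g] = \wck \) (so in particular \( g \Tleq \kleeneO \)). The path \( \kleeneO1 \) will be extracted from the non-standard apparent notation \( \alpha \in \kleeneO* \setminus \kleeneO \) driving the construction: exactly as in the proof of \cref{cor:L-hyp-not-deltaii}, we arrange that the genuine well-founded part of \( \alpha \) has order type \( \wck \) and so, by lemma 2.2 of chapter 3 of \cite{Sacks1990Higher}, may be identified with a path of notations through \( \kleeneO \). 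With appropriate care in the construction, this path will also be computable from \( g \).

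Granting the key claim \( \kleeneO1 \Tleq g \), the corollary follows immediately from the subgenericity of \( g \). Suppose \( X \in \HYP \) and \( X \Tleq \kleeneO1 \); then \( X \Tleq g \), and since the path witnessing \( g \)'s uniform subgenericity is cofinal in \( \wck \), there is some \( \beta > 0 \) on that path with \( X \Tleq \zeron{\beta} \). Applying \cref{eq:subgeneric:meet} with \( \beta' = 0 \) then gives
\[
X \Tleq \jumpn{g}{0} \Tmeet \zeron{\beta} = \zeron{0},
\]
so \( X \) is computable.

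The main obstacle will be rigorously justifying \( \kleeneO1 \Tleq g \). This requires reading into the details of the construction in \cref{thm:non-standard}: we need to show that, for a suitably chosen non-standard \( \alpha \), the uniformity functional \( \Gamma \) from \cref{def:uniformly-subgeneric} lets \( g \) alone (without help from \( \kleeneO \)) identify which notations \( \Oless_\alpha \alpha \) lie in the genuine well-founded part of \( \alpha \). Without this effective extraction one obtains only the weaker \( \kleeneO1 \Tleq g \Tplus \kleeneO \), which is insufficient for applying the subgenericity bound above.
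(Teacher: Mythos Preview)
Your proposal has a genuine gap at exactly the point you flag: the claim \( \kleeneO1 \Tleq g \). The path \( \kleeneO1 \) arising internally from the proof of \cref{thm:non-standard} is, up to Turing degree, the leftmost path \( f_U \) through the auxiliary tree \( U \) supporting the jump hierarchy. There is no evident mechanism by which an arbitrary \( g \in [T_0] \) \textemdash\ even one chosen via the Gandy basis theorem \textemdash\ computes \( f_U \). The uniformity functional \( \Gamma \) of \cref{def:uniformly-subgeneric} lets you compute \( \jumpn{g}{\beta} \) \emph{given} \( \zeron{\beta} \) and \( g \), but gives no way to test from \( g \) alone whether a purported notation lies in the well-founded part; you would be asking \( g \) to decide, uniformly in \( \beta \), whether \( \zeron{\beta} \) exists, which is not something subgenericity provides. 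The appeal to the Gandy basis theorem is a red herring here: in \cref{cor:L-hyp-not-deltaii} it was used to make \( g \) \( \deltain{1} \) over \( \HYP[g] \), not to make \( g \) compute any particular path through \( \kleeneO \).

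The paper takes a different route that sidesteps this obstacle entirely. Rather than trying to extract \( \kleeneO1 \) from the internals of the construction, it manufactures a \emph{new} path through \( \kleeneO \) directly from the output tree \( T \). One takes the leftmost path \( f \in [T] \) (no Gandy basis needed), linearizes \( T^{+} \) under the Kleene-Brouwer order \( \KBless \), and shows (\cref{lem:kb-wellfounded}) that \( T^{+}_{\leftof f} \) is the maximal well-ordered initial segment and has height \( \wck \). A computable, monotone, order-preserving map \( \rho \) (\cref{lem:kb-to-O}) then carries \( T^{+}_{\leftof f} \) onto a path \( \kleeneO1 \) through \( \kleeneO \), and the monotonicity of \( \rho \) is precisely what yields \( \kleeneO1 \Tequiv T_{\leftof f} \Tequiv f \) outright \textemdash\ this is why the paper is careful to build \( \rho \) strictly increasing on codes, not merely r.e. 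Since \( f \in [T] \) is subgeneric, so is \( \kleeneO1 \), and then your final paragraph applies verbatim.
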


Our approach will be to use the  Kleene-Brouwer ordering to linearise the elements of a tree with only subgeneric paths.  We will then argue that, as the tree has no hyperarithmetic paths, that we can identify the well-founded part of the tree to the left of all infinite paths with some path through \( \kleeneO \).  This will demonstrate that the leftmost path through our tree is a subgeneric degree and also the degree of a path through \( \kleeneO \).

This argument in this subsection draws heavily from the construction of a path through \( \kleeneO \) in  ch. 3 \S 2 of \cite{Sacks1990Higher}.  Unfortunately, we can't simply cite to this result as the construction there only guarantees the path is r.e. in the well-ordered initial segment.  We start by recalling the following definition.

\begin{definition}\label{def:kb-order}
The Kleene-Brouwer ordering on strings is defined by
\[ \sigma \KBless \tau \iff \sigma \supfunneq \tau \lor \sigma \leftof \tau \]
where \( \leftof \) means left-of 
\end{definition}

This is clearly a linear order on \( \wstrs \) and we now observe that it preserves well-foundedness facts in an effective fashion.

\begin{lemma}\label{lem:kb-wellfounded}
Suppose \( T \) is a tree then 
\begin{itemize}
\item The relation \( \KBless \) restricted to \( T \)  is a well-order iff \( [T] = \eset \) and admits an infinite descending hyperarithmetic sequence iff \( [T] \) has an infinite hyperarithmetic path.  

\item If \( f \) is the leftmost path through \( T \) then   \( T_{\leftof f} \eqdef \set{\sigma}{\sigma \in T \land \sigma \leftof f} \) is the maximal well-ordered initial segment of \( T \) under \( \KBless \).   

\item If \( T \in \HYP \) and \( [T] \neq \eset = [T] \isect \HYP \) then \( \KBless \) restricted to \( T_{\leftof f} \) has height \( \wck \).  
\end{itemize}   
\end{lemma}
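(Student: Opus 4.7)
The plan is to combine the classical Kleene-Brouwer correspondence with an identification of $T_{\leftof f}$ as the well-founded part of $(T, \KBless)$, then bound the resulting height via hyperarithmetic considerations.

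For (1), the forward direction is immediate: for any $h \in [T]$, the chain $h\restr{i+1} \KBless h\restr{i}$ gives an infinite descent. Conversely, given $(\sigma_i)$ in $T$ with $\sigma_{i+1} \KBless \sigma_i$, I extract a path $h$ position by position. Each step is either an extension or a leftward move; once $\sigma_i(m)$ has stabilized to $h(m)$ for $m < n$ and sufficiently large $i$, the values $\sigma_i(n)$ (which must exist for large $i$, since $\sigma_{i+1}$ cannot be a proper initial segment of $\sigma_i$ without violating descent) form a weakly decreasing sequence of nonnegative integers \textemdash extension preserves the value at $n$, while a leftward move with the first $n$ positions fixed must first disagree at a position $\geq n$ and so weakly decreases $\sigma_i(n)$ \textemdash so they stabilize at some $h(n)$. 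Downward closure of $T$ then gives $h \in [T]$, and the whole extraction is uniformly computable in $(\sigma_i)$, delivering the hyperarithmetic refinement at the same time.

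For (2), a case split on $\sigma \KBless \tau$ (extension versus leftward move, with sub-cases for the position of the leftward move relative to the position $j_\tau$ at which $\tau$ first disagrees with $f$) shows $T_{\leftof f}$ is a $\KBless$-initial segment of $T$. To see it is well-ordered, suppose a descending $(\sigma_i)$ lives in $T_{\leftof f}$ and let $j_i$ be the first position where $\sigma_i$ disagrees with $f$, so $\sigma_i(j_i) < f(j_i)$. A parallel case split yields $j_{i+1} \leq j_i$, so $j_i$ stabilizes at some $j^*$, forcing the extracted path $h$ to satisfy $h\restr{j^*} = f\restr{j^*}$ and $h(j^*) < f(j^*)$; hence $h \leftof f$, contradicting leftmostness of $f$. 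For maximality, any $\sigma \in T \setminus T_{\leftof f}$ is either compatible with $f$ (and so of the form $f\restr n$) or lies to the right of $f$; in either case $f\restr k \KBless \sigma$ for all sufficiently large $k$, so any $\KBless$-initial segment containing $\sigma$ contains the infinite descending chain $\{f\restr k\}$ and cannot be well-ordered.

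For (3), the analysis in (2) identifies $T_{\leftof f}$ with the well-founded part of $(T, \KBless)$, a set $\piin{1}$ in $T$. For the upper bound, each proper initial segment $\{\tau \in T : \tau \KBless \sigma\}$ with $\sigma \in T_{\leftof f}$ is hyperarithmetic (as $T$ is) and well-ordered (being contained in $T_{\leftof f}$), so has order type $< \wck$; the height of $T_{\leftof f}$, being the supremum of such ordinals, is therefore at most $\wck$. For the lower bound, suppose for contradiction the height is some $\alpha < \wck$. Then every rank in $(T_{\leftof f}, \KBless)$ is $< \alpha$, so a transfinite recursion of length $\alpha$ on the hyperarithmetic set $T$ computes the rank function and hence defines $T_{\leftof f}$ hyperarithmetically. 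But $f$ is computable from $T$ and $T_{\leftof f}$ via $f(n) = \min\{k : f\restr n \concat \str{k} \in T \setminus T_{\leftof f}\}$, so $f$ would itself be hyperarithmetic, contradicting $[T] \isect \HYP = \eset$. The main obstacle is the $j_i$-monotonicity argument in (2): the naive extraction of a path from a descending sequence in $T_{\leftof f}$ only guarantees $h \leftofeq f$, and pinning down $h \leftof f$ (so that leftmostness can be invoked) requires the case-by-case observation that $j_i$ is non-increasing under each step of $\KBless$-descent.
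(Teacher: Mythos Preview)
Your proof is correct and follows essentially the same approach as the paper's. The paper cites Sacks's \emph{Higher Recursion Theory} (Ch.~3, Lemmas 2.1 and 2.2) for parts (1) and (3) and gives only a terse argument for (2); you have unpacked those citations into self-contained arguments, and in doing so you are actually more careful than the paper on one point---the paper asserts without justification that a descending sequence in $T_{\leftof f}$ yields a path $g$ with $g \leftof f$ (not merely $g \leftofeq f$), whereas your $j_i$-monotonicity argument supplies exactly that missing step.
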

\begin{proof}
The first claim is proved as lemma 2.1 in Ch. 3 in \cite{Sacks1990Higher}.  For the second claim we note that the cited proof shows that if \( \sigma_i \) is an infinite decreasing sequence (under \( \KBless \)) then there is an infinite compatible subsequence (necessarily of unbounded length).  Now if \( \sigma_i \) was an infinite decreasing sequence in \( T_{\leftof f} \) then there would be some \( g \in [T] \) with \( g \leftof f \) contradicting the fact that \( f \) was the leftmost path.  

Thus, \( T_{\leftof f} \) is well-ordered by \( \KBless \).  As any \( \sigma \in T \setminus T_{\leftof f} \) bounds the infinite decreasing sequence given by \( f \), \( T_{\leftof f} \) is the maximal well-ordered initial segment.   Finally, the moreover claim is proved in Ch 3. lemma 2.2  of \cite{Sacks1990Higher} for computable trees and this result follows immediately by relativization to \( T \) since the ordinals constructive in \( T \) are equal to the constructive ordinals.    
\end{proof} 

We note the following useful property of linear orderings without hyperarithmetic descending chains.  

\begin{lemma}\label{lem:hyp-set-has-min}
If \( < \in \HYP \) is a linear ordering on a computable domain \( D  \)  with no infinite hyperarithmetic descending sequence then every \( \piin{1} \) subset  of \(  D \) has a \( < \) least element.  Also, every \( \piin{1} \) subset of \( D^{n}, n \in \omega \) has a lexicographically (over \( < \)) minimal element.    
\end{lemma}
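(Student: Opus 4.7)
The plan is to prove the first statement by contradiction, using $\piin{1}$-uniformization on $\omega$ to extract a hyperarithmetic descending chain. Suppose $P \subseteq D$ is a nonempty $\piin{1}$ set with no $<$-least element. Then the relation $R(x,y) \iff x \in P \land y \in P \land y < x$ is $\piin{1}$ on $\omega \times \omega$ and satisfies $\forall x \in P\, \exists y\, R(x,y)$. Applying $\piin{1}$-uniformization on $\omega \times \omega$ (as in \cite{Sacks1990Higher}) yields a $\piin{1}$-function $f$ with $\mathrm{dom}(f) = P$, $f(x) \in P$, and $f(x) < x$. Fixing any $x_0 \in P$ and setting $x_{n+1} := f(x_n)$ gives a strictly $<$-descending sequence $(x_n)_{n<\omega}$ inside $P$.

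The crucial step is verifying that $n \mapsto x_n$ is hyperarithmetic. Because the iteration stays in $P$, the statement ``$x_n = y$'' can be witnessed by a coded finite sequence $\langle a_0 = x_0, a_1, \ldots, a_n = y\rangle$ of elements of $P$ with $a_{i+1} = f(a_i)$; using closure of $\piin{1}$ under number quantification and bounded universal quantification, this is $\piin{1}$. Since $x_n$ is always defined, ``$x_n \neq y$'' is equivalent to $\exists y' \neq y\, [x_n = y']$, which is also $\piin{1}$. Thus the graph of $n \mapsto x_n$ is $\deltain{1} = \HYP$, yielding a HYP infinite $<$-descending sequence and contradicting the hypothesis; so $P$ must have a $<$-least element.

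For the lex-minimal claim, I would reduce to the first part by verifying that $(D^n, <_{\mathrm{lex}})$ satisfies the same hypotheses. Plainly $<_{\mathrm{lex}}$ is HYP on the computable domain $D^n$. Given any HYP $<_{\mathrm{lex}}$-descending sequence $(x^{(k)})_k$, the first-coordinate sequence $(x^{(k)}_1)_k$ is HYP and weakly $<$-descending: either it strictly descends infinitely often, in which case an effectively-extracted HYP subsequence contradicts the hypothesis on $<$, or it stabilizes on some HYP tail, at which point one recurses on the remaining $n-1$ coordinates; after $n$ such stabilizations the coordinates all coincide, contradicting strict lex-descent. The main obstacle is preserving hyperarithmeticity through the iteration of $f$: this hinges on $f$ being \emph{total} on its $\piin{1}$ domain $P$, which is precisely what promotes the graph of the iteration from $\piin{1}$ to $\deltain{1}$ and keeps the constructed descending chain inside $\HYP$.
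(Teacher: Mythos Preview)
Your proof is correct and follows the same overall strategy as the paper: negate the conclusion, uniformize the relation ``there is a smaller element of \( P \)'', and iterate from some \( x_0 \in P \) to produce a descending chain in \( \HYP \). The paper, however, explicitly proves only the special case \( P \in \HYP \) (deferring the full \( \piin{1} \) statement to Harrison's thesis); in that case the uniformizing function \( f \) is already hyperarithmetic, so the iteration \( z_{i+1} = f(z_i) \) is trivially in \( \HYP \) and no further work is needed. You instead handle the full \( \piin{1} \) case via \( \piin{1} \)-uniformization on \( \omega \), and your additional step---arguing that the graph of \( n \mapsto x_n \) is \( \deltain{1} \) by using totality of the iteration on \( P \) to express both ``\( x_n = y \)'' and ``\( x_n \neq y \)'' as \( \piin{1} \) conditions---is exactly what is required to bridge that gap. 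Your treatment of the lexicographic case is also more detailed than the paper's one-line remark that the generalization is immediate.
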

This result will allow us to pretend \( < \) is a well-order and argue via induction when we are dealing with hyperarithmetic properties \textemdash which we will often do without explicit reference to this result. 
\begin{proof}
The main claim is proved in \cite{harrison_thesis} but we will only use it in the case where \( X \in \HYP \) which is what we prove.  If the claim failed then we would have. 
\[ \forall(z)\exists(y)\left(z \nin X \lor \left[y \in X \land y < z\right]\right) \]  
But the relationship between \( z \) and \( y \) is clearly a \( \piin{1} \) property if \( X \in \HYP \)  so we can uniformize it with a hyperarithmetic function \( f \) satisfying
\[ \forall(z)\left(z \nin X \lor \left[f(z) \in X \land f(z) < z\right]\right) \]  
Starting with some \( z_0 \in X \) the sequence \( z_{i+1} = f(z_i) \) defines an infinite hyperarithmetic decreasing sequence contradicting our assumption.  The generalization to the lexicographic order defined using \( < \) is immediate.  
\end{proof}

We will soon show that if \( T \) is a computable tree whose paths are uniformly subgeneric and \( f \) is the leftmost path through \( T \)  then  we can computably map \( T_{\leftof f}  \) to the set of limit notations on a path through \( \kleeneO \).  This would be enough for our purposes here, but later we will want to have the image of \( T_{\leftof f} \) to be a full path through \( \kleeneO \).  To this end, we will add strings to \( T \) without adding paths so these new strings will fill in the missing successor notations.

\begin{definition}\label{def:shift-right}
Given a tree \( T \) define  \( T^{+}  \) to be \( T \union \set{\sigma\concat[n]}{\sigma \in T \land n \in \omega} \).  

Also define \( \sigma^{+} \) for \( \sigma \in T^{+} \) as follows so that \( \sigma^{+} \) will be the immediate successor of \( \sigma \) under \( \KBless \)  in \( T^{+} \).  
\begin{equation*}
    \sigma^{+} \eqdef \begin{cases}
                        \diverge & \text{if } \sigma \nin T^{+} \\
                        \eta(\estr) & \text{if } \sigma = \estr \\
                        \eta(\sigma^{-}\concat[x+1]) &  \text{o.w. where } x = \sigma(\lh{\sigma}-1)
                        \end{cases}
\end{equation*}
Where \( \eta(\sigma) \) is defined to be \( \sigma\concat[0^k] \) for least \( k \in \omega \) (possibly \( 0 \)) so that  \( \eta(\sigma) \nin T \).  
\end{definition}

Whenever \( \sigma \in T^{+} \) then we will have \( \eta(\sigma) \in T^{+} \setminus T \) and \( \eta(\sigma) \) will be the \( \KBless \)  least \( \tau \supfun \sigma \) in \( T^{+} \).   We now map \( T^{+} \) into a compatible set of (apparent) ordinal notations.

\begin{lemma}\label{lem:kb-to-O}
Suppose that \( T \) is a computable tree without hyperarithmetic branches and \( f \) is the leftmost path through \( T \) then there is a  computable function \( \rho \) such that 
\begin{enumerate}
 \item\label{lem:kb-to-O:dom}   \( \dom \rho = T^{+}  \) 
 \item\label{lem:kb-to-O:mono}  \( \rho\restr{T} \) is strictly monotonically increasing (under \( < \) applied to codes not \( \KBless \)) .
 \item\label{lem:kb-to-O:onto} If \( \sigma \in T^{+}_{\leftof f}  \) then \( \rho(\sigma) \in \kleeneO \) and \( \hgt{\rho(\sigma)} = \hgt{\sigma}_{\KBless} \).
 \item\label{lem:kb-to-O:w-restr} \( \sigma \in T  \) iff  \(  \rho(\sigma) \) is a limit notation.  
 \item\label{lem:kb-to-O:comp} For all \( \sigma, \tau \in T^{+} \),  \( \rho(\sigma)  \) and \( \rho(\tau) \) are comparable under \( \Oleq \) .     
\end{enumerate}

\end{lemma}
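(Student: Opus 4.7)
The plan is to define \( \rho \) by effective transfinite recursion (via the recursion theorem), splitting by whether \( \sigma \in T \) or \( \sigma \in T^+ \setminus T \). For the \emph{successor case} \( \sigma \in T^+ \setminus T \), there is a unique \( \pi(\sigma) \in T^+ \) with \( \pi(\sigma)^+ = \sigma \): compute \( \pi(\sigma) \) from \( \sigma \) by locating the last nonzero entry, subtracting one, and truncating any trailing zeros. Set \( \rho(\sigma) = 2^{\rho(\pi(\sigma))} \). For the \emph{limit case} \( \sigma \in T \), set \( \rho(\sigma) = 3 \cdot 5^{e(\sigma)} \) where \( e(\sigma) \) is an index, obtained via the recursion theorem, for \( n \mapsto \rho(\sigma\concat[n]) \). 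Iterating \( \pi \) strictly reduces the sum of entries plus length of \( \sigma \), so the recursion terminates and \( \rho \) is total and computable. Properties (1) and (4) are immediate from this setup, and (2) is secured by padding \( e(\sigma) \) to grow monotonically with the code of \( \sigma \).

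For (5) I would show that \( \sigma \KBless \tau \) in \( T^+ \) implies \( \rho(\sigma) \Oless \rho(\tau) \). Two elementary inequalities are direct from the construction: \( \rho(\sigma) \Oless \rho(\sigma^+) \) (successor rule), and \( \rho(\sigma\concat[n]) \Oless \rho(\sigma) \) when \( \sigma \in T \) (approximants lie below their limit). The ``proper extension'' case \( \sigma \supfunneq \tau \) is handled by iterating the second observation through the chain of intermediate prefixes, all of which lie in \( T \) by downward closure and the definition of \( T^+ \). The ``left-of'' case reduces, via the common prefix \( \mu \), to the \emph{monotonicity of approximants} \( \rho(\mu\concat[s]) \Oless \rho(\mu\concat[t]) \) for \( s < t \), which I would establish by joint induction with (5) itself; this is the main subtlety, handled by a careful case analysis on whether each \( \mu\concat[k] \) lies in \( T \) or \( T^+ \setminus T \), unfolding the recursion equation (noting e.g.\ that \( \pi(\mu\concat[k+1]) = \mu\concat[k] \) in the generic successor subcase) and applying the inductive hypothesis at each strictly simpler instance. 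Once (5) is secured, transitivity of \( \Oless \) on \( \kleeneO* \) places the \( \rho \)-values on a single \( \Oleq \)-chain.

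The heart of the proof is (3). I would induct on \( \hgt{\sigma}_{\KBless} \) for \( \sigma \in T^+_{\leftof f} \), using the key closure observation that \( T^+_{\leftof f} \) is \( \KBless \)-downward closed in \( T^+ \): a short case analysis on whether \( \tau \KBless \sigma \) arises from \( \tau \supfun \sigma \) or \( \tau \leftof \sigma \), comparing the first disagreement of \( \sigma \) with \( f \), yields \( \tau \leftof f \) in each case. Thus the recursion defining \( \rho \) on \( T^+_{\leftof f} \) stays inside \( T^+_{\leftof f} \) and the inductive hypothesis applies. The successor step gives \( \rho(\sigma) = 2^{\rho(\pi(\sigma))} \in \kleeneO \) of height \( \hgt{\pi(\sigma)}_{\KBless} + 1 = \hgt{\sigma}_{\KBless} \). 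For the limit step, I use that \( \{\sigma\concat[n] : n \in \omega\} \) is cofinal in \( \{\tau \in T^+ : \tau \KBless \sigma\} \)---any such \( \tau \) either extends some \( \sigma\concat[m] \) (hence \( \KBless \sigma\concat[m+1] \)) or is left-of \( \sigma \) (hence \( \KBless \sigma\concat[0] \))---so \( \hgt{\sigma}_{\KBless} = \sup_n \hgt{\sigma\concat[n]}_{\KBless} \), matching \( \hgt{\rho(\sigma)} \) by the inductive hypothesis. The principal obstacle is securing the monotonicity-of-approximants claim used in (5); once that is in hand, the rest follows from the clean closure and cofinality observations above.
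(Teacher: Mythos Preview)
Your construction of \( \rho \) via the recursion theorem is essentially the paper's, and your treatment of parts (1)--(4) and the cofinality/closure arguments for (3) are fine. One small omission: you need a separate base case \( \rho(\eta(\estr)) = 0 \), since \( \eta(\estr) \) consists entirely of zeros and your recipe for \( \pi \) (find the last nonzero entry, decrement, strip zeros) does not apply to it.

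The real gap is in your argument for (5). Your reduction of the left-of case to monotonicity of approximants is incomplete: from \( \sigma \supfun \mu\concat[s] \), \( \tau \supfun \mu\concat[t] \) with \( s < t \) you get \( \rho(\sigma) \Oleq \rho(\mu\concat[s]) \Oless \rho(\mu\concat[t]) \), but the extension case gives \( \rho(\tau) \Oleq \rho(\mu\concat[t]) \), which is the wrong direction to conclude \( \rho(\sigma) \Oless \rho(\tau) \). More fundamentally, you invoke a ``joint induction with (5) itself'' and speak of ``strictly simpler instances'' without naming any well-founded relation on pairs \( (\sigma,\tau) \). The obvious candidate, \( \KBless \) itself (or its lexicographic square), is \emph{not} well-founded on \( T^{+} \) since \( [T] \neq \eset \), so ordinary transfinite induction along it is illegitimate. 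This is not a cosmetic issue: it is exactly where the hypothesis that \( T \) has no hyperarithmetic branches must enter. The paper proves (5) by taking a \emph{lexicographically \( \KBless \)-least} counterexample pair, which exists by \cref{lem:hyp-set-has-min} (every \( \piin{1} \) subset of a linear order with no hyperarithmetic descending chain has a least element); this is the crucial step your sketch is missing. Once one has such a minimal pair, both components can be assumed to lie in \( T \), and passing to \( \tau\concat[n] \) yields an immediate contradiction.
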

Note that parts \ref{lem:kb-to-O:onto} and \ref{lem:kb-to-O:comp} together ensure that \( \rho \) is order preserving on the well-ordered initial segment of \( T \).  Also note that parts\ref{lem:kb-to-O:onto} and \ref{lem:kb-to-O:w-restr}  guarantee that if \( \sigma \in T^{+}\setminus T \) then \( \rho(\sigma) \) is a (apparent) successor notation.  The reason for \ref{lem:kb-to-O:mono} is to ensure that we can compute the image of \( T^{+}_{\leftof f}   \) under \( \rho \) from  \( T^{+}_{\leftof f}  \) \textemdash otherwise the image risks merely being r.e. in the domain.    

\begin{proof}
By the recursion theorem we may assume that we have access to an index \( i \) for the functional \( \rho \) we are building.  We denote the functional given by that index as \( \hat{\rho} \) and define

\begin{equation}\label{eq:rho}
\rho(\sigma) \eqdef \begin{dcases}
                        \diverge & \text{if } \sigma \nin T^{+} \\
                       \lim_{n\to\infty} \hat{\rho}(\sigma\concat[n])  & \text{if } \sigma \in T \\
                        0 & \text{if } \sigma = \eta(\estr) \\
                        \hat{\rho}(\tau) \Oadd 1 & \text{otherwise where } \tau^{+} = \sigma \\
                    \end{dcases}
\end{equation}

It is relatively straightforward to see that if \( \sigma \in T^{+}\setminus T \) and \( \sigma \neq \eta(\estr) \) then we can find some \( \tau \) with \( \tau^{+} = \sigma \).  Moreover, by choosing an appropriately large index for the effective limit we can ensure that part \ref{lem:kb-to-O:mono} is satisfied.  Since we only use \( \hat{\rho} \) in the definition above applied to nodes that are smaller under \( \KBless \) a straightforward induction shows that \( \rho \) will be defined on all (and only) elements in \( T^{+} \) verifying part \ref{lem:kb-to-O:dom}.   A similar induction lets us vindicate \ref{lem:kb-to-O:onto} and, by construction,  \( \rho(\sigma) \) is always a limit notation when \( \sigma \in T \)  and a successor notation when \( \sigma \in T^{+} \setminus T \) verifying part \ref{lem:kb-to-O:w-restr}.

This leaves us to prove part \ref{lem:kb-to-O:comp}.  Suppose that the claim fails and \textemdash by \cref{lem:hyp-set-has-min} \textemdash let \( \sigma \KBless \tau \) be the  lexicographically least pair under \( \KBless \) which witnesses the failure.  WLOG we must have both \( \sigma, \tau \in T \) since if either was in \( T^{+} \) then considering their predecessor under \( \KBless \) would contradict the minimality.  However, in this case, \( \rho(\tau) \) is equal to the limit of \( \rho(\tau\concat[n]) \) and for some \( n \) we must have \( \rho(\sigma) \Oless \rho(\tau\concat[n]) \).  Therefore,  \( \rho(\sigma) \Oless \rho(\tau) \) contradicting our assumption.                  
\end{proof}  

We can now prove the corollary.  Recall that we must show there is a path \( \kleeneO1 \) through \( \kleeneO \) such that every \( X \in \HYP  \) with \( X \Tleq \kleeneO1  \) is computable.   

\begin{proof}
By \cref{thm:non-standard} let \( T \) be a computable tree such that \( [T] \neq \eset \) is uniformly subgeneric.  As \( T^{+} \) has only  non-hyperarithmetic paths by \cref{lem:kb-wellfounded} we have that \( T^{+}_{\leftof f} \) is the well-ordered initial segment of \( \KBless \) on \( T^{+} \) and has height \( \wck \).  Letting \( \rho \) be as in \cref{lem:kb-to-O} immediately entails that the image of \( T^{+}_{\leftof f}\) under \( \rho \) \textemdash call it \( \kleeneO1 \) \textemdash is path in \( \kleeneO \) with height \( \wck \) and, thus, a path through \( \kleeneO \).  

The monotonicity of \( \rho \) on \( T \)  guarantees that the image of \( T_{\leftof f} \) under \( \rho \) is Turing equivalent to \( T_{\leftof f} \Tequiv f \).  It is easy to verify that \( \kleeneO1 \) \textemdash the image of \( T^{+}_{\leftof f} \) under \( \rho \) \textemdash is Turing equivalent to the image of \( T_{\leftof f} \) under \( \rho \) and thus \( f \).  Thus, \( \kleeneO1 \) is subgeneric.  

But if \( X \in \HYP \) and \( X \Tleq \kleeneO1 \) then \textemdash as \( X \Tleq \zeron{\alpha} \) for some \( \alpha \in \kleeneO \) \textemdash the subgenericity of \( \kleeneO1 \) ensures that \( X \Tleq \zeron{0} \).  Thus, \( \kleeneO1 \) is a path through \( \kleeneO \) which doesn't compute any non-computable hyperarithmetic sets.  
\end{proof}

\section{Towers of Trees}\label{sec:towers}

\Cref{thm:standard,thm:non-standard} both ask ask us to produce computable trees whose paths behave genericish.  The need to keep the tree computable prevents us from using a normal forcing construction but directly building such a tree would require an excruciatingly complicated construction.  Harrington's trick to avoid a complicated worker style argument is to use jump inversion to divide up the work into levels.  

To motivate this idea, consider the proof that there are  r.e. sets in \( \highN{n+1} - \highN{n} \) or  \( \lowN{n+1} - \lowN{n} \) for any \( n \).  One could certainly construct such sets via a direct \( \zeron{n+1} \) priority argument but a far easier proof is to start with the \( n \)-th jump you want realized and apply jump inversion \( n \)-times.  However, this approach is generally limited to constructing r.e. or  \( \deltazn{2} \) sets (or, by relativization, \REA sets) since only the Sacks and Shoenfield jump inversion theorems \cite{sacks_recursive_1963,shoenfield_degrees_1959} produce a more definable set than the set being inverted  \textemdash absent the need to produced a highly definable set direct forcing/coding constructions are usually superior.  However, we evade these usual limitations because it is \( T_0 \) we need to make highly definable while it is the paths through \( T_0 \) whose jumps we want to control.

To apply this jump inversion based approach to the construction of a tree we introduce the idea of a tower of trees. The idea here is that  every path through \( T_0 \) is produced by jump inversion of some path through some tree \( T_1 \Tleq \zeroj \) and, in turn, every path through \( T_1 \)  is produced by jump inversion of some path through some tree \( T_2 \Tleq \zerojj \) and so on.   Thus, we can think of paths through \( T_{\beta} \Tleq \zeron{\beta} \) as (when joined with \( \zeron{\beta} \)) as the \( \beta \) jump of a path through \( T_0 \).  In the rest of this section, we make this idea precise and show that we can control the necessary facts about \( \jumpn{g}{\beta} \) for \( g \in [T_0] \) by controlling \( T_{\beta} \).              

\subsection{Defining Towers} 

For this definition, recall that a path in \( \kleeneO \) is a downward closed linearly ordered subset of \( \kleeneO \), i.e., \( \set{\beta \Oleq \alpha} \), \( \set{\beta \Oless \alpha} \) or a path through \( \kleeneO \) like  \( \kleeneO1 \).

\begin{definition}\label{def:tower}
A tower (of trees) consists of a non-empty path  \( I \) in \( \kleeneO \), trees \( T_\beta \) for each \( \beta \in I \) and a function \( \Gamma^{\gamma}_{\beta}(\cdot) \) for each pair \( \gamma \Oless \beta \in I \) such that 
\begin{enumerate}
    \item  \( T_\beta \Tleq \zeron{\beta} \) uniformly in \( \beta \) and \( l^{\gamma}_\beta \) is a computable function of \( \gamma, \beta \)  
    \item  \( \Gamma^{\gamma}_{\beta}(\sigma) \eqdef \Gamma^{\gamma}_{\beta}(\zeron{\gamma}; \sigma)  \) where \( \Gamma^{\gamma}_{\beta} \) is a computable functional \textemdash to disambiguate the \textit{function} \( \Gamma^{\gamma}_{\beta}(\sigma) \) is \( \zeron{\gamma} \) computable the \textit{functional} is computable.    

    \item \( \Gamma^{\gamma}_{\beta}(\cdot)  \) is an expansionary function on strings mapping \( T_{\gamma} \) to a subset of \( T_\beta \) with \( \Gamma^{\gamma}_{\gamma} \eqdef \id\restr{T_\gamma} \).   

    \item\label{def:tower:homo}   \( \Gamma^{\gamma}_{\beta}(\cdot) \) is a homeomorphism of \( [T_\gamma] \) and \( [T_\beta] \). 

    \item If \( \gamma' \in (\beta, \gamma) \) then \( \Gamma^{\gamma}_{\beta} = \Gamma^{\gamma'}_{\beta} \circ \Gamma^{\gamma}_{\gamma'}   \). 

\end{enumerate}  
\end{definition}

We will call the bottom tree in a tower the root of that tower and will say a tower has height \( \alpha \in \kleeneO \) (or \( \Ohgt{\alpha} \)) if \( I = \set{\beta \Oleq \alpha} \) and height \( < \alpha \) (or \( < \Ohgt{\alpha} \))  if  \( I = \set{\beta \Oless \alpha} \).  As there is no possibility of confusion,  we use height \( \wck \) and \( < \wck \) interchangeably.  This notation is convenient since we won't make any use of towers of height \( < \alpha \) except when \( \alpha = \wck \).   

\begin{definition}\label{def:g-beta}
Given a tower of trees with root \( T_0 \)  we define \( g^{\beta} \) to be the unique path in \( T_{\beta} \) with \(  \Gamma^{\beta}_{0}(g^{\beta}) = g = g^{0} \) where \( g \in [T_0] \).  
\end{definition}

The similarity between \( g^{\beta} \) and \( \jumpn{g}{\beta} \) is deliberate as \textemdash in the subgeneric towers we are interested in \textemdash  we will think of \( g^{\beta} \) as a version of \( \jumpn{g}{\beta}  \).  Since  \( \Gamma^{\beta}_{0} \) is a homeomorphism  all paths through \( T_\beta \) are of the form \( g^{\beta} \) for some \( \beta \).

\subsection{Subgeneric Towers}

  To use a tower of trees to produce trees whose paths are uniformly \( \alpha \)-subgeneric we need to ensure the tower has a number of extra properties.  If we merely wanted to build paths realizing the least possible \( \alpha \) jump it would be enough to ensure that all paths on \( T_{\beta} \) are \( \beta \)-generic in the local forcing on \( T_{\beta} \).  We can also ensure \( \jumpn{g}{\beta} \) doesn't compute any other paths by controlling \( T_{\beta} \).

However, ensuring that  \( g \meet \zeron{\beta} = \Tzero \) requires\footnote{Obviously, we could theoretically directly construct \( T_0 \) to have all the desired properties so it is only required given our desire to leave satisfying \( \zeron{\beta} \) properties to \( T_{\beta} \).} cooperation between the levels of the tower \textemdash otherwise we might have \( T_0 \) coding \( \zeroj \) into every path.  We address this by ensuring that \( T_0 \) gives \( T_1 \) the chance to choose a path whose image under  \( \recfnl{i}{}{} \) disagrees with any particular set \textemdash and \( T_1 \) does the same for \( T_2 \) and so on.  To help make this notion precise we offer the following definition.

\begin{definition}\label{def:splitting-path}
Given a functional or monotone function on strings \( \zeta \), a tree \( T \subset \wstrs \) and a path \( f \in [T] \) we say that \( T \) is \( \zeta \) splitting over \( f \) if every initial segment \( \sigma  \) of  \(  f \)  is extended by \( \zeta \)-splitting strings \( \tau_0, \tau_1  \) in \( T \) \textemdash where \( \tau_0, \tau_1 \) are  \( \zeta \) splitting (or \( i \)-splitting if \( \zeta \) has index \( i \)) if \( \zeta(\tau_0) \incompat \zeta(\tau_1) \). 
\end{definition}

We now specify the properties that will ensure that \( [T_0] \) is uniformly \( \alpha \)-subgeneric when it is part of a tower of height \( \alpha \).   In what follows we use \( I^{-} \) to denote the set of non-maximal elements in \( I \).

\begin{definition}\label{def:subgeneric-tower}
A tower which satisfies the following features for all  \( \beta \Oless \lambda \in I^{-} \) is called a subgeneric tower.

\begin{enumerate}
    \item\label{def:subgeneric-tower:jump} \( \displaystyle \zeron{\beta \Oadd 1} \Tplus g^{\beta \Oadd 1} \Tgeq \jump{\left(\zeron{\beta} \Tplus g^{\beta} \right)}  \)  \textemdash uniformly\footnote{By this we mean that there is a single functional that works for all paths in the relevant trees witnessing each reduction and that an index for the reduction is computable from \( \beta \).} in \( \beta \) and \( g^{\beta} \)

    \item\label{def:subgeneric-tower:non-trivial}  \(\displaystyle g^{\beta} \nTleq \zeron{\beta}  \)

    \item\label{def:subgeneric-tower:incompat}  \( \displaystyle g^{\beta} \neq h^{\beta}  \implies  g^{\beta} \nTleq \zeron{\beta} \Tplus  h^{\beta} \)

    \item\label{def:subgeneric-tower:splitting-subtree} Let \( T^{\alpha}_\beta \subset T_\beta \) be the image of \( T_{\alpha} \) under \( \Gamma^{\alpha}_\beta \).  If \( \Theta \) is a \( \zeron{\beta} \) computable functional and  \( T^{\alpha}_\beta \) is \( \Theta \) splitting over \( f^{\beta} \) for all \( \alpha \in [\beta,  \lambda) \) then \( T^{\lambda}_\beta \) is \( \Theta \) splitting over \( f^{\beta} \).      

    \item\label{def:subgeneric-tower:genericity}  If \( W \subset \wstrs \) is r.e. in \( \zeron{\beta} \) then all \( g^{\beta} \in [T_\beta] \) either meet \( W \)  or strongly avoid \( W \)  on \( T_\beta \), i.e.,   
    \[  \exists(\sigma \subfun g^{\beta})\bigl[\sigma \in W \lor \forall(\tau \supfun \sigma)\left(\tau \nin W \isect T_{\beta}\right) \bigr] \]

\end{enumerate}

\end{definition}

Note that, in part \ref{def:subgeneric-tower:splitting-subtree}, we always have \( f^{\beta} \in [T^{\alpha}_\beta] \) since \( \Gamma^{\alpha}_\beta \) is a homeomorphism of \( [T_{\alpha}] \) with  \( [T_{\beta}] \).  As we mentioned above, in this paper, we will only make use of towers where \( I \) has a maximal element or is a full path through \( \kleeneO \)  but it does raise the following question.

\begin{question}
If \( T_0 \) is the root of a subgeneric tower of height \( < \lambda \) for \( \lambda \in \kleeneO- \) must every \( g \in [T_0] \) be \( \lambda \)-subgeneric?     
\end{question} 

We hypothesize that \textemdash in contrast to the fact that any real \( n \)-generic for all \( n \in \omega \) is \( \omega \)-generic \textemdash  the answer is no for every limit notation \( \lambda \).  

\subsection{From Subgeneric Towers To Subgenerics}

It is finally time to connect subgeneric towers and subgenerics.   

\begin{lemma}\label{prop:subgeneric-tower-properties}
If \( T_0 \) is the root of a subgeneric tower of height at least \( \kappa \) then \( [T_0] \) is uniformly \( \kappa \)-subgeneric \textemdash including when \( \kappa = \wck \).      
\end{lemma}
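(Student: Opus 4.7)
The plan is to prove the lemma by transfinite induction on $\beta \Oleq \kappa$ (interpreting $\kappa = \wck$ as quantification over all $\beta \Oless \wck$ in the witnessing path), with the crucial intermediate claim being the uniform jump characterization
\begin{equation*}
    \jumpn{g}{\beta} \Tequiv \zeron{\beta} \Tplus g^{\beta} \quad\text{for all } g \in [T_0] \text{ and } \beta \Oleq \kappa,
\end{equation*}
with indices for both reductions computable from $\beta$.  The base case $\beta = 0$ is trivial since $g^0 = g$.  At successors $\beta = \gamma \Oadd 1$, part~\ref{def:subgeneric-tower:jump} of \cref{def:subgeneric-tower} supplies the forward direction, while the reverse follows because $g^{\gamma \Oadd 1}$ is the unique preimage of $g^{\gamma}$ under $\Gamma^{\gamma \Oadd 1}_\gamma$ in the $\zeron{\gamma \Oadd 1}$-computable tree $T_{\gamma \Oadd 1}$, enumerable from $\zeron{\gamma \Oadd 1} \Tplus g^\gamma$.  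At a limit $\lambda$, the expansionary nature of the $\Gamma^{\lambda}_\gamma$ (sending distinct length-$n$ strings to incompatible images) lets me identify $g^{\lambda}\restr{n}$ as the unique length-$n$ string in $T_\lambda$ whose $\Gamma^{\lambda}_\gamma$-image, for some sufficiently large $\gamma \Oless \lambda$, is a prefix of $g^\gamma$; since $\jumpn{g}{\lambda}$ uniformly computes $\zeron{\lambda}$, $T_\lambda$, the $\Gamma^{\lambda}_\gamma$, and each $g^\gamma$, this search gives $g^{\lambda} \Tleq \jumpn{g}{\lambda}$, with the converse immediate from the reductions above.

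With the jump characterization in place, most remaining clauses follow cleanly.  The characterization itself supplies \cref{eq:subgeneric:jump} and clause~\ref{def:uniformly-subgeneric:uniform} of \cref{def:uniformly-subgeneric}.  Non-triviality (\cref{eq:subgeneric:non-trivial}) is immediate from part~\ref{def:subgeneric-tower:non-trivial}: if $g \Tleq \zeron{\beta'}$ then $g^{\beta'} \Tleq \zeron{\beta'} \Tplus g \Tleq \zeron{\beta'}$, contradicting that part.  Incomparability (clause~\ref{def:uniformly-subgeneric:incomp}) follows from part~\ref{def:subgeneric-tower:incompat}: if $f \neq g$ in $[T_0]$ and $f \Tleq \zeron{\beta} \Tplus g^{\beta}$, then $f^{\beta} \Tleq \zeron{\beta} \Tplus f \Tleq \zeron{\beta} \Tplus g^{\beta}$ with $f^{\beta} \neq g^{\beta}$ (since $\Gamma^{\beta}_0$ is a bijection on paths by \cref{def:tower:homo}), contradicting part~\ref{def:subgeneric-tower:incompat}.

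The principal obstacle is the meet clause (\cref{eq:subgeneric:meet}): for $\beta' \Oless \beta \Oleq \kappa$ and any $A$ with $A \Tleq \jumpn{g}{\beta'} \Tequiv \zeron{\beta'} \Tplus g^{\beta'}$ and $A \Tleq \zeron{\beta}$, I need to conclude $A \Tleq \zeron{\beta'}$.  Writing $A = \Theta(g^{\beta'}; \zeron{\beta'})$ for a computable functional $\Theta$, I apply part~\ref{def:subgeneric-tower:genericity} to the $\zeron{\beta'}$-r.e.\ set of strings in $T_{\beta'}$ that admit $\Theta$-splittings above them.  Either $g^{\beta'}$ strongly avoids this set on $T_{\beta'}$---so $\Theta$ is eventually compatible above some prefix of $g^{\beta'}$ and $A \Tleq \zeron{\beta'}$---or $T_{\beta'}$ is $\Theta$-splitting over $g^{\beta'}$.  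In the latter case I would iterate part~\ref{def:subgeneric-tower:genericity} to secure $\Theta$-splittings \emph{along} $g^{\beta'}$ and invoke part~\ref{def:subgeneric-tower:splitting-subtree} to transfer these splittings through any intermediate limit $\lambda \in (\beta', \beta]$; a standard splitting-tree decoding then lets $\zeron{\beta'} \Tplus A$ compute some path $h \in [T_{\beta'}]$, which via $\Gamma^{\beta'}_0$ corresponds to some $f \in [T_0]$ with $f \Tleq \zeron{\beta}$, hence $f^{\beta} \Tleq \zeron{\beta} \Tplus f \Tleq \zeron{\beta}$ by the jump characterization---contradicting part~\ref{def:subgeneric-tower:non-trivial} at $\beta$.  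The $\wck$-subgeneric case follows by applying the above to each $\beta \Oless \wck$, which lies below some notation on the path $\kleeneO1$ witnessing the tower's height.  The hardest technical piece is ensuring the decoding in the splitting case produces a genuine path of $T_{\beta'}$ computable from $\zeron{\beta}$ (as opposed to only from $\zeron{\beta \Oadd 1}$ via König's lemma), which is precisely where the propagation of splittings through limits in part~\ref{def:subgeneric-tower:splitting-subtree} is needed.
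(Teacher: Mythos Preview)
Your handling of the jump characterization, non-triviality, and incomparability matches the paper's and is correct. The meet clause is where your approach diverges and has a genuine gap. In the splitting case you propose to run a ``standard splitting-tree decoding'' to compute a path $h \in [T_{\beta'}]$ from $\zeron{\beta'} \Tplus A$, then contradict non-triviality. But the hypothesis is only that $T_{\beta'}$ is $\Theta$-splitting \emph{over $g^{\beta'}$}: splittings are guaranteed only above initial segments of $g^{\beta'}$, so a decoding that follows the $A$-compatible side of each splitting pair may wander off $g^{\beta'}$ into a region with no further splittings and need not produce a path at all. Your invocation of part~\ref{def:subgeneric-tower:splitting-subtree} to ``transfer splittings through intermediate limits'' does not address this, since that clause propagates splittings upward into the subtrees $T^{\lambda}_{\beta'}$, not laterally throughout $T_{\beta'}$. (A minor point: the dichotomy you want holds by excluded middle; applying genericity at level $\beta'$ to the downward-closed set of strings admitting splittings only tells you the root admits splittings, not that every prefix of $g^{\beta'}$ does.)

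The paper sidesteps decoding by induction on the upper bound $\alpha$ (your $\beta$). If for some $\gamma \in [\beta', \alpha)$ the subtree $T^{\gamma}_{\beta'}$ (the image of $T_\gamma$ under $\Gamma^{\gamma}_{\beta'}$) fails to be $\Theta$-splitting over $g^{\beta'}$, then $A \Tleq \zeron{\gamma}$, and the inductive hypothesis with $\gamma$ in place of $\alpha$ gives $A \Tleq \zeron{\beta'}$. Otherwise part~\ref{def:subgeneric-tower:splitting-subtree} yields that $T^{\alpha}_{\beta'}$ is $\Theta$-splitting over $g^{\beta'}$; now apply genericity (part~\ref{def:subgeneric-tower:genericity}) at level $\alpha$, not $\beta'$, to the $\zeron{\alpha}$-r.e.\ set $\{\sigma \in T_\alpha : \Theta(\Gamma^{\alpha}_{\beta'}(\sigma)) \incompat A\}$. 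The splitting in $T^{\alpha}_{\beta'}$ shows $g^{\alpha}$ cannot strongly avoid this set, so $g^{\alpha}$ meets it, contradicting $\Theta(g^{\beta'}) = A$. The idea you are missing is that genericity must be invoked at the top level $\alpha$, where $A \Tleq \zeron{\alpha}$ makes the disagreement set r.e., rather than at $\beta'$ where $A$ need not be available.
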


We first identify a property sufficient to guarantee the truth of this claim.

\begin{lemma}\label{lem:subgeneric-criteria}
If \( T_\beta, \Gamma, I \) is a subgeneric tower of height \( \kappa \)  satisfying the following for all \( \beta \Oless \alpha \in I \) and  \( g, h \in [T_0] \)
\begin{enumerate}
    \item\label{prop:subgeneric-tower-properties:jump}   \( \displaystyle \jumpn{g}{\alpha} \Tequiv g \Tplus \zeron{\alpha} \Tequiv g^{\alpha} \Tplus \zeron{\alpha} \)  \textemdash uniformly in \( \alpha, g \)

    \item\label{prop:subgeneric-tower-properties:subg} \( \displaystyle X \Tleq \zeron{\alpha}  \land  X \Tleq \jumpn{g}{\beta}  \implies   X \Tleq \zeron{\beta} \) 

    \item\label{prop:subgeneric-tower-properties:incompat}  \( \displaystyle  g \nTleq \jumpn{h}{\beta}  \)      

\end{enumerate}
 then \( T_0 \) is uniformly \( \kappa \)-subgeneric \textemdash including when \( \kappa = \wck \).   
\end{lemma}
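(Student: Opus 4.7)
The plan is to verify the three clauses of Definition \ref{def:uniformly-subgeneric} in turn, using the three hypotheses together with the properties already built into a subgeneric tower. When $\kappa = \wck$, the path $I$ supplied by the tower will simultaneously serve as the path $\kleeneO1$ required by that definition; otherwise $I \supseteq \set{\beta \Oleq \kappa}$ already carries all the witnesses needed.

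First, I would show that every $g \in [T_0]$ is $\kappa$-subgeneric by checking each part of Definition \ref{def:subgeneric}. Equation \ref{eq:subgeneric:jump} is exactly property \ref{prop:subgeneric-tower-properties:jump} of our hypothesis after discarding the $g^\alpha$ middle term. For \cref{eq:subgeneric:meet}, fix $\beta' \Oless \beta \Oleq \kappa$; the inclusion $\zeron{\beta'} \Tleq \jumpn{g}{\beta'} \Tmeet \zeron{\beta}$ is immediate, and for the reverse direction any $X$ below both sides falls under the hypothesis of property \ref{prop:subgeneric-tower-properties:subg} with $\alpha := \beta$, yielding $X \Tleq \zeron{\beta'}$. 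For \cref{eq:subgeneric:non-trivial}, I would argue by contradiction: if $g \Tleq \zeron{\beta'}$ then two applications of property \ref{prop:subgeneric-tower-properties:jump} give
\[ g^{\beta'} \Tplus \zeron{\beta'} \Tequiv \jumpn{g}{\beta'} \Tequiv g \Tplus \zeron{\beta'} \Tequiv \zeron{\beta'}, \]
forcing $g^{\beta'} \Tleq \zeron{\beta'}$ and contradicting clause \ref{def:subgeneric-tower:non-trivial} of the subgeneric tower (using that $\beta'$ lies strictly below a non-maximal element of $I$ once the tower has height at least $\kappa$).

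Second, for the uniformity clause of Definition \ref{def:uniformly-subgeneric}, I would simply read off the required computable functional $\Theta$ from the uniformity already asserted in property \ref{prop:subgeneric-tower-properties:jump}; no additional work is needed. Third, for the incomparability clause, given $f \neq g$ in $[T_0]$ and $\beta \Oless \kappa$, property \ref{prop:subgeneric-tower-properties:incompat} immediately yields $f \nTleq \jumpn{g}{\beta}$.

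The only step requiring real care is the non-triviality argument: a naive attempt to feed $X := g$ into property \ref{prop:subgeneric-tower-properties:subg} does not conclude, so one has to route through the identification $\jumpn{g}{\beta'} \Tequiv g^{\beta'} \Tplus \zeron{\beta'}$ supplied by property \ref{prop:subgeneric-tower-properties:jump} and invoke the subgeneric tower's own non-triviality. Everything else is genuinely just unwinding definitions, so the proof should be quite short.
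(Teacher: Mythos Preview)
Your proposal is correct and follows essentially the same route as the paper: both verify the subgenericity clauses directly from hypotheses (1) and (2), read uniformity and incomparability off hypotheses (1) and (3), and handle non-triviality by using hypothesis (1) to pass from \( g \Tleq \zeron{\beta'} \) to \( g^{\beta'} \Tleq \zeron{\beta'} \) and then invoking clause \ref{def:subgeneric-tower:non-trivial} of the tower definition. Your write-up is in fact more explicit than the paper's (which compresses all of this into a few sentences), and your care about \( \beta' \) lying below a non-maximal element of \( I \) is a detail the paper silently passes over.
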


\begin{proof}
To see that \( g \) is \( \alpha \)-subgeneric for \( \alpha \in I \)  observe that by part \ref{def:subgeneric-tower:non-trivial} of \cref{def:subgeneric-tower} we already know that \( g^{\beta} \nTleq \zeron{\beta} \) and the first claim entails that \( g \nTleq \zeron{\beta} \) and the first two claims above correspond to the other two requirements for \( \alpha \)-subgenericity.  The uniformity required for \( T_0 \) to be uniformly \( \alpha \)-subgeneric is guaranteed by the first claim and the only other property we need is the last claim above.  This is enough to verify the lemma when \( \kappa < \wck \) and if \( \kappa = \wck \) then \( I \) is a path through \( \kleeneO \) and the  above argument shows we satisfy the definition of uniform \( \alpha \)-subgenericity for all \( \alpha \in I  \) \textemdash just as uniform \( \wck \)-subgenericity requires.  
\end{proof}

We now prove \cref{prop:subgeneric-tower-properties} by verifying the properties from \cref{lem:subgeneric-criteria} hold for any subgeneric tower of height \( \kappa \).

\begin{proof}
We first observe that the third claim in \cref{lem:subgeneric-criteria}  is immediate from the first claim and the fact that \( g^{\beta} \nTleq h^{\beta} \Tplus \zeron{\beta} \).  Thus, it is enough to prove the first two claims.

To prove claim (\ref{prop:subgeneric-tower-properties:jump}) suppose, for the sake of contradiction, that \( \alpha \) is a \( \Oless \) minimal counterexample.  If \( \alpha  \) was a successor then the claim would follow from the inductive assumption plus part \ref{def:subgeneric-tower:jump} of \cref{def:subgeneric-tower}.   So suppose that \( \alpha \) is a limit notation.  As \( \Gamma^{\alpha}_{\Olim{\alpha}{n}}  \) is uniformly \(  \zeron{\alpha} \) computable we have \( \zeron{\alpha} \Tplus g^{\alpha} \Tgeq \jumpn{g}{\Olim{\alpha}{n}} \) uniformly.  As \( \jumpn{g}{\alpha} = \Union_{n \in \omega} \jumpn{g}{\Olim{\alpha}{n}} \) this guarantees that \(  \zeron{\alpha} \Tplus g^{\alpha} \Tgeq \jumpn{g}{\alpha} \).  The other direction is trivial and the equivalence of \( \zeron{\alpha} \Tplus g^{\alpha}  \) with \( \zeron{\alpha} \Tplus g  \) is immediate from the fact that \( \Gamma^{\alpha}_{0} \Tleq \zeron{\alpha} \).  This yields the desired equivalence and the uniformity follows from the uniformity of the computations used in this proof.

This leaves only claim (\ref{prop:subgeneric-tower-properties:subg}) to prove.  Assume, by way of induction, that whenever \( \beta \Oleq \gamma \Oless \alpha \) and \( X  \)  is computable in both \(  \zeron{\gamma} \) and \( \jumpn{g}{\beta} \) then \( X \Tleq \zeron{\beta} \) \textemdash allowing \( \beta = \gamma \) makes no difference as this case is trivial.  We suppose \( X \) is computable in both \( \zeron{\alpha} \) and  \( \jumpn{g}{\beta} \) and prove \( X \Tleq \zeron{\beta} \).  

Since \( \zeron{\beta} \Tplus g^\beta \Tequiv  \jumpn{g}{\beta}  \) there is some \( \zeron{\beta} \) computable functional \( \zeta \) with \( \zeta(g^{\beta}) = X \).  First, suppose there is some \( \gamma \in [\beta, \alpha) \) with \( T^{\gamma}_\beta \) (the image of \( T_{\gamma} \) under \( \Gamma^{\gamma}_\beta \)) is not \( \zeta \) splitting over \( g^{\beta} \).  In this case, we can compute \( X \) from \( \zeron{\gamma} \) as there is some \( \sigma \subfun g^{\beta} \) lacking any extension \( \tau \in T^{\gamma}_\beta \Tleq \zeron{\gamma} \) with \( \zeta(\tau) \) incompatible with \( X \).  Thus, by the inductive hypothesis, we can infer that \( X \Tleq \zeron{\gamma} \Tleq \zeron{\alpha} \). 

Thus, we can assume, that for every \( \gamma \in [\beta, \alpha) \) the tree \( T^{\gamma}_\beta \) is \( \zeta \) splitting over \( g^{\beta} \).  By part \ref{def:subgeneric-tower:splitting-subtree} of the definition of a subgeneric tower it follows that \( T^{\alpha}_\beta \) is \( \zeta \) splitting over \( g^{\beta} \).  Thus, the \( \zeron{\alpha} \) computable set  
\[ \set{\sigma}{\sigma \in T_{\alpha} \land \zeta\left(\Gamma^{\alpha}_{\beta}(\sigma)\right) \incompat X } \]          
isn't strongly avoided by \( g^{\alpha} \) on \( T_{\alpha} \) so \textemdash by the genericity requirement (part \ref{def:subgeneric-tower:genericity}) for a subgeneric tower \textemdash \( g^{\alpha} \) must meet this set.  Thus, \( \zeta(g^{\beta}) =  \zeta \mathbin{\circ} \Gamma^{\alpha}_{\beta}(g^{\alpha}) \neq X \) contradicting our assumption and completing our proof.
\end{proof}

This lemma reduces proving \cref{thm:standard} and \cref{thm:non-standard} to the construction of subgeneric towers with the appropriate properties.  In \cref{sec:standard-tower}  we will prove \cref{thm:standard}  by constructing subgeneric towers of any height \( \alpha \in \kleeneO \).  In \cref{sec:non-standard-tower} we apply the construction in \cref{sec:standard-tower}  to a `non-standard' recursive ordinal to build a fully subgeneric tower and prove \cref{thm:non-standard}.  We will therefore sometimes call subgeneric towers of height \( \alpha \in \kleeneO \) standard subgeneric towers and subgeneric towers of height \( \wck \) as non-standard subgeneric towers\footnote{Morally speaking, the non-standard subgeneric tower is the result of applying the construction to the `non-standard' recursive ordinal but only the well-founded initial segment of this object is a subgeneric tower as we must have \( I \subset \kleeneO \).}.

\section{Constructing Subgeneric Towers}\label{sec:standard-tower}

In this section, we construct a subgeneric tower of any height \( \alpha \in \kleeneO \).   Our strategy for building a subgeneric tower of height \( \alpha \) will be to start with \( T_{\alpha} \) and then define every \( T_{\beta}, \beta \Oless \alpha \) from \( T_{\alpha} \).  We will build \( T_{\beta} \) as a kind of garbled copy of  \( T_{\beta \Oadd 1} \).  Assuming we knew how to do this correctly we could  build finite height subgeneric towers but how can we extend this approach to limit levels?  If \( T_n \) is defined by (in some sense) copying \( T_{n+1} \) how can we also ensure \( T_n \) has the `same' paths as \( T_{\omega} \)?  

The basic idea is that we build \( T_{n} \) to satisfy \( T_{n}\restr{n} = T_{\omega}\restr{n}  \).  Since we are insisting that \( T_{n+1}\restr{n+1} = T_{\omega}\restr{n+1}  \) we will have  \( T_{n}\restr{n} = T_{n+1}\restr{n} = T_{\omega}\restr{n}  \) and we will build \( T_n \) from \( T_{n+1} \) above each \( \sigma \in T_{\omega} \) of length  \( n  \).  This turns out to completely define each \( T_n \) since we can project strings of length \( l \) down from \( T_{l}\restr{l} = T_{\omega}\restr{l}  \) and will guarantee that each \( [T_n] \) is homeomorphic to \( T_{\omega} \) \textemdash provided we construct \( T_n \) from \( T_{n+1} \) appropriately.         

Extending this idea beyond \( \omega \) requires a bit of attention to detail to ensure that the demands imposed by all the different limit ordinals are both compatible and give rise to a uniquely defined tower \textemdash and we will explore this in \cref{ssec:build-std-tower} before using the recursion theorem to simultaneously construct the whole subgeneric tower.  However, this leaves us with the construction of \( T_{\beta} \) from \( T_{\beta \Oadd 1} \) as the primary barrier we face.  We tackle that barrier now by proving the key lemma of the paper.

\subsection{Pulldown Lemma}\label{sec:pulldown-lemma}

Recall that our construction of \( T_\beta \) from \( T_{\beta \Oadd 1} \) will need to pass along to \( T_{\beta \Oadd 1} \) strings which produce disagreeing computations for  \( \zeron{\beta} \) computable functionals \textemdash part \ref{def:subgeneric-tower:splitting-subtree} of \cref{def:subgeneric-tower}.    This will be easier to do if we assume that infinitely often we know that both \( \sigma\concat[0] \) and \( \sigma\concat[1] \) are in   \( T_{\beta \Oadd 1} \) so we can map them to the  disagreeing computations.

\begin{definition}\label{def:nice-tree}
A tree \( T  \) is nice if whenever \( \lh{\sigma} \equiv 3 \pmod{4} \) then \( \sigma \in T \) iff \( \sigma^{-} \in T \) and \( \sigma = \sigma^{-}\concat[0] \) or \( \sigma = \sigma^{-}\concat[1] \).   
\end{definition} 

We can now state the main combinatorial lemma for this section.

\begin{lemma}\label{lem:tree-minus-one} 
Given a nice tree \( T' \Tleq \jump{X} \) and \( l \in \omega \) such that \( T'\restr{l} \Tleq X \) there is a tree \( T \) and a function \( \Gamma \Tleq \jump{X} \) such that   
\begin{enumerate}
    \item\label{lem:tree-minus-one:comp} \(  T \) is uniformly computable in \( X \)
    \item\label{lem:tree-minus-one:nice}   \( T \) is nice.
    \item\label{lem:tree-minus-one:copy}  \( T\restr{l} = T'\restr{l} \) and \( \Gamma \) is the identity on \( T'\restr{l} \).  
    \item\label{lem:tree-minus-one:expansionary} \( \Gamma \) is expansionary, indeed \( \Gamma(\sigma\concat[x]) \supfun \Gamma(\sigma)\concat[x] \) whenever \( \Gamma(\sigma\concat[x])\conv  \).   
    \item\label{lem:tree-minus-one:dom-zeta} \( \dom \Gamma = T' \) 
    \item\label{lem:tree-minus-one:homo} \( \Gamma \) is a homeomorphism of \( [T'] \) and \( [T] \).
    \item\label{lem:tree-minus-one:subg}  If \( g \in [T] \) then \( g \) either meets or strongly avoids on \( T \) every \( X \)-r.e.  set of strings.   
    \item\label{lem:tree-minus-one:jump-inv} If \( f \in [T] \) then (uniformly)  \( \jump{(f \Tplus X)} \Tequiv f \Tplus \jump{X} \Tequiv \hat{f} \Tplus \jump{X} \) where \( \hat{f} \) is the unique path through \( [T'] \) such that \( \Gamma(\hat{f}) = f \).   
    \item\label{lem:tree-minus-one:path-noncomp} \( g \in [T] \implies g \nTleq X \) 
    \item\label{lem:tree-minus-one:pair-noncomp}  \( g, h \in [T] \land g \neq h \implies g \nTgeq h \Tplus X \) 
    \item\label{lem:tree-minus-one:splitting} If \( 4i + 2 > l \),  \( \sigma \in T' \) and \( \lh{\sigma} = 4i + 2 \) then either \( \Gamma(\sigma\concat[0]), \Gamma(\sigma\concat[1]) \) are \( i \)-splitting or no extensions of \( \Gamma(\sigma) \) in \( T \) are \( i \)-splitting.              
\end{enumerate} 
Indexes for \( T \) and \( \Gamma \)  are computable from \( l \) and indexes \( e, e_0  \) for \( T' \) and \( T'\restr{l} \) respectively.  Moreover, if \( e_0 \) is a code for a nice \( X \)-computable tree \( T'\restr{l} \) of height at most \( l \) and we define \( T' \) to be the largest nice tree\footnote{But for the need to be nice, it would be enough to say \( \sigma \) is in \( T' \) iff every \( \sigma' \subfun \sigma \) is either in \( T'\restr{l} \) or of length greater than \( l \) and  \( \recfnl{e}{\jump{X}}{\sigma}\conv = 1 \).  We can ensure niceness by also removing all strings \( \sigma \) such that for some \( x \) with \( 2x+1 > l \),  \( \sigma\restr{2x+1} \neq \sigma^{-}\concat[i], i \leq 1 \) or \( \sigma^{-}\concat[1 -1] \nin T' \).} extending \( T'\restr{l} \) satisfying \( \sigma \in T' \setminus T'\restr{l} \implies \recfnl{e}{\jump{X}}{\sigma}\conv = 1 \) then \( T, \Gamma \) (as constructed from \( e_0, e, l \)) still satisfy the statement of the lemma subject to the caveat that \( \Gamma \) may merely be r.e. in \( \jump{X} \) if \( \recfnl{e}{\jump{X}}{} \)  is partial.

\end{lemma}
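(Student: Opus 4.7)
The plan is to construct the monotone function $\Gamma$ stage by stage\textemdash with $T$ defined, per \cref{lem:expan-homeo}, as the downward closure of $\rng \Gamma$\textemdash so that the construction is $X$-computable but guided by the $\jump{X}$-computable tree $T'$. Using the limit lemma, fix an $X$-computable approximation $T'_s \to T'$ and process strings of $T'$ in order of length. Set $\Gamma$ to the identity on $T'\restr{l}$ for clause~\ref{lem:tree-minus-one:copy}. For each $\sigma \in T'$ at which $\Gamma(\sigma)$ has been defined, and each $x$ with $\sigma\concat[x] \in T'$, extend $\Gamma(\sigma)\concat[x]$ by a padding block of a fixed multiple of $4$ bits; individual positions inside the block are partitioned by residue mod $4$ into roles (splitting, strong avoidance, jump-inversion coding, niceness), and at each niceness position the construction always allows both values $0$ and $1$, which is precisely what is needed to make $T$ nice.

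Within each padding block the tasks are handled as follows. For clause~\ref{lem:tree-minus-one:splitting}, at length $4i+2$ in $T'$ (where niceness of $T'$ already guarantees $\sigma\concat[0], \sigma\concat[1] \in T'$) use $\jump{X}$ to ask whether there exist extensions of $\Gamma(\sigma)$ in $T$ on which $\recfnl{i}{\cdot}{}$ takes incompatible values; if yes, fix two such splitting extensions as the padded images of $\sigma\concat[0]$ and $\sigma\concat[1]$, and if no, the same query certifies that no $i$-splitting lies above $\Gamma(\sigma)$. For clause~\ref{lem:tree-minus-one:subg}, handle each $X$-r.e.\ set $W_i$ on a dedicated position by searching via $\jump{X}$ for an extension meeting $W_i$ and taking it if found, otherwise padding into strings whose continuations in $T$ strongly avoid $W_i$. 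Clauses~\ref{lem:tree-minus-one:path-noncomp} and~\ref{lem:tree-minus-one:pair-noncomp} then follow from~\ref{lem:tree-minus-one:subg} applied to the $X$-r.e.\ set of strings disagreeing with a fixed $\recfnl{i}{X}{}$ or with $\recfnl{i}{h}{}$ for a second path $h$, respectively. For clause~\ref{lem:tree-minus-one:jump-inv}, dedicate a bit of each block to coding whether the stage-$s$ approximation has stabilised at $\sigma$; this lets $f \Tplus \jump{X}$ recover $\hat{f}$ and hence $\jump{(f \Tplus X)}$, while the reverse inequality is immediate since $\Gamma \Tleq \jump{X}$ and $T \Tleq X$.

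The principal obstacle is making clause~\ref{lem:tree-minus-one:splitting} compatible with the $X$-computability of $T$ required by~\ref{lem:tree-minus-one:comp}: a naive reading of the query ``does $T$ contain $i$-splitting extensions of $\Gamma(\sigma)$?'' is about the very tree being constructed. The resolution is to rephrase it as a $\sigmazn{1}(\jump{X})$ statement about $T'$ and $\Gamma$, namely the existence of $\tau_0, \tau_1 \supfun \sigma$ in $T'$ with $\recfnl{i}{\Gamma(\tau_0)}{}, \recfnl{i}{\Gamma(\tau_1)}{}$ incompatible. Since $T'$ and $\Gamma$ are $\jump{X}$-computable this question is answered by $\jump{X}$, and its answer drives the branch we commit to; the remaining indexing and uniformity claims are bookkeeping effective in $e, e_0, l$. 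The ``moreover'' at the end of the lemma\textemdash where $T'$ is defined from a possibly partial $\jump{X}$-functional via niceness\textemdash is handled by running the same construction against the partial approximation, with the only caveat that the splitting search may fail to terminate and so $\Gamma$ becomes r.e.\ in $\jump{X}$ rather than $\jump{X}$-computable, exactly as the statement allows.
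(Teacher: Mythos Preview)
Your proposal contains a central inconsistency regarding clause~\ref{lem:tree-minus-one:comp}. You announce an $X$-computable construction with $T$ the downward closure of $\rng \Gamma$, but then repeatedly build $\Gamma$ by consulting $\jump{X}$ (``use $\jump{X}$ to ask whether there exist extensions\ldots'', ``searching via $\jump{X}$ for an extension meeting $W_i$''). Any construction that asks $\jump{X}$ a question and branches on the answer is a $\jump{X}$-computable construction, so $\rng\Gamma$ and its downward closure are only r.e.\ in $\jump{X}$, not computable in $X$. Your final paragraph names exactly this obstacle but does not resolve it: rephrasing the splitting question as $\Sigma_1$ over $\jump{X}$ is irrelevant to whether the construction of $T$ is $X$-computable. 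The paper's solution is different in kind: the stagewise construction of $T_s,\Gamma_s$ never queries $\jump{X}$. It acts only on stage-$s$ evidence (witnesses currently visible in $T_s$, in the approximation $T'_s$, in $\REset[s](X){i}$), accepting that $\Gamma_s(\sigma)$ may later be injured and redefined when $T'_s$ changes or a better witness appears. A deadline rule, $\tau \in T \iff \tau \in T_{\godelnum{\tau}+1}$, then makes $T$ computable in $X$, while $\Gamma=\lim_s\Gamma_s$ is $\jump{X}$-computable by the limit lemma. This is a finite-injury argument, not an oracle construction, and that distinction is the whole point of the lemma.

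There is a second gap in your treatment of clause~\ref{lem:tree-minus-one:pair-noncomp}. You claim it follows from genericity applied to ``the $X$-r.e.\ set of strings disagreeing with $\recfnl{i}{h}{}$ for a second path $h$'', but $h$ is another path through the tree under construction, not an $X$-computable object, so no such $X$-r.e.\ set exists. The paper needs a separate family of requirements $R_i$ for this, and they require cooperation between nodes: a node $\sigma$ trying to make $\recfnl{i}{\Gamma(\sigma)\Tplus X}{}$ disagree with every other path is assisted by lower-priority nodes $\xi$, which extend $\Gamma_s(\xi)$ to capture a disagreement whenever one is available. Showing that this process terminates (so that $\Gamma(\sigma)$ converges) is the most delicate part of the verification and cannot be absorbed into the genericity clause.
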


 In the moreover claim, we are showing that the construction fails gracefully even when given a badly behaved index \textemdash a fact we will need later to ensure that when we use the recursion theorem to build a subgeneric tower we avoid trivial solutions.  Note that we will always have  \( T' = \set{\sigma}{\recfnl{e}{\jump{X}}{\sigma}\conv = 1} \) whenever that set is a nice tree extending  the \( X \)-computable tree with index \( e_0 \) we denoted by \( T'\restr{l} \).

\subsection{Construction Overview}

 The construction will proceed in stages and we will build \( \Gamma, T \) as the limits of \( X  \) computable sequences of stagewise approximations \( \Gamma_s, T_s \).  Once enumerated into \( T \) strings will never leave so \( T_s \) will always be the set of strings enumerate before stage \( s \).  We will ensure that \( T \) is \( X \) computable whenever \( T'\restr{l} \Tleq X \)  by permanently excluding any \( \tau \) from \( T \) that isn't enumerated by the end of stage \( \godelnum{\tau} \), i.e., \( \tau \in T \iff \tau \in T_{\godelnum{\tau} + 1} \).  We ensure that \( \rng \Gamma \subset T \) by enumerating every \( \tau' \subfun \tau \) into \( T\) whenever we set  \( \Gamma_s(\sigma) = \tau \).  Since \( T'\restr{l} \Tleq X \) we can also stipulate that \( \Gamma_s \) is the identity on \( T'\restr{l}  \)  and undefined on all strings of length at most \( n \) not in \( T'\restr{l} \).       

 At the outset of the construction we fix an \( X \) computable approximation \( T'_s \)  to \( T' \) such that \( T'_s \) is always finite, downward closed and \( \lim_{s \to \infty} T'_s(\sigma) \) exists  whenever \( \recfnl{e}{\jump{X}}{\sigma}\conv  \) (where we assume this is a \( 0,1 \) valued function).  We can also assume that our approximations are always correct on strings of length at most \( l \), i.e., \( T\restr{l} = T_s\restr{l} = T'_s\restr{l} = T'\restr{l}  \).   \( \Gamma_s(\sigma) \) is the identity on \( T'\restr{l}  \) and undefined on all strings of length at most \( n \) not in \( T'\restr{l} \).       

\subsubsection{Niceness}\label{ssec:niceness}

We need to build \( T \) to be nice.  This is relatively trivial to do simply by placing \( \sigma\concat[0], \sigma\concat[1] \) into \( T \) whenever we place \( \sigma \) into \( T \) and \( \lh{\sigma} = 4k + 2 \geq l \) \textemdash since \( T'\restr{l} \) is also assumed to be nice the constraint that \( T\restr{l} = T'\restr{l} \) won't be an issue.   However, the construction will be complicated enough without breaking up every part of the construction into two cases depending on whether the length mod \( 4 \) of strings in   \( T \).  Therefore, we simply assume that the construction we give below is (essentially) describing what happens on strings whose length isn't \( 3 \pmod{4} \) and what happens on those strings is filled in behind the scenes with \( \Gamma \) always following the \( 0 \) branch by default.  Indeed, the only reason we can't just modify \( T \) to be nice  after the fact is that we need to include these strings in \( T_s \) when we ask if there is some way to extend \( \Gamma_{s}(\tau) \) to have some desired property.

\subsubsection{Requirements}

To understand the construction it will help to keep in mind that if \( g \in [T'] \) then we will end up defining  
\[ \Gamma(g) = \sigma_0\concat[g(0)]\concat\sigma_1\concat[g(1)]\concat\sigma_2 \ldots \concat\sigma_{k}\concat[g(k)]\concat\sigma_{k+1} \ldots  \] 
where the strings \( \sigma_i \) are the results of letting  \( \Gamma_{s+1}(\tau) \) properly extend  \( \Gamma_s(\tau) \) when we see some string \( \sigma' \supfun  \Gamma_s(\tau) \) in \( T_s \) which will help us meet various  requirements, e.g., trying to meet \( X \)-r.e. sets of strings.   We now list those requirements.

\begin{requirements}
\require{G}{i} \exists(\sigma \subfun g)\left[ \Gamma(\sigma) \supfun \REset(X){i} \lor \forall({\tau \supfun \Gamma(\sigma)})\left(\tau \nin T \lor \tau \nsupfun \REset(X){i}  \right)  \right] \\[.5em]
\require{S}{i} \begin{aligned}
(\exists \sigma &\subfun g)\Bigl[\recfnl{i}{\Gamma(\sigma\concat[0]) \Tplus X}{} \incompat \recfnl{i}{\Gamma(\sigma\concat[1]) \Tplus X}{} \lor \phantom{X} \\
& \forall({\tau_0, \tau_1 \supfun \Gamma(\sigma)})\bigl(\tau_0, \tau_1 \in T \implies \recfnl{i}{\tau_0 \Tplus X}{} \compat \recfnl{i}{\tau_1 \Tplus X}{}   \bigr) \Bigr]
\end{aligned} \\[.5em]
\require{R}{i}  g \neq f \implies \recfnl{i}{X \Tplus \Gamma(f)}{}\diverge  \lor \recfnl{i}{X \Tplus \Gamma(f)}{} \incompat \Gamma(g)
\end{requirements}

When defined \( \Gamma(\sigma) \) will be tasked with meeting a single requirement determined by the length of \( \sigma \).  Even though we only work to meet a single requirement for each string \( \sigma \) the construction has the form of a finite injury argument since anytime we redefine \( \Gamma \) on some substring of \( \sigma \) we are forced to abandon our definition of \( \Gamma(\sigma) \).

\subsection{Stagewise Operation}

 We break each stage up into two parts.  In the first part, we try to satisfy the explicit requirements by searching for appropriate extensions.  In the second part we work satisfy the implicit requirement that \( \dom \Gamma = T' \).  As we can only consider finitely many strings we say that \( \sigma \) gets attention at stage \( s = \godelnum{\tau} \) if \( \lh{\sigma} > l \) and \( \Gamma_s(\sigma)\conv \subfun \tau \).  Note that if \( \tau \) fails to enter \( T \) during this stage it is permanently excluded.

\subsubsection{First Half of Stage}\label{ssec:first-half-stage}

In the first half of the stage we start with the shortest string \( \sigma \)  receiving attention and execute the first case below which applies to \( \sigma \).  As directed below, we then repeat this process in order of increasing length with each string receiving attention until we either run out of strings or set \( \Gamma_{s+1}(\sigma) \neq \Gamma_s(\sigma) \) for some \( \sigma \) at which point we move on to the second half of the stage.

In all cases,  when we say set \( \Gamma_{s+1}(\sigma) \) to extend \( \nu \) we mean to define  \( \Gamma_{s+1}(\sigma) - \nu'\concat[k] \) where \( \nu' \) is a maximal extension of \( \nu  \) in \( T_s \) and \( k \) is chosen large.  Choosing \( k \) large  ensures that we haven't already committed to keeping  \( \Gamma_{s+1}(\sigma) \) \textemdash or any of it's extensions \textemdash out of \( T \).

\begin{pfcases*}
\case[\( \sigma \nin T'_{s+1} \)] Set \( \Gamma_{s+1}(\sigma) = \diverge \) and move on to the second half of the stage.

\case[\(\lh{\sigma} = 4i   \)] (\req{R}{i})  We first check if we need to assist any higher priority (here meaning permanently entering \( T' \) before \( \sigma \)) strings \( \upsilon \)  trying to meet some \req{R}{i'} requirement.  We can't know for sure when strings enter \( T' \) but we can say that \( \upsilon \) appears to enter \( T' \) at stage \( t \) if \( t \) is the least stage such that \( \tau \in T'_{s'} \) for all \( s' \in [t, s] \).    Now let \( \upsilon \in T'_s \) be the string (if any) which appears to enter \( T' \)  at the least stage  and before \( \sigma \) appears to enter \( T' \) such  that for some \( i' \),  \( \lh{\upsilon}= 4i'  > l  \) and 
\begin{align}
\lnot &\Psi(\Gamma_{s}(\sigma), \upsilon) \land \exists(\tau \supfun \Gamma_{s}(\sigma))\left(\tau \in T_s \land \Psi(\tau, \upsilon)\right) \label{eq:help-upsilon} \\
& \text{where } \Psi(\tau, \upsilon) \iffdef  \exists({\upsilon^{*} \supfun \Gamma_{s}(\upsilon)\conv}) \left(\upsilon^{*} \in T_s \land  \recfnl[s]{i'}{\upsilon^{*} \Tplus X}{} \incompat \tau\right) \nonumber 
\end{align} 
If there is such an \( \upsilon \)  let \( \tau \supfun \Gamma_{s}(\sigma) \) satisfy \( \Psi(\tau, \upsilon)  \) and set \( \Gamma_{s+1}(\sigma) \) to an extension of \( \tau \) and continue on to the second half of the stage.  Otherwise, check if 
\begin{equation*}
\begin{split}
\exists(\xi \in T'_s)\Bigl(&\xi \incompat \sigma \land \Gamma_s(\xi)\conv \supfun \recfnl[s]{i}{\Gamma_s(\sigma) \Tplus X}{} \supfunneq \Gamma_s(\xi^{-})  \land \\  
                            &\exists(\tau \supfun \Gamma_{s}(\sigma))\left(\tau \in T_s \land \recfnl[s]{i}{\tau \Tplus X}{} \incompat \xi \right)\Bigr)
\end{split}
\end{equation*}   
If so set \( \Gamma_{s+1}(\sigma) \) to an extension of \( \tau \) where \( \tau \)  and continue on to the second half of the stage otherwise do nothing. 

\case[\( \lh{\sigma} = 4i + 1 \)] (\req{G}{i})  If \( \Gamma_{s}(\sigma) \supfun \REset[s](X){i}  \) do nothing.  Otherwise check if there is some  \( \nu  \supfun \Gamma_{s}(\sigma) \) in \( T_s \isect \REset[s](X){i} \).  If so set \( \Gamma_{s+1}(\sigma) \) to extend \( \nu \). 

\case[\( \lh{\sigma} = 4i + 2 \)] (\req{S}{i})  Check if there are \( i \)-splitting strings (converging in \( s \)-steps)  \( \tau_0 \leftof \tau_1 \in T_s \) extending \( \Gamma_s(\sigma) \).    If not or \( \Gamma_s(\sigma\concat[0]) \) and \( \Gamma_s(\sigma\concat[1]) \) are already seen to be \( i \)-splitting in \( s \)-steps do nothing.  If so, set \( \Gamma_{s+1}(\sigma) \) equal to the longest common initial segment of \( \tau_0 \) and \( \tau_1 \) (to maintain \( \Gamma \) as an expansionary function) and set  \( \Gamma_{s+1}(\sigma\concat[j]) \) to extend \( \tau_j \).

\end{pfcases*}

\subsubsection{Second Half of Stage}

Let \( \sigma \) be the \( \subfun \) maximal element such that \( \Gamma_{s+1}(\sigma)\conv \subfunneq \tau \) and \( \lh{\sigma} \geq l \) and let \( x \) be such that \(  \Gamma_{s+1}(\sigma)\concat[x] \subfun \tau \).    If there is no such \( \sigma \)  end the stage.   

If \( \Gamma_{s+1}(\sigma)\concat[x] = \tau \) and \( \lh{\sigma} \equiv 2 \pmod{4} \) or \( x \leq 1 \) then enumerate \( \tau \) into \( T \).  This will ensure that even if we don't see \( \sigma\concat[x] \) enter \( T' \) until much later we haven't committed to keeping \( \Gamma_{s+1}(\sigma)\concat[x] \) out of \( T \).  

Finally, if \( \sigma\concat[x] \in T'_s \) but either \( \Gamma_{s}(\sigma\concat[x])\diverge \) or we explicitly marked \(  \Gamma_{s+1}(\sigma\concat[x]) \) as undefined earlier in the stage \textemdash possibly because \( \Gamma_{s+1}(\sigma) \neq \Gamma_s(\sigma) \) \textemdash set \( \Gamma_{s+1}(\sigma\concat[x]) = \tau\concat[k] \) for some large \( k \).  Note that, in this case we explicitly \textbf{don't} set  \( \Gamma_{s+1}(\sigma\concat[x]) \) to extend some maximal element in \( T_s \) extending \( \tau \).

\subsection{Verification}

We now argue that this construction vindicates  \cref{lem:tree-minus-one}.  We start with a utility lemma

\begin{lemma}\label{lem:inf-attention}
If \( \Gamma(\sigma)\conv \) then \( \sigma \) receives attention infinitely often.
\end{lemma}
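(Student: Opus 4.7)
The plan is brief since this is a bookkeeping fact. Write $v = \Gamma(\sigma)$ and recall that $\Gamma$ is the limit of the $X$-computable approximation $\Gamma_s$, in which $\Gamma_s(\sigma)$ is either divergent or a single finite string. Convergence $\Gamma(\sigma) \conv = v$ therefore amounts to eventual equality: there is a least stage $s_0$ after which $\Gamma_s(\sigma) = v$ for every $s \geq s_0$. I would also take for granted the implicit restriction $\lh{\sigma} > l$, since the very definition of ``attention'' demands $\lh{\sigma} > l$ and the downstream uses of the lemma will concern only strings above $T'\restr{l}$.

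Next I would exhibit infinitely many stages of attention. The set $\{\tau \in \wstrs : \tau \supfun v\}$ is infinite, and the coding is injective with $\godelnum{}$ unbounded on this set, so $\{\godelnum{\tau} : \tau \supfun v\} \isect [s_0, \infty)$ is infinite. Fix any such $\tau$ and put $s = \godelnum{\tau}$. Because $s \geq s_0$ we have $\Gamma_s(\sigma)\conv = v \subfun \tau$ and $\lh{\sigma} > l$, which is precisely the definition of $\sigma$ receiving attention at stage $s$. Distinct $\tau$ give distinct stages $s = \godelnum{\tau}$, producing the desired infinitude.

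There is no real obstacle. The only delicate point is ensuring the stabilization of the approximation from the stated convergence — a triviality here because $\Gamma_s(\sigma)$ takes at most one value at each stage — and the only notational subtlety is the implicit requirement $\lh{\sigma} > l$. The lemma is essentially a translation of ``$\Gamma(\sigma) \conv$'' into a statement about the scheduling device (Gödel codes of strings) used to order stages in the construction.
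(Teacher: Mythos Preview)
Your proposal is correct and follows essentially the same argument as the paper: once $\Gamma_s(\sigma)$ has settled to its final value $v$, the infinitely many extensions $\tau \supfun v$ yield infinitely many stages $s = \godelnum{\tau}$ at which $\sigma$ receives attention. Your explicit remark about the implicit hypothesis $\lh{\sigma} > l$ is a useful clarification the paper leaves unstated.
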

\begin{proof}
There is some stage \( s_0 \) at which  \( \Gamma_s(\sigma) \) settles down to equal some \( \tau \).  There are infinitely many \( \tau' \supfun \tau \) so there are infinitely stages \( s > s_0 \)  where \( s = \godelnum{\tau'} \) with \( \tau' \supfun \tau \).  
\end{proof}

With this result in hand, we note that we can assume that while \( \Gamma_s(\sigma)\conv \) we delay any stage where \( T'_{s+1}(\sigma) \neq T'_s(\sigma) \) until a stage at which \( \sigma \) receives attention.  By the argument above, there will always be such a stage so there is no harm in pausing our approximation until we see such a stage.  This has the same effect as if the construction had undefined \( \Gamma_{s+1}(\sigma) \) whenever \( \sigma \) had left \( T' \) since the last stage it received attention.  The benefit of redefining our approximation is simply to avoid having to repeatedly specify that \( \sigma \) has or hasn't been out of \( T'_s \) since last receiving attention.   We now observe that we try and define \( \Gamma(\sigma) \) when \( \sigma \in T' \).   

\begin{lemma}\label{lem:almost-always-defined}
If \( \sigma \in T' \) and \( \Gamma(\sigma^{-})\conv \)  then for almost all stages \( \Gamma_s(\sigma)\conv \).
\end{lemma}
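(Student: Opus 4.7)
The plan is a two-step argument: first show that once $\Gamma_s(\sigma)$ becomes defined (at some sufficiently late stage), it never becomes undefined again; second, show that there must be a stage at which it becomes defined.

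Fix $\sigma = \sigma^{-}\concat[x] \in T'$ with $\Gamma(\sigma^{-})\conv$. First I would pick stages $s_0, s_1$ so that for all $s \geq s^{*} \eqdef \max(s_0, s_1)$ we have $\Gamma_s(\sigma^{-}) = \tau_0$ (the stable value) and $\sigma \in T'_s$ (using the ``pause'' convention discussed just above the statement of the lemma, so that in each $T'_s$ we only ever see the final state of $\sigma$ relative to $T'$). Inspecting the construction, $\Gamma_{s+1}(\sigma)$ is only ever set to $\diverge$ either in the first case of the first half of the stage (which requires $\sigma \nin T'_{s+1}$) or as a side-effect when $\Gamma_{s+1}(\sigma^{-})$ is changed to a properly different value (as flagged in the parenthetical ``possibly because $\Gamma_{s+1}(\sigma) \neq \Gamma_s(\sigma)$'' in the second half). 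After stage $s^{*}$, neither event can occur, so defining $\Gamma_s(\sigma)$ at any one such $s$ suffices for the conclusion.

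For the second step, I invoke \cref{lem:inf-attention} applied to $\sigma^{-}$ to get infinitely many stages at which $\sigma^{-}$ receives attention, and then cherry-pick one of the right shape. Specifically, the map $\tau \mapsto \godelnum{\tau}$ gives infinitely many stages $s = \godelnum{\tau}$ with $\tau \supfun \tau_0\concat[x]$; choose one with $s \geq s^{*}$, and moreover large enough that we may assume $\Gamma_s(\sigma)\diverge$ (otherwise there is nothing to prove). At such a stage, $\tau \supfun \Gamma_s(\sigma^{-})$ so $\sigma^{-}$ receives attention and we enter the second half of the stage. Since $\Gamma_s(\sigma)\diverge$ and $\Gamma_s(\sigma^{-}\concat[y])$ for $y \neq x$, if defined, extends $\tau_0\concat[y]$ and hence is incompatible with $\tau \supfun \tau_0\concat[x]$, the $\subfun$-maximal string $\sigma'$ with $\Gamma_{s+1}(\sigma')\subfunneq \tau$ is precisely $\sigma^{-}$. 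The ``Finally'' clause in the second half of the stage then fires with $\sigma\concat[x]$ there taken to be our $\sigma$: since $\sigma \in T'_s$ and $\Gamma_s(\sigma)\diverge$, it sets $\Gamma_{s+1}(\sigma) = \tau\concat[k]$ for some large $k$, so $\Gamma_{s+1}(\sigma)\conv$. Combined with the stability from the previous paragraph, this gives $\Gamma_s(\sigma)\conv$ for all sufficiently large $s$.

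The only subtle point, and the step I would pay most attention to in writing a full proof, is confirming that $\sigma^{-}$ really is the $\subfun$-maximal candidate picked up in the second half of the stage at our chosen stage $s$ — that is, that no proper extension $\sigma'' \supfunneq \sigma^{-}$ has $\Gamma_s(\sigma'')$ defined and contained strictly below $\tau$. This is immediate because any such $\sigma''$ must begin $\sigma^{-}\concat[y]$ for some $y$, and by expansionariness (\cref{lem:tree-minus-one}(\ref{lem:tree-minus-one:expansionary}), which is part of what the construction is designed to maintain) its image must extend $\tau_0\concat[y]$, which is compatible with $\tau$ only when $y = x$ — the case we have already excluded by our assumption $\Gamma_s(\sigma)\diverge$.
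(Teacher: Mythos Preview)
Your argument is essentially the same as the paper's: identify the two ways \( \Gamma_{s+1}(\sigma) \) can become undefined (either \( \sigma \nin T'_{s+1} \) or a change on a proper initial segment), argue both cease after some stage, and then exhibit a stage at which the second half of the construction defines \( \Gamma_{s+1}(\sigma) \). The paper's version is much terser and omits your verification that \( \sigma^{-} \) is the \( \subfun \)-maximal candidate; you also omit the trivial case \( \lh{\sigma} \leq l \), but that is harmless since \( \Gamma_s \) is the identity there.
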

\begin{proof}
This is trivial if \( \lh{\sigma} \leq l \) so we may assume that \( \lh{\sigma} > l \).  The  only ways that we can set \( \Gamma_{s+1}(\sigma)\diverge \) is when \( \sigma \nin T'_s \) or \( \Gamma_{s+1} \) changes on some proper initial segment of \( \sigma \).  The assumption ensures the later eventually stops happening and since \( \sigma \in T \) we also have \( \sigma \in T'_s \) for almost all \( s \).  If \( s = \godelnum{\tau} \) with \( \tau \supfun \Gamma_s(\sigma^{-})\concat[x] \) where \( x = \sigma(\lh{\sigma}-1) \) and \( \sigma \in T'_s \) then we define \( \Gamma_{s+1}(\sigma)  \).  
\end{proof}

We now argue that each requirement imposes finitely much injury. 

\begin{lemma}\label{lem:req-gi-converges}
Suppose that \( \sigma \in T' \), \( \Gamma(\sigma^{-})\conv \)  and \( \lh{\sigma} = 4i + 1 > l \) then \( \Gamma(\sigma)\conv \).  Moreover, \( \Gamma(\sigma) \) witnesses the satisfaction of \req{G}{i}  
\end{lemma}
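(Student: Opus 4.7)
The plan is to combine \cref{lem:almost-always-defined,lem:inf-attention} with an analysis of the specific instructions for the \( \lh\sigma = 4i+1 \) case to show that \( \Gamma_s(\sigma) \) stabilizes, and then case-split on whether the instruction ever updates \( \Gamma_s(\sigma) \) past stabilization to verify the two disjuncts of \req{G}{i} respectively.

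First I would fix a stage \( s_0 \) past which \( \Gamma_s(\sigma^-) \) is frozen at \( \Gamma(\sigma^-) \) and then use \cref{lem:almost-always-defined} to pick \( s_1 \geq s_0 \) past which \( \Gamma_s(\sigma)\conv \). With \( \Gamma_s(\sigma^-) \) fixed, the second half of a stage can never undefine \( \Gamma_s(\sigma) \), so the only mechanism for change past \( s_1 \) is the second subcase of the \( \lh\sigma = 4i+1 \) instruction: we find \( \nu \supfun \Gamma_s(\sigma) \) with \( \nu \in T_s \isect \REset[s](X){i} \) and extend to \( \nu \). Because \( \REset[s](X){i} \) is monotone in \( s \), once this update fires we get \( \Gamma_{s'}(\sigma) \supfun \REset(X){i} \) forever after, so the ``do nothing'' first subcase applies at every subsequent attention stage. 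Hence the update fires at most once past \( s_1 \), and \( \Gamma(\sigma) = \lim_s \Gamma_s(\sigma) =: \tau^* \) exists.

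If the update ever fired, then \( \tau^* \supfun \REset(X){i} \) and the first disjunct of \req{G}{i} holds at \( \sigma \). Otherwise \( \tau^* \not\supfun \REset(X){i} \), and I would prove strong avoidance as follows: suppose for contradiction that some \( \tau \in T \) with \( \tau \supfun \tau^* \) extends some \( \nu \in \REset(X){i} \). Since \( \nu \) and \( \tau^* \) are both initial segments of \( \tau \) they are comparable, and \( \nu \subfun \tau^* \) would already give \( \tau^* \supfun \REset(X){i} \); thus \( \nu \supfunneq \tau^* \), and downward closure of \( T \) puts \( \nu \in T \). Pick \( t \) at which \( \nu \) enters \( \REset(X){i} \) and use \cref{lem:inf-attention} to select an attention stage \( s \geq \max(s_1, t, \godelnum{\nu} + 1) \); at stage \( s \) we have \( \Gamma_s(\sigma) = \tau^* \) and \( \nu \in T_s \isect \REset[s](X){i} \), so the second subcase would fire and update \( \Gamma_{s+1}(\sigma) \), contradicting stabilization.

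The step I expect to be the main obstacle is the timing argument in the avoidance case: I must synchronize the enumeration deadline \( \godelnum{\nu} + 1 \) that forces \( \nu \) into \( T_s \) (or permanently out of \( T \)), the stage \( t \) at which \( \nu \) shows up in \( \REset(X){i} \), and the attention schedule for \( \sigma \) guaranteed by \cref{lem:inf-attention}. Once these three ingredients are aligned at a single stage \( s \), the contradiction with stabilization follows and the lemma drops out; the rest is routine bookkeeping about which cases of the first half of a stage can alter \( \Gamma_s(\sigma) \).
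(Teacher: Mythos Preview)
Your proposal is correct and follows essentially the same approach as the paper: use \cref{lem:almost-always-defined} to get $\Gamma_s(\sigma)$ eventually defined, observe that the $4i+1$ instruction can redefine it at most once past stabilization of $\Gamma_s(\sigma^{-})$, and split on whether that redefinition ever happens to verify the two disjuncts of \req{G}{i}. Your write-up simply unpacks in more detail the paper's one-line remark that ``if we don't ever change $\Gamma_{s+1}(\sigma)$ to meet $\REset(X){i}$ that's because we don't have the chance above $\Gamma_s(\sigma)$,'' making the timing argument for the avoidance case explicit via \cref{lem:inf-attention}.
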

\begin{proof}
By \cref{lem:almost-always-defined} it is enough to prove the moreover claim since at that point we will stop redefining \( \Gamma_{s+1}(\sigma)  \).  However, it is evident that once we change \( \Gamma_{s+1}(\sigma)  \) once to meet some \( X \)-r.e. set we never need to do so again \textemdash assuming we've already settled on \( \Gamma(\sigma^{-}) \) and on \( \sigma \in T' \).   This completes the proof since, if we don't ever change \( \Gamma_{s+1}(\sigma)  \) to meet \( \recfnl{i}{X}{} \) that's because we don't have the chance above  \( \Gamma_{s}(\sigma)  \)  and either way we satisfy \req{G}{i}.  
\end{proof}

\begin{lemma}\label{lem:req-si-converges}
Suppose that \( \sigma \in T' \), \( \Gamma(\sigma^{-})\conv \)  and \( \lh{\sigma} = 4i + 2 > l \) then \( \Gamma(\sigma)\conv \),  \( \Gamma(\sigma\concat[0])\conv \) and  \( \Gamma(\sigma\concat[1])\conv \).  Moreover, \( \Gamma(\sigma\concat[0]), \Gamma(\sigma\concat[1]) \) and \( \Gamma(\sigma) \)  witness the satisfaction of \req{S}{i}.  
\end{lemma}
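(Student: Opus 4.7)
The plan is to adapt the argument for $\req{G}{i}$ just given in \cref{lem:req-gi-converges}. I first use \cref{lem:almost-always-defined} to reduce all three convergence claims to showing that the $S_i$-case for $\sigma$ fires only finitely often past the point where $\Gamma(\sigma^-)$ settles; I then read off a witness for $\req{S}{i}$ by cases on whether the $S_i$-case ever fires.  The key preliminary observation is that niceness of $T'$ and $\lh{\sigma\concat[j]} = 4i+3 \equiv 3 \pmod 4$, together with $\sigma \in T'$, force $\sigma\concat[0], \sigma\concat[1] \in T'$; so once I have $\Gamma(\sigma)\conv$, two further invocations of \cref{lem:almost-always-defined} will give $\Gamma(\sigma\concat[0])\conv$ and $\Gamma(\sigma\concat[1])\conv$ for free, and the substantive task is the analysis at $\sigma$.

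For stability, fix $s_0$ beyond which $\Gamma_s(\sigma^-)$ is constant; \cref{lem:almost-always-defined} then yields $\Gamma_s(\sigma)\conv$ for all $s \geq s_0$. Past $s_0$ the only mechanism that can alter $\Gamma_{s+1}(\sigma)$ is the $S_i$-case, which is gated by the check that $\Gamma_s(\sigma\concat[0])$ and $\Gamma_s(\sigma\concat[1])$ are not yet seen to be $i$-splitting in $s$ steps. After a single firing at some $s_1 \geq s_0$ the newly installed values of $\Gamma_{s_1+1}(\sigma\concat[j])$ extend fixed $i$-splitting strings $\tau_0, \tau_1 \in T_{s_1}$; since $i$-splitting is certified by a converging computation, the guard is satisfied at every later stage and the case cannot fire again. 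Hence $\Gamma(\sigma)$ converges, and the preliminary observation then delivers convergence at $\sigma\concat[0]$ and $\sigma\concat[1]$.

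To verify $\req{S}{i}$ I split on whether the $S_i$-case ever fires past $s_0$. If it does, $\sigma$ witnesses the first clause of $\req{S}{i}$ directly: $\recfnl{i}{\Gamma(\sigma\concat[0])\Tplus X}{}$ and $\recfnl{i}{\Gamma(\sigma\concat[1])\Tplus X}{}$ extend the incompatible outputs produced by $\tau_0$ and $\tau_1$. If it never does, then no $i$-splitting pair is visible in $T_s$ above $\Gamma(\sigma)$ at any stage $s \geq s_0$, and the remaining job is to upgrade this to the statement that no $i$-splitting pair exists in $T$ itself above $\Gamma(\sigma)$.

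I expect this upgrade to be the principal, if mild, obstacle. The plan is a short contradiction argument: any alleged $i$-splitting pair $\tau_0, \tau_1 \in T$ extending $\Gamma(\sigma)$ would be enumerated into $T_s$ at a finite stage and certified by a finite computation, so at any sufficiently large stage the $S_i$-case guard would succeed, contradicting our assumption that it never fires past $s_0$. This works because $T = \bigcup_s T_s$ is monotone and $i$-splitting is r.e.\ in $X$; it is the only place in the verification that uses anything beyond tracking which cases of the construction are active, and it faithfully mirrors the corresponding upgrade step in the $\req{G}{i}$ argument.
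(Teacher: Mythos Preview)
Your proposal is correct and follows essentially the same approach as the paper: both use niceness of \( T' \) to put \( \sigma\concat[0], \sigma\concat[1] \in T' \), invoke \cref{lem:almost-always-defined} to get the three values eventually defined, observe that the \( S_i \)-case can fire at most once after things settle, and read off \req{S}{i} by cases. Your write-up is in fact more careful than the paper's (you spell out the ``upgrade'' from no visible \( i \)-split in \( T_s \) to none in \( T \)); the only nit is that \cref{lem:almost-always-defined} yields ``defined for almost all \( s \)'' rather than convergence, but you supply the missing finitary-firing argument elsewhere, so there is no gap.
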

\begin{proof}
Since \( T' \) is nice and \( \lh{\sigma} \equiv 2 \pmod{4} \)  we have that \( \sigma\concat[0], \sigma\concat[1] \in T' \).  Once all strings permanently enter \( T' \) and \( \Gamma_s(\sigma^{-}) \) settles then by the reasoning of \cref{lem:almost-always-defined} we eventually define all of \( \Gamma_s(\sigma), \Gamma_s(\sigma\concat[0]), \Gamma_s(\sigma\concat[1])  \).  These only change if we see some way to extend \(  \Gamma_s(\sigma\concat[0]) \) and \( \Gamma_s(\sigma\concat[1])  \) to \( i \)-split.  Either way, \( \Gamma \) settles down on all three values and we witness the satisfaction of \req{S}{i}.
\end{proof}

We don't bother with a lemma for the case where \( \lh{\sigma} =4i+3 \) as the only action we might take for such a \( \sigma \) is covered by the prior lemma so, clearly, \( \Gamma(\sigma)\conv \) whenever \( \Gamma(\sigma^{-})\conv \).  Similarly, whenever \( \lh{\sigma} \leq l \) we have \( \Gamma(\sigma)\conv \) by direct construction.  This leaves only the case where \( \lh{\sigma} \equiv 0 \pmod{4} \).    

\begin{lemma}\label{lem:req-ri-converges}
Suppose that \( \sigma \in T' \), \( \Gamma(\sigma^{-})\conv \)  and \( \lh{\sigma} = 4i > l \) then \( \Gamma(\sigma)\conv \).  Moreover, \( \Gamma(\sigma) \) witnesses the satisfaction of \req{R}{i} \textemdash that is, if \( f \in [T] \) and \( f \supfun \Gamma(\sigma) \) then \( \recfnl{i}{f \Tplus X}{} \) is either partial or disagrees with \( \Gamma(g) \) for all \( g \in [T'] \).     
\end{lemma}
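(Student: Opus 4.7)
My approach is to establish $\Gamma(\sigma)\conv$ by a finite-injury analysis and then verify $\req{R}{i}$ by case analysis on whether $g \supfun \sigma$, invoking either the self sub-case at $\sigma$ or the helping sub-case at a longer string on $g$'s path. By hypothesis $\Gamma(\sigma^-)\conv$, so there is a stage $s_0$ after which $\Gamma_s(\sigma^-)$ is fixed and $\sigma$ stably lies in $T'_s$; beyond $s_0$, only the length-$4i$ case can modify $\Gamma_s(\sigma)$. The helping sub-case fires at most once per stably higher-priority $\upsilon$ since $\Psi(\Gamma_s(\sigma),\upsilon)$ is preserved under further extensions of $\Gamma_s(\sigma)$, and only finitely many $\upsilon$'s stably appear in $T'_s$ strictly before $\sigma$ does, so helping ceases by some $s_1 \geq s_0$.

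For the self sub-case, I argue termination by contradiction. Assume firings at $s_1 < t_1 < t_2 < \cdots$ with witnesses $\xi_1,\xi_2,\ldots$ and set $\rho_s = \recfnl[s]{i}{\Gamma_s(\sigma) \Tplus X}{}$. Monotonicity of $\recfnl{i}{\cdot \Tplus X}{}$ together with $\rho_{t_k} \subfunneq \Gamma_{t_k}(\xi_k)$ and $\rho_{t_k+1} \incompat \Gamma_{t_k}(\xi_k)$ force consecutive $\Gamma(\xi_k)$'s to be pairwise incompatible, so by expansionarity of $\Gamma$ the $\xi_k$'s form a $\subfun$-antichain with each member $\incompat \sigma$. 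The relation $\Gamma(\xi_k^-) \subfunneq \rho_{t_k}$ together with $\rho_{t_k} \subfun \rho_\infty$ (the limit computation) forces the $\xi_k^-$'s to form a $\subfun$-chain along a common path $\hat\rho \in [T']$ with $\Gamma(\hat\rho) = \rho_\infty$, so each $\xi_k = \xi_k^-\concat[a_k]$ is a single-bit divergence off $\hat\rho$. Combined with the niceness of $T'$ and the freshness of the large-$k$ extension rule, this bounds the viable $\xi_k$'s and contradicts the assumption of infinitely many firings.

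For $\req{R}{i}$, take $f \in [T]$ with $f \supfun \Gamma(\sigma)$, its preimage $\hat f \in [T']$, and $g \in [T']$ with $g \neq \hat f$; assume for contradiction $\recfnl{i}{f \Tplus X}{} = \Gamma(g)$. If $g \not\supfun \sigma$, let $\xi \subfun g$ be the minimal initial segment with $\xi \incompat \sigma$; then for $s$ sufficiently large, $\rho_s$ (now with settled $\Gamma(\sigma)$) is a prefix of $\Gamma(g) \supfun \Gamma(\xi)$ that strictly extends $\Gamma(\xi^-)$ and lies $\subfun \Gamma(\xi)$. If a deflecting $\tau$ exists the self sub-case fires past $s_1$, contradicting settling; otherwise every extension of $\Gamma(\sigma)$ in $T$ yields a computation compatible with $\Gamma(\xi)$, and via expansionarity this pins the limit computation as an $X$-computable path incompatible with the assumed $\Gamma(g)$, a contradiction. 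If instead $g \supfun \sigma$, $g$ and $\hat f$ diverge at some $\sigma'' \supfun \sigma$; pick $\sigma' \subfun g$ of length $4i'$ with $\sigma' \supfun \sigma''$ and invoke the helping sub-case at $\sigma'$ with $\upsilon = \sigma$ (valid since $\sigma$ enters $T'$ no later than $\sigma'$), producing $\upsilon^* \supfun \Gamma(\sigma)$ with $\recfnl{i}{\upsilon^* \Tplus X}{} \incompat \Gamma(\sigma')$; arranging $\upsilon^* \subfun \Gamma(\hat f)$ propagates incompatibility to $\recfnl{i}{\Gamma(\hat f) \Tplus X}{} \incompat \Gamma(\sigma') \subfun \Gamma(g)$, the required contradiction. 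The main obstacle is the self sub-case termination: fusing the antichain-on-a-common-path structure of the $\xi_k$'s with the niceness and large-$k$ freshness constraints to rule out infinite firings; the rest reduces to careful bookkeeping of which sub-case handles which $(\hat f, g)$ pair.
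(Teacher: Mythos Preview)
Your self sub-case termination argument has a genuine gap. You try to bound the witnesses $\xi_k$ directly: you argue their $\Gamma$-images are pairwise incompatible, so the $\xi_k$ form an antichain of one-step divergences off a common path $\hat\rho$, and then appeal to ``niceness of $T'$ and the freshness of the large-$k$ extension rule'' to conclude there are only finitely many. This does not go through. First, the $\xi_k$ are only required to lie in the approximation $T'_{t_k}$, so $\Gamma_{t_k}(\xi_k)$ need not stabilize and your incompatibility reasoning conflates stagewise values with limits. Second, even granting the antichain picture, nothing about niceness or the fresh-$k$ rule (which governs how nodes are \emph{extended} in $T$, not how many branch points exist along a path in $T'$) bounds the number of one-step divergences off an infinite path; there can be infinitely many.

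The paper's mechanism is structurally different and is where the real content lies. The lemma is proved \emph{simultaneously} with the next one ($\Gamma(\sigma)\conv$ for all $\sigma\in T'$) by induction on the stage of permanent entry into $T'$. Assuming $\sigma$ is the minimal counterexample, infinitely many self-actions make $f=\lim_s\Gamma_s(\sigma)$ a path with $\recfnl{i}{f\Tplus X}{}$ total; one then traces this computation through $\rng\Gamma$ to locate some $\xi\in T'$ with $\lh\xi\equiv 0\pmod 4$, $\xi\incompat\sigma$, and $\xi$ entering $T'$ \emph{after} $\sigma$. The direction of help is now reversed: $\xi$, working on its own $\req{R}{i'}$ requirement, sees $\sigma$ as higher priority and (using the inductive hypothesis that every earlier-entering string has settled) eventually acts in \emph{its} helping sub-case to extend $\Gamma_s(\xi)$ to capture a disagreement with $\recfnl{i}{\Gamma_s(\sigma)\Tplus X}{}$. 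Since $\Gamma_s(\xi)$ only grows thereafter, this disagreement is permanent and $\sigma$ stops acting. Your setup, which assumes only $\Gamma(\sigma^-)\conv$ rather than the full simultaneous induction over entry order, cannot invoke this.

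A secondary gap: in your $\req{R}{i}$ verification for $g\supfun\sigma$, the phrase ``arranging $\upsilon^*\subfun\Gamma(\hat f)$'' is unjustified. When $\sigma'$ helps $\upsilon=\sigma$, the witness $\upsilon^*$ is merely \emph{some} string in $T_s$ extending $\Gamma_s(\sigma)$; nothing forces the fixed path $\hat f$ (and hence $f$) to pass through it, so you cannot propagate the incompatibility to $\recfnl{i}{f\Tplus X}{}$.
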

We prove this lemma simultaneously with the next lemma.
\begin{lemma}\label{lem:tprime-converge}
If \( \sigma \in T' \) then \( \Gamma(\sigma)\conv \). 
\end{lemma}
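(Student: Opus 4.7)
The plan is to establish \cref{lem:req-ri-converges,lem:tprime-converge} by simultaneous strong induction on \( \lh{\sigma} \). For \( \lh{\sigma} \leq l \) both statements are immediate from the initial stipulation that \( \Gamma_s \) is the identity on \( T'\restr{l} \), and for \( \lh{\sigma} > l \) with \( \lh{\sigma} \not\equiv 0 \pmod{4} \), the convergence of \( \Gamma(\sigma) \) follows directly from \cref{lem:req-gi-converges,lem:req-si-converges} once \( \Gamma(\sigma^{-})\conv \). Thus the entire inductive step reduces to the case \( \lh{\sigma} = 4i > l \).

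Fix such a \( \sigma \in T' \). By induction, \( \Gamma(\upsilon)\conv \) for every \( \upsilon \in T' \) with \( \lh{\upsilon} < \lh{\sigma} \), and \( \req{R}{i'} \) is witnessed at each length-\( 4i' \) predecessor. I enumerate the sources of change in \( \Gamma_s(\sigma) \): (a) a propagated change from \( \Gamma_s(\sigma^{-}) \) or \( \sigma \) temporarily leaving \( T'_s \); (b) a help-action for some higher-priority \( \upsilon \); and (c) an own-action for \( \req{R}{i} \) with witness \( \xi \incompat \sigma \). Source (a) contributes finitely many changes by the induction hypothesis together with \( \sigma \in T' \). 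For (b), I note that the help predicate \( \Psi(\tau,\upsilon) \) from the construction is monotone in \( \tau \) — extending \( \tau \) preserves the incompatibility between \( \recfnl{i'}{\upsilon^{*} \Tplus X} \) and \( \tau \) — so each \( \upsilon \) triggers at most one help at \( \sigma \); and since only finitely many \( \upsilon \) appear to enter \( T' \) before \( \sigma \) permanently does, (b) fires finitely often.

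The heart of the argument is bounding (c). Suppose for contradiction own-actions fire infinitely often at stages \( s_1 < s_2 < \cdots \) with witnesses \( \xi_1, \xi_2, \ldots \in T' \), each \( \xi_j \incompat \sigma \). Since each action forces \( \recfnl{i}{\Gamma_{s_j+1}(\sigma) \Tplus X} \) to be incompatible with \( \Gamma(\xi_j) \) while the pre-action computation was a proper initial segment of \( \Gamma(\xi_j) \), the computation strictly extends at each action. Thus \( \tau_{\infty} \eqdef \lim_s \Gamma_s(\sigma) \) is a real in \( [T] \), \( h \eqdef \recfnl{i}{\tau_{\infty} \Tplus X} \) is total, and every \( \Gamma(\xi_j^{-}) \) is a prefix of \( h \). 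Expansionariness (\cref{lem:tree-minus-one}, part \ref{lem:tree-minus-one:expansionary}) makes the \( \Gamma \)-images of distinct children of a common node disagree at the very next position, while monotonicity of the functional in its input pins down \( h \) at each position once the first action has reached it; together these force the \( \xi_j^{-} \) to form a \( \subfun \)-chain of unboundedly growing length, cofinal in a path \( g \in [T'] \) with \( \Gamma(g) = h \) and \( g \incompat \sigma \) (since \( \lh{\xi_j^{-}} \geq \lh{\sigma} \) eventually forces \( \xi_j^{-} \incompat \sigma \)).

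I derive the final contradiction by tracing the ``large \( k \)'' stamped into \( \Gamma(\sigma) \) at position \( \lh{\Gamma(\sigma^{-})} \) by the second half of each stage — this tag identifies which child of \( \sigma^{-} \) the string \( \sigma \) is. Either the value of \( \tau_{\infty} \) at this position coincides with \( \sigma \)'s tag, so that \( \tau_{\infty} \) extends \( \Gamma_s(\sigma) \) for all large \( s \) and \( \Gamma(\sigma) \) in fact converges as a genuine initial segment of \( \tau_{\infty} \), contradicting \( \Gamma(\sigma)\diverge \); or \( \tau_{\infty} \) is the \( \Gamma \)-image of a path \( \hat{f} \) through a true sibling \( \sigma^{-}\concat[x^{*}] \neq \sigma \), whence \( \recfnl{i}{\Gamma(\hat{f}) \Tplus X} = \Gamma(g) \) with \( g \neq \hat{f} \) and \( g \incompat \sigma \) is a \( \req{R}{i} \)-violation that the construction would have blocked at the common ancestor of \( \sigma \) and \( \hat{f} \) before \( \xi_1 \) ever became available. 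I expect the most delicate point will be the sibling analysis, since siblings of \( \sigma \) share its length and so are not directly covered by the induction hypothesis; making this rigorous will require invoking the large-\( k \) marker to disambiguate siblings by hand rather than by induction.
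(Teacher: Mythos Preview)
Your overall outline is reasonable, but the induction is on the wrong parameter and the final contradiction uses the wrong mechanism.

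\textbf{The induction must be on stage of permanent entry, not on length.} In part (b) you argue that each eligible \( \upsilon \) triggers at most one help because \( \Psi(\tau,\upsilon) \) is monotone in \( \tau \). But \( \Psi \) also depends on \( \Gamma_s(\upsilon) \): once \( \sigma \) has helped \( \upsilon \) with witness \( \upsilon^{*}_1 \supfun \Gamma_{s_1}(\upsilon) \), a later value \( \Gamma_{s_2}(\upsilon) \) may be incompatible with \( \upsilon^{*}_1 \) (because some ancestor of \( \upsilon \) acted and \( \upsilon \) was reset and redefined), whence \( \lnot\Psi(\Gamma_{s_2}(\sigma),\upsilon) \) can hold and \( \sigma \) helps \( \upsilon \) again. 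Your length hypothesis does not rule this out: the eligible \( \upsilon \) are those that permanently enter \( T' \) before \( \sigma \), and such \( \upsilon \) may have \( \lh{\upsilon} \geq \lh{\sigma} \). The paper instead takes \( \sigma \) to be the counterexample with least permanent-entry stage; then every eligible \( \upsilon \) entered earlier, so \( \Gamma(\upsilon)\conv \) by induction, \( \Gamma_s(\upsilon) \) eventually stabilises, and only then does the monotonicity argument go through.

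\textbf{Part (c) misses the construction's actual mechanism.} Your case split on the ``large \( k \) tag'' does not yield a contradiction: since \( \Gamma_s(\sigma) \) is monotone increasing after (a)-changes cease, \( \tau_\infty \) trivially extends every \( \Gamma_s(\sigma) \), and this says nothing about \( \Gamma(\sigma) \) converging. Your alternative branch (``\( \tau_\infty \) is the \( \Gamma \)-image of a sibling path'') would require \cref{lem:paths-all-from-tprime}, which is proved \emph{after} the present lemma. The paper instead traces along \( T' \) following \( h = \recfnl{i}{\tau_\infty \Tplus X}{} \), arguing at each step that the relevant child must lie in \( T' \) (else \( \sigma \) would stop acting), until it reaches a \( \xi \in T' \) with \( \lh{\xi} \equiv 0 \pmod 4 \) that entered \( T' \) \emph{after} \( \sigma \). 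This \( \xi \), when it receives attention for \( \req{R}{i'} \), sees \( \sigma \) as a higher-priority string and executes the \emph{help} clause of \cref{eq:help-upsilon}, extending \( \Gamma_s(\xi) \) to be incompatible with \( \recfnl{i}{\Gamma_s(\sigma) \Tplus X}{} \). After that, \( \sigma \) never finds another own-action witness --- the reciprocal-help mechanism is precisely what was built into the \( \req{R}{i} \) case of the construction, and it is what you need to invoke.
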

\begin{proof}
Suppose, for a contradiction, that \( \sigma \in T' \) is the witness which permanently enters \( T' \) at the least stage such that \( \Gamma(\sigma)\diverge \).  By the inductive assumption and the above lemmas/discussion we can assume that \( \lh{\sigma} = 4i > l \), \( \Gamma_s(\sigma^{-}) \) has settled down to it's true value, \( \sigma \) has permanently entered \( T' \) and that \( \Gamma_s(\sigma) \) is defined.  Thus, the only way we could have \( \Gamma_s(\sigma)\diverge \) is if we act to try and meet \req{R}{i} infinitely many times.

However, since acting to meet \req{R}{i} always sets \( \Gamma_{s+1}(\sigma) \) to be a proper extension of \( \Gamma_s(\sigma) \)  we can define \( f = \lim_{s \to \infty}   \Gamma_{s+1}(\sigma)  \).  If we are acting to help some other \( \sigma' \) then \( \sigma' \) must appear to enter \( T' \) before \( \sigma \) finally enters \( T' \) \textemdash say at stage \( s_0 \) \textemdash  and  we only do so if \( \sigma' \) hasn't already met it's requirement.  Eventually all strings in \( T'_{s_0} \) that aren't permanently in \( T' \) will have left \( T' \) and the inductive assumption therefore tells us \( \Gamma(\sigma')\conv \) and once \( \Gamma_s(\sigma') \) reaches its limit we never act to help \( \sigma' \) again.  Thus, we can assume we are working above the finitely many extensions we make to help such \( \sigma' \) and that \( \recfnl{i}{f \Tplus X}{} \) is total since \textemdash if we aren't helping another string \textemdash every time we act we ensure \( \recfnl{i}{\Gamma_{s+1}(\sigma) \Tplus X}{} \supfunneq \recfnl{i}{\Gamma_{s}(\sigma) \Tplus X}{} \). 

We now argue that there is some string \( \xi \) so that the definition of \( \Gamma(\xi) \) will to help us. Specifically, we claim there is some string \( \xi \in T'  \) entering \( T' \) after \( \sigma \) such that 
\begin{equation}\label{eq:xi-helps-ri}
 \xi \incompat \sigma \land \recfnl{i}{\Gamma_s(\sigma) \Tplus X}{} \supfunneq \Gamma(\xi^{-})\conv \land \lh{\xi} > l \land \lh{\xi} \equiv 0 \pmod{4}
\end{equation}
We prove this by starting with \( \estr \) and always extending to be compatible with \( \recfnl{i}{f \Tplus X}{} \).  Now suppose that  for some \( \tau \in T' \) we have \( \Gamma(\tau)\conv \subfun \recfnl{i}{f \Tplus X}{}  \) then for some \( x \) we have  \( \Gamma(\tau)\concat[x] \subfun \recfnl{i}{f \Tplus X}{}  \).  If \( \tau\concat[x] \nin T' \) then for all sufficiently large \( s \) we have \( \Gamma_s(\tau\concat[x]) \supfun \Gamma(\xi)\concat[x]\concat[k] \) for \( k \) too large to be compatible with \( \recfnl{i}{f \Tplus X}{}  \) \textemdash and therefore  we would eventually stop having reasons to redefine \( \Gamma_s(\sigma) \) (since if \( \tau' \incompat \tau \) and \( \Gamma_s(\tau) = \Gamma(\tau) \) then \( \Gamma_s(\tau) \incompat  \recfnl{i}{f \Tplus X}{} \).  Thus, we can assume that  \( \tau\concat[x] \in T' \).   

But if \( \Gamma(\xi^{-})\conv\concat[x] \subfun \recfnl{i}{f \Tplus X}{}  \) and \( \xi = \xi^{-}\concat[x] \) and \( \xi \) enters \( T' \) before \( \sigma \) or \( \lh{\xi} \not\equiv 0 \pmod{4} \) or \( \lh{\xi} \leq l \) then, since \( \xi \in T' \), we will have \( \Gamma(\xi)\conv \) (either inductively or by above lemmas/discussion).  If \( \Gamma(\xi) \incompat \recfnl{i}{f \Tplus X}{}  \) then, as in the prior paragraph, we eventually stop extending \( \Gamma_s(\sigma) \) as we would if we had \( \xi = \sigma \) (since we only act when we see witnesses incompatible with \( \sigma \)).  Thus, by extending to keep our image under \( \Gamma \) compatible with \( \recfnl{i}{f \Tplus X}{}  \), we eventually find some \( \xi \in T' \) entering \( T' \) after \( \sigma \) satisfying \eqref{eq:xi-helps-ri}.    

Since \( \xi \in T' \) by  \cref{lem:almost-always-defined} for all sufficiently large stages \( \Gamma_s(\xi)\conv \).   Since \( \Gamma(\sigma')\conv \) for any \( \sigma' \) entering \( T' \) before \( \sigma \)  there is some finite stage larger than any mentioned so far in this proof after which  \( \Gamma_s(\xi) \) never again acts to help any string appearing to enter \( T' \) before \( \sigma \).  After this stage, if we act to ensure \( \recfnl{i}{\Gamma_{s+1}(\sigma) \Tplus X}{}  \) disagrees with something it is always  a string in \( T_s \)  extending \( \Gamma_s(\xi) \).   At the next time stage \( t \) at which  \( \xi \) receives attention after such an extension we will succeed in extending \( \Gamma_t(\xi) \) to capture the disagreement.  This will ensure that \( \Gamma_{t+1}(\xi) \incompat \recfnl{i}{\Gamma_{t+1}(\sigma) \Tplus X}{}  \).  Since \( \Gamma_{t'}(\xi) \supfun \Gamma_t(\xi) \) for \( t' \geq t \) this disagreement is permanently preserved and we will never again act to change our approximation to \( \Gamma(\sigma) \).

This contradicts the assumption that \( \Gamma(\sigma)\diverge \)  and completes the proof of \cref{lem:tprime-converge}.  To prove \cref{lem:req-ri-converges} assume that \( \lh{\sigma} = 4i > l \) and that \( f \in [T] \) is an extension of \( \Gamma(\sigma) \).  If there was \( g \in [T'] \) with \( \Gamma(g) = \recfnl{i}{f \Tplus X}{} \) then we would have some \( \xi \subfun g \) as above helping \( \sigma \) meet \req{R}{i} and since \( T \) is nice we would have had infinitely many chances to extend \( \Gamma(\xi) \) to allow  \( \recfnl{i}{\Gamma(\sigma) \Tplus X}{}  \) to disagree with it.  As we would have taken one of those chances this suffices to prove the claim.
\end{proof}

We now consider what happens when \( \sigma \nin T' \).

\begin{lemma}\label{lem:gamma-lim-dne}
If \( \sigma \nin T' \) then \( \Gamma(\sigma)\diverge \).  Moreover, if \( \recfnl{e}{\jump{X}}{\sigma}\conv =0 \) then \( \Gamma_s(\sigma)\diverge \) for almost all \( s \).   
\end{lemma}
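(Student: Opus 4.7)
The plan is to split into two cases based on whether $\recfnl{e}{\jump{X}}{\sigma}$ converges (to $0$) or diverges, since the convergent case yields the stronger moreover conclusion and the divergent case reduces to a short contradiction via stability.

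First I would dispatch the moreover claim. Assume $\recfnl{e}{\jump{X}}{\sigma}\conv = 0$. By the defining property of the approximation $T'_s$, there is a stage $s^{*}$ past which $T'_s(\sigma)$ is stably $0$. The delay convention \textemdash which defers any $T'$-change at $\sigma$ to a stage where $\sigma$ receives attention, such a stage being guaranteed by \cref{lem:inf-attention} so long as $\Gamma_s(\sigma)\conv$ \textemdash ensures that at the first attention stage $s_0 \geq s^{*}$ the first case of the first half of the stage triggers and explicitly sets $\Gamma_{s_0+1}(\sigma) = \diverge$. Thereafter $\sigma$ never receives attention again (its image is undefined), and the only redefinition clause (in the second half of the stage) requires $\sigma \in T'_s$, which fails for all $s \geq s^{*}$. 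Hence $\Gamma_s(\sigma)\diverge$ for all $s > s_0$.

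Next I would handle the main claim. The case $\recfnl{e}{\jump{X}}{\sigma}\conv = 0$ follows immediately from the moreover claim. In the divergent case, the approximation $T'_s(\sigma)$ does not stabilise and in particular revisits the value $0$ at infinitely many stages. Supposing for contradiction that $\Gamma(\sigma)\conv$, fix $s_0$ so that $\Gamma_s(\sigma) = \tau^{*}$ for all $s \geq s_0$. By \cref{lem:inf-attention}, $\sigma$ receives attention at infinitely many stages past $s_0$, and the delay convention forces every $T'$-change at $\sigma$ to be processed at such an attention stage; hence some attention stage $s \geq s_0$ sees $T'_{s+1}(\sigma) = 0$, and the first case of the first half then sets $\Gamma_{s+1}(\sigma) = \diverge$, contradicting $\Gamma_s(\sigma) = \tau^{*}$.

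The subtlety I expect to focus on is ruling out resurrection of $\Gamma$ by the second-half redefinition clause. In the moreover case this is immediate because the clause requires $\sigma \in T'_s$, which fails permanently past $s^{*}$. In the divergent case the clause can in principle fire intermittently, but it always installs a value whose last coordinate is a freshly chosen large $k$, so it cannot restore the specific stable value $\tau^{*}$ needed for the contradiction \textemdash and in any case a single undefining event past $s_0$ already suffices to refute stabilisation.
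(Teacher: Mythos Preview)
Your proof is correct and follows essentially the same route as the paper: the moreover clause is handled by noting that once the approximation to \( T'_s(\sigma) \) stabilises at \( 0 \), the first case of the first half undefines \( \Gamma \) and the second-half redefinition clause (which requires \( \sigma \in T'_s \)) can never resurrect it; the general clause is handled by observing that \( \sigma \) leaves the approximation infinitely often, triggering infinitely many undefinitions, while each second-half redefinition installs a value with a fresh large last coordinate and so the limit cannot exist. The only cosmetic difference is that you phrase the divergent case as a contradiction against a putative stable value \( \tau^{*} \), whereas the paper argues directly that the sequence \( \Gamma_s(\sigma) \) takes infinitely many distinct values; both are the same observation.
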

\begin{proof}
Suppose \( \recfnl{e}{\jump{X}}{\sigma}\conv = 0 \).  For almost all stages the approximation to \( T'_s(\sigma) = 0 \).  By \cref{lem:inf-attention} there will be some stage \( s \)  after which \( \sigma \) never again enters \( T'_s \) and we set  \( \Gamma_s(\sigma)\diverge \).  As we will never again define \( \Gamma_{s'}(\sigma) \) for \( s' \geq s \) this verifies the moreover claim. 

Now if \( \sigma \nin T' \) then it will leave our approximation to \( T' \) infinitely often.  However, recall from our discussion above that while \( \Gamma_s(\sigma)\conv \) we only update \( T'_{s+1}(\sigma) \) when \( \sigma \) receives attention at stage \( s \).  Thus, if \( \sigma \nin T' \) there will be infinitely many stages at which we set  \( \Gamma_s(\sigma)\diverge \).  Even if we redefine \( \Gamma_s(\sigma) \) at the same stage so it never actually takes on an undefined value note that each time the second half of the stage defines \( \Gamma_s(\sigma) \) it chooses a different value than all prior values.  Thus the limit doesn't exist completing the proof.
\end{proof}

We verify that every path in \( T \) comes from a path in \( T' \).  In proving this we make use of the fact that we've clearly constructed \( \Gamma \) to be an expansionary function. 

\begin{lemma}\label{lem:paths-all-from-tprime}
If \( f \in [T] \) then there is some \( g \in [T'] \) with \( \Gamma(g) = f \).   
\end{lemma}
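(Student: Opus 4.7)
The plan is to inductively construct $g \in [T']$ satisfying $\Gamma(g) = f$. Consider the set $S = \set{\sigma \in T'}{\Gamma(\sigma) \subfun f}$; by the expansionarity of $\Gamma$ (part \ref{lem:tree-minus-one:expansionary} of \cref{lem:tree-minus-one}), any two elements of $S$ have $\subfun$-comparable $\Gamma$-images (both being prefixes of $f$) and hence are themselves $\subfun$-comparable, so $S$ is a chain. Since $|\Gamma(\sigma)| \geq |\sigma|$ by expansionarity, if $S$ is infinite then $g := \bigcup S \in [T']$ will satisfy $\Gamma(g) = f$, reducing the task to proving that $S$ is infinite.

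I will argue by contradiction. Suppose $S$ has a maximal element $\sigma^*$; put $l^* = |\Gamma(\sigma^*)|$ and $x^* = f(l^*)$. Expansionarity forces any $\subfun$-successor of $\sigma^*$ in $S$ to be $\sigma^* \concat [x^*]$, so maximality of $\sigma^*$ splits into Case A, in which $\sigma^* \concat [x^*] \nin T'$, or Case B, in which $\sigma^* \concat [x^*] \in T'$ but $\Gamma(\sigma^* \concat [x^*])$ first disagrees with $f$ at some position $p \geq l^* + 1$. The plan in both cases is to exhibit a finite stage $s_0$ beyond which no enumeration into $T$ produces $f \restr{m}$ for $m$ past a bounded threshold ($l^* + 1$ in Case A, $p + 1$ in Case B); combined with the fact that each stage of the construction enumerates only finitely many strings into $T$, this forces only finitely many $f \restr{m}$ into $T$, contradicting $f \in [T]$.

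In Case B, once $\Gamma_s(\sigma^* \concat [x^*])$ stabilizes, every $\tau \supfun \sigma^* \concat [x^*]$ has $\Gamma_{s+1}(\tau) \supfun \Gamma(\sigma^* \concat [x^*])$ by expansionarity, hence incompatible with $f$ at $p$. In Case A, \cref{lem:gamma-lim-dne} leaves $\Gamma_s(\sigma^* \concat [x^*])$ either eventually divergent---blocking $\Gamma_{s+1}$-enumerations through extensions---or oscillating through values $\Gamma(\sigma^*) \concat [x^*] \concat [k]$ whose large $k$ eventually exceeds $f(l^* + 1)$, yielding incompatibility at position $l^* + 1$. Expansionarity propagates these incompatibilities to every other potential contributor: strings $\tau$ with $\tau(|\sigma^*|) \neq x^*$ already disagree with $f$ at position $l^*$; strings $\tau \subfun \sigma^*$ contribute only prefixes of length $\leq l^*$; and the second-half padding enumerations $\Gamma_{s+1}(\sigma) \concat [x]$ inherit their incompatibility from $\Gamma_{s+1}(\sigma)$. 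The main technical obstacle will be the oscillation subcase of Case A, where I must verify that the ``large $k$'' design of the second-half padding---the ``don't commit'' construction---really does prevent spurious prefixes of $f$ from entering $T$, which is precisely the purpose of that design feature.
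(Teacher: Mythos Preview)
Your proposal is correct and follows essentially the same approach as the paper's proof: both take the maximal \( \sigma^* \) with \( \Gamma(\sigma^*) \subfun f \), analyze the eventual behavior of \( \Gamma_s(\sigma^*\concat[x^*]) \) (either settled on an incompatible value, eventually undefined, or repeatedly redefined with a fresh large last coordinate), and conclude that only finitely many initial segments of \( f \) above \( \Gamma(\sigma^*)\concat[x^*] \) can be enumerated into \( T \). Your explicit split into Cases A and B and the itemized accounting of the second-half padding enumerations are exactly the details the paper compresses into a single sentence, but the argument is the same.
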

\begin{proof}
Suppose, for a contradiction, that \( f \in [T] \) but that \( f \) isn't the image of any \( g \in [T'] \).  As \( \Gamma \) is expansionary, it therefore follows that there must be some longest \( \sigma \) such that \( \Gamma(\sigma) \subfun f \).  Let \( x \) be such that \( \Gamma(\sigma)\concat[x] \subfun f \).  But once \( \Gamma(\sigma) \) settles and  \( \Gamma_s(\sigma\concat[x]) \) is guaranteed to never agree with \( f \) again \textemdash either because it has settled on it's own value or because it has become undefined and no string of the form \(  \Gamma(\sigma)\concat[x]\concat[k] \) with \( k \) chosen large after \( s \) will agree with \( f  \) \textemdash then we never again see any \( \tau \) with \( \Gamma_s(\tau) \supfun \Gamma(\sigma)\concat[x] \).  But this means we only add finitely many elements to \( T \) extending \( \Gamma(\sigma)\concat[x] \) contradicting the assumption that \( f \in [T] \).     
\end{proof}

We now verify that \( \Gamma \) is computable in \( \jump{X} \) provided \( T' \) was computable from \( X \).

\begin{lemma}\label{lem:gamma-comp-if-tprime}
If \( \recfnl{e}{\jump{X}}{} \) is total then \( \Gamma \Tleq \jump{X} \).   
\end{lemma}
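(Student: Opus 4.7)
The plan is to compute $\Gamma(\sigma)$ from $\jump{X}$ by first deciding membership of $\sigma$ in $T'$ and, when $\sigma \in T'$, locating the stabilization value of the $X$-computable approximation $\Gamma_s(\sigma)$ using $\jump{X}$ to answer a single co-r.e.-in-$X$ question about that approximation. Under the hypothesis of totality, $T'$ is directly $\jump{X}$-computable since $\sigma \in T'$ iff $\recfnl{e}{\jump{X}}{\sigma} = 1$. Inspecting the construction in \cref{ssec:first-half-stage} also shows that the stagewise approximation $T'_s$ is uniformly $X$-computable (it was fixed as such in the construction overview), and hence the stagewise function $\Gamma_s$, defined by bounded search in $s$ steps using $T'_s$ and the $s$-step approximations to the relevant functionals, is itself uniformly $X$-computable.

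Given this, the algorithm to compute $\Gamma(\sigma)$ from $\jump{X}$ proceeds as follows. First I would query $\jump{X}$ to decide whether $\sigma \in T'$; if not, output $\diverge$, which is correct by \cref{lem:gamma-lim-dne}. If $\sigma \in T'$, then \cref{lem:tprime-converge} guarantees that $\Gamma(\sigma)\conv$, so the sequence $\Gamma_s(\sigma)$ stabilizes at some finite stage. I would then enumerate pairs $(s, \tau)$ with $\Gamma_s(\sigma) = \tau$ (an $X$-computable test) and, for each such pair, use $\jump{X}$ to decide the co-r.e.-in-$X$ question whether $\Gamma_t(\sigma) = \tau$ holds for all $t \geq s$, outputting the first $\tau$ so witnessed.

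The essentially only obstacle to overcome is guaranteeing that this search terminates, but this is precisely what the preceding convergence lemmas secure: once $\sigma \in T'$ and the finitely many proper initial segments of $\sigma$ have been settled, the approximation $\Gamma_s(\sigma)$ must eventually reach its limit. Uniformity of the resulting reduction in the data $(e_0, e, l)$ supplied to the construction follows from the uniform $X$-computability of $\Gamma_s$ together with the uniformity of the co-r.e.-in-$X$ query used in the search, so an index for $\Gamma$ relative to $\jump{X}$ is obtained effectively as required.
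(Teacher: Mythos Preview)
Your proof is correct and follows essentially the same route as the paper: both rely on the fact that the stagewise approximation \( \Gamma_s(\sigma) \) is a total \( X \)-computable function (with \( \diverge \) as a value) and then use \cref{lem:tprime-converge,lem:gamma-lim-dne} to conclude that the limit exists and is computable from \( \jump{X} \). The only cosmetic difference is that the paper computes the limit directly by treating \( \diverge \) as a special symbol, whereas you first branch on membership in \( T' \) and handle the two cases separately; neither organization offers a substantive advantage over the other.
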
  
\begin{proof}
We note that \( \Gamma_s(\sigma) \) is a computable function of \( s \) and \( \sigma \) not just a partial computable function, i.e., the set \( \set{(s, \sigma)}{\Gamma_s(\sigma)\conv} \) is computable.  This follows directly from the rule for generating \( \Gamma_{s+1} \) from \( \Gamma_s \).  Since  \( \recfnl{e}{\jump{X}}{} \) by \cref{lem:tprime-converge,lem:gamma-lim-dne} we know that \( \lim_{s \to \infty} \Gamma_s(\sigma) \) always exists if we treat \( \diverge \) as if it was just a special symbol.  Since \( \Gamma_s(\sigma) \) is a computable function in the sense above it is  easy to see that \( \jump{X} \) can compute \( \Gamma(\sigma) \).    
\end{proof}

With these lemmas established, we now prove \cref{lem:tree-minus-one}.  

\begin{proof}
Evidently \( T \) is a tree, \cref{lem:gamma-comp-if-tprime} handles the computability of \( \Gamma \) and we already concluded that \( \Gamma \) is expansionary.  This leaves checking each of the numbered conditions.

(\ref{lem:tree-minus-one:comp}) The computability of \( T \) follows immediately from the fact that \( \sigma \in T \) iff \( \sigma \in T_{\godelnum{\sigma} + 1} \) since our stagewise approximations to \( T \) are uniformly computable in \( X \).     

(\ref{lem:tree-minus-one:nice}) \( T \) will be nice pursuant to the discussion in \cref{ssec:niceness}.

(\ref{lem:tree-minus-one:copy})  We directly stipulated that \( T\restr{l} = T'\restr{l} \) and that \( \Gamma \) is the identity on \( T'\restr{l} \).    

(\ref{lem:tree-minus-one:expansionary}) The construction only ever defines \( \Gamma(\sigma\concat[x]) \) to be some extension of \( \Gamma(\sigma)\concat[x] \).  

(\ref{lem:tree-minus-one:dom-zeta}) \Cref{lem:tprime-converge} proves \( \dom \Gamma \supset T' \) and \cref{lem:gamma-lim-dne} ensures \( \dom \Gamma \subset T' \).

(\ref{lem:tree-minus-one:homo}) This follows from \cref{lem:paths-all-from-tprime} via  \cref{lem:expan-homeo}.         

(\ref{lem:tree-minus-one:subg}) This is immediate from \cref{lem:req-gi-converges} as we meet the requirements \req{G}{i}. 

(\ref{lem:tree-minus-one:jump-inv}) By \cref{lem:req-gi-converges} if \( f \in [T] \)  we can compute \( \jump{(f \Tplus X)} \) from \( f \Tplus \jump{X} \) by searching for \( \sigma \subfun f \) that either witnesses \( i \in \jump{(f \Tplus X)} \) or witnesses the fact that no extension of \( \sigma \) in \( T \) has that property.   Given \( f \in [T] \) we can compute \( g \in [T'] \) \textemdash guaranteed to exist by \cref{lem:paths-all-from-tprime} \textemdash with \( \Gamma(g) = f \) uniformly in \( \jump{X} \) since even if \( \dom \Gamma \) isn't computable in \( \jump{X} \) it is r.e. in \( \jump{X} \).   Thus, \( f \Tplus \jump{X} \) can enumerate \( \sigma \in T' \) with \( \Gamma(\sigma) \subfun f \) and thereby compute \( g \).  Computing \( f \) from \( g \Tplus \jump{X} \) follows by similar reasoning.  

  (\ref{lem:tree-minus-one:path-noncomp}) This is immediate from the fact that \( T \) is nice and \cref{lem:req-gi-converges} since niceness ensures that if \( \recfnl{i}{X}{} \) is total then no path through \( T \) can strongly avoid the set of strings which disagree with \( \recfnl{i}{X}{} \).

  (\ref{lem:tree-minus-one:pair-noncomp}) This is immediate from \cref{lem:req-ri-converges}.

  (\ref{lem:tree-minus-one:splitting}) This is immediate from \cref{lem:req-si-converges}. 
\end{proof}

\subsection{Building The Tower}\label{ssec:build-std-tower}

We now sketch a proof of \cref{thm:standard} \textemdash the standard version of the main theorem.  We only sketch a proof here as we will later give a full proof of this result based on the construction we use in the next section to prove the non-standard version of the theorem.  However, it may be helpful to understand how the proof works in the standard case before tackling the more complicated machinery in the next section.  Readers who want more detail about how the standard construction would work are urged to consult \cite{Gerdes2023PiSingleton} (particularly the appendix) \textemdash where a similar result was proved for standard ordinals but with a different version of the pulldown lemma that worked in double jumps..

To prove \cref{thm:standard} it suffices to build a subgeneric tower of height \( \alpha \) with \( T_{\alpha} = T \) where \( T \) is some \( \zeron{\alpha} \) computable tree.  
 We face a few challenges to constructing this subgeneric tower of height \( \alpha \).  First, we need to extend the idea that \( T_n\restr{n} = T_\omega\restr{n} \) into the transfinite.  The obvious generalization would be to say that \( T_{\Olim{\lambda}{n}}\restr{n} = T_{\lambda}\restr{n} \).  But what if \( \Olim{\lambda}{n} \Oadd 1 \) is actually \( \Olim{\alpha}{m} \) for some other notation \( \alpha \)?  How do we make sure all the demands imposed by higher levels are compatible and automatically satisfied when we use \hyperref[lem:tree-minus-one]{the pulldown lemma} to build \( T_{\beta} \) from \( T_{\beta \Oadd 1} \)?  One answer is to build a computable function of \( \alpha, \beta \) that tells us the length of strings \( T_{\alpha} \) and \( T_\beta \) should agree on \textemdash at least for strings which extend to infinite paths \textemdash which has the following properties.

    \begin{definition}\label{def:good-copylen}
Say that \( l^{\alpha}_{\gamma} \) is an acceptable copy-length function for a tower \(T_{\beta}, \Gamma, I \) if \( l^{\alpha}_{\gamma} \) is a computable function of \( \alpha, \gamma \) that is defined and satisfies the following whenever \( \gamma \Oless \alpha \in I \) 
\begin{enumerate}
\item\label{def:good-copylen:limit} \(\displaystyle \lim_{n \to \infty} l^{\alpha}_{\Olim{\alpha}{n}} = \infty \text{ when } \alpha \text{ is a limit notation} \)
\item\label{def:good-copylen:between}  \( \displaystyle l^{\alpha'}_{\gamma'} \geq l^{\alpha}_\gamma \) whenever \( \displaystyle \gamma \Oleq \gamma' \Oless \alpha' \Oleq \alpha  \) 
\end{enumerate} 
\end{definition}

   Another problem is that merely specifying how far we should agree with various other trees isn't quite enough since we want a specific definition of \( T_{\beta} \) in terms of copying from one other tree\footnote{While it's not strictly necessary to identify a single source tree to copy from, it is necessary to ensure that larger limit notations only determine \( T_{\beta}\restr{l} \) for some finite bound \( l \) and to turn this into an effective construction of \( T_{\beta} \).  Thus, while \cite{hjorth_argument_2003} doesn't explicitly identify a particular notation to copy from Hjorth still grapples with the same issue in his definition of \( \alpha_{n, \lambda} \).}, say, \( T_{\beta^{\Diamond}} \) where \( \beta^{\Diamond} \) is some limit notation between \( \beta \) and \( \alpha \).  This has to be chosen so that demanding \( T_{\beta} \) copy \( T_{\beta^{\Diamond}} \) on strings of length at most \( l^{\beta^{\Diamond}}_{\beta} \) is enough to guarantee that our acceptable copy-length function is respected.   A uniform method for identifying such a \( \beta^{\Diamond} \)  is given in  \cite{Gerdes2023PiSingleton} and in the next section a similar special notation will fall out of our tree representation.

   Our final problem is that if we want \( T_{\beta} \) to copy some initial segment of \( T_{\beta^{\Diamond}} \)  we need to be able to ensure that the part copied down to \( T_\beta \) is \( \zeron{\beta} \) computable.  We reproduce a version of the lemma from \cite{Gerdes2023PiSingleton} that, when \( \lambda \) is a limit, will allow us to find a version of \( T_{\lambda} \),  \( T^{*}_{\lambda} \),  where membership in \( T_{\lambda}\restr{l_n} \) is uniformly computable in \( \zeron{\Olim{\lambda}{n}} \).    

  \begin{lemma}\label{lem:tree-uniform-limit}
  If \( T \Tleq X \), \( X \Tequiv \TPlus_{i \in \omega} X_i \) and \( l_n \) is a strictly monotonic computable function of \( n \) then there is a tree \( T^{*} \supset T \) with \( [T^{*}] = [T] \) such that  \( T^{*}\restr{l_n} \) is uniformly computable in \( X_{\leq n} = \TPlus_{i \leq n} X_i \).   Furthermore, if we already have \( T\restr{l_{-1}} \) is computable in \( X_0 \) we can set \( T\restr{l_0} = T^{*}\restr{l_0} \). This holds with full uniformity and if we know that \( T \) is nice we can ensure that  \( T^{*} \) is nice as well.   
  \end{lemma}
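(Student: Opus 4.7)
The plan is to use a Turing reduction witnessing $T \Tleq X$, fix an index $e$ with $T = \recfnl{e}{X}{}$, and build $T^*$ so that the decision about whether $\sigma \in T^*$ at each level uses only the oracle and time appropriate to that level. For a string $\sigma$ let $N(\sigma)$ be the least $n$ with $l_n \geq \lh{\sigma}$. I would declare $\sigma \in T^*$ iff for every $\sigma' \subfun \sigma$ the bounded computation $\recfnl[l_{N(\sigma)}]{e}{X_{\leq N(\sigma)}}{\sigma'}$ does not converge to $0$---in other words, put $\sigma$ into $T^*$ unless, using only $X_{\leq N(\sigma)}$ and $l_{N(\sigma)}$ many steps, some prefix of $\sigma$ has already been certified as lying outside $T$.

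With the definition in hand, several of the required verifications are routine. Downward closure of $T^*$ and uniform $X_{\leq n}$-computability of $T^*\restr{l_n}$ follow from monotonicity of Turing computations in both oracle and time bound: a bounded computation that converges to $0$ using a smaller oracle and shorter time bound still converges to $0$ when those bounds are enlarged, so the level-$N(\sigma)$ check passing forces every level-$N(\sigma')$ check for $\sigma' \subfun \sigma$ to pass as well. Inclusion $T \subseteq T^*$ is immediate, since $\recfnl{e}{X}{\sigma'} = 1$ on prefixes of any $\sigma \in T$ forces every bounded version to either output $1$ or diverge. The heart of the argument is $[T^*] \subseteq [T]$: supposing $f \in [T^*]$ while $f\restr{k} \notin T$ for some $k$, the true computation $\recfnl{e}{X}{f\restr{k}} = 0$ has finite use $u$ and finite halting time $t$; choosing $k' \geq k$ large enough that $X_{\leq N(f\restr{k'})}$ contains that use and $l_{N(f\restr{k'})} \geq t$, the level-$N(f\restr{k'})$ check applied to the prefix $f\restr{k}$ must then converge to $0$, contradicting $f\restr{k'} \in T^*$.

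To preserve niceness I would post-compose the above with the forced niceness override at lengths $\equiv 3 \pmod{4}$: for such $\sigma$ declare $\sigma \in T^*$ iff $\sigma^- \in T^*$ and $\sigma(\lh{\sigma}-1) \leq 1$. Since $T$ already satisfies the niceness rule, $T \subseteq T^*$ survives, and any spurious infinite branch introduced by the override is still pruned at the next higher level when the bounded-computation check catches the error, so $[T^*] = [T]$ continues to hold. For the ``furthermore'' clause one pre-populates $T^*$ on the initial segment of strings of low length directly from the given $X_0$-computable piece of $T$, letting the bounded-computation recipe take over only above that. Uniformity in indices is visible from the definition itself, the only ingredients being $e$, the index for the enumeration $i \mapsto X_i$, and the computable function $n \mapsto l_n$.

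The principal obstacle, which this construction is tailored to circumvent, is the tension between compressing the complexity of $T^*\restr{l_n}$ down to $X_{\leq n}$ and avoiding any spurious infinite paths that $T$ itself does not contain. Rechecking every prefix afresh at the current level---rather than locking in a decision at the earliest level where the prefix first became accessible---is precisely what allows the construction to eventually catch any non-$T$ prefix as more of $X$ is revealed, so the generous padding needed to certify a string as potentially in $T$ at low levels cannot survive indefinitely as $n$ grows.
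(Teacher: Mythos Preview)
Your approach is essentially the paper's: both let $N(\sigma)$ be the least $n$ with $l_n \geq \lh{\sigma}$ and put $\sigma$ into $T^*$ unless some prefix is already certified out of $T$ using only level-$N(\sigma)$ resources, then argue $T \subseteq T^*$, $[T^*]=[T]$, niceness, and the furthermore clause exactly as you do.

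There is one point where your implementation, read literally, breaks. You feed $X_{\leq N(\sigma)} = \TPlus_{i \leq N(\sigma)} X_i$ to the machine as a total oracle; but this set has $0$s in all columns $> N(\sigma)$, and the reduction $\recfnl{e}{\cdot}{}$ was designed for oracle $X$, so it may query such a column within $l_{N(\sigma)}$ steps and halt with output $0$ even when $\sigma' \in T$. That would falsify your claim that ``$\recfnl{e}{X}{\sigma'}=1$ forces every bounded version to either output $1$ or diverge,'' and hence break $T \subseteq T^*$; the same issue undermines your monotonicity argument for downward closure, since enlarging the oracle from $X_{\leq m}$ to $X_{\leq m'}$ can change answers at positions in columns between $m$ and $m'$. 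The paper avoids this by keeping $X$ itself as the oracle and instead bounding the step count by $u_n$, the largest use that touches only columns $\leq n$; with that single adjustment (or, equivalently, treating $X_{\leq n}$ as a partial oracle that forces divergence on out-of-range queries) your argument goes through verbatim.
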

  Here, full uniformity means that indexes for \( T^{*}\restr{l_n} \) as \( X_{\leq n} \) computable sets can be computed from indexes for \( T \) and the function \( l_n \) for the main result.  For the furthermore claim, one also needs \( l_{-1} \) and an index for \( T\restr{l_{-1}} \) as a \( X_0 \) computable set. 
  \begin{proof}
  WLOG we can assume that \( X = \TPlus_{i \in \omega} X_i \) and that \( \recfnl{e}{X}{\sigma} \) is the characteristic function of \( T \).  Now let \( u_n \) be the largest value such that every \( x < u_n \) is of the form \( \pair{n'}{y} \) with \( n' \leq n \), i.e., the largest use of a computation from \( X \) which only consults \( X_{\leq n} \).  Given \( \sigma \) let \( n \) be the least value such that \( l_n \geq \lh{\sigma} \) and place \( \sigma \in T^{*} \) iff there is no \( l \leq \lh{\sigma} \) and \( s \leq u_n \) such that \( \recfnl[s]{e}{X}{\sigma\restr{l}}\conv = 0 \).

  Thus, \( T^{*}\restr{l_n} \) consists of those strings such that no substring is computed to be out of \( T \) via a computation that only consults \( X_{\leq n} \).  Thus, \( T \subset T^{*} \) and  \(  T^{*}\restr{l_n} \Tleq X_{\leq n} \).  The uniformity is obvious and given any \( f \nin [T] \) there is some \( \sigma \subfun f \) not in \( T \).  Thus, all sufficiently long extensions of \( \sigma \) aren't in \( T^{*} \) so \( [T^{*}] = [T] \).   The furthermore is a trivial modification and the uniformity is clear.  Since \( T^{*} \supset T \) and \( T \) is nice it is enough to keep \( \sigma\concat[i] \) out of \( T^{*} \) whenever \( \lh{\sigma} \equiv 2 \pmod{4} \) and either \( i > 1 \) or \( \sigma\concat[1-i] \nin T^{*} \).               
  \end{proof}  

  With this version of the lemma in hand we sketch the approach to building a subgeneric tower.  As a tower of height \( \alpha \)  consists of two computable functionals \textemdash one which builds \( T_\beta \) from \( \zeron{\beta} \) and  \(  \Gamma^{\gamma}_{\beta}(\zeron{\beta}; \sigma)  \)  we can think of a tower as coded by an r.e. set with some index \( i \).  We now define a computable function \( q(i) \) so that \( q(i) \) is an index for a tower whose top element is \( T \) \textemdash or at least a version modified to be nice \textemdash and which defines \( T_{\beta} \) by using \hyperref[lem:tree-minus-one]{the pulldown lemma} to build it from the version of \( T_{\beta \Oadd 1} \) given by \( i \).  We will also, as described above, computably identify some limit notation \( \lambda \in I \) above \( \beta \) and would ideally ensure that \( T_{\beta}\restr{l^{\lambda}_{\beta}} =   T_{\lambda}\restr{l^{\lambda}_{\beta}} \) regardless of how \( i \) tells us  \( T_{\beta \Oadd 1} \) behaves.  However, we must slightly relax this demand to merely requiring equality on strings which extend to paths and otherwise allow \( T_{\beta}\restr{l^{\lambda}_{\beta}} \supset   T_{\lambda}\restr{l^{\lambda}_{\beta}} \) so we can use \cref{lem:tree-uniform-limit} to ensure the parts copied down are appropriately computable.  We define \( \Gamma^{\beta \Oadd 1}_{\beta} \) to be the functional given by \hyperref[lem:tree-minus-one]{the pulldown lemma} and insist that \( \Gamma^{\lambda}_{\beta}(\sigma) \) should be equal to something like \( \Gamma^{\Olim{\lambda}{\lh{\sigma}}}_{\beta}(\sigma) \).   Finally, we use the recursion theorem to find a fixed-point \( i \)  for \( q \) and we claim that \( i \) codes for the desired tower.      

    This amounts to saying that we use \hyperref[lem:tree-minus-one]{the pulldown lemma} to build trees from their successors and at limit stages we ensure that as \( n \) goes to infinity \( T_{\Olim{\lambda}{n}} \) agrees with \( T_{\lambda} \) on longer and longer initial segments on which \( \Gamma^{\lambda}_{\Olim{\lambda}{n}} \) is the identity.  While this captures the intuition, as we will see in the next section there are many annoying details which need to be managed by the construction to give a rigorous proof.

\section{Non-Standard Construction}\label{sec:non-standard-tower}

To prove \cref{thm:non-standard} we need to somehow modify the approach above to build a subgeneric tower of height \( \wck \), i.e., a subgeneric tower which has a tree for every notation in some path \( \kleeneO1 \) through \( \kleeneO \).   One approach to this problem is to reason model-theoretically and argue there should be  non-standard models where \cref{thm:standard} holds and apply that result to a non-standard ordinal notation in such a model.  In this context, non-standard means believing that certain linear orders are actually well-orders even though they aren't truly well-ordered.  Intuitively, if we applied this theorem to these non-standard notations we could extract a computable tree whose paths are \( < \wck \) generic since the non-standard notation would bound all the standard notations.

Unfortunately, model theory doesn't offer us a magic wand we can wave over our proof in the standard case to produce the non-standard result.  Making the above argument rigorous would require careful work to ensure that not only do the desired non-standard models exist but also to establish the appropriate absoluteness results to show that the computable tree \( T_0 \) we extract has the appropriate properties.  This is a perfectly reasonable way to approach the problem, but we instead choose to give a more concrete version of the proof framed in a computability theoretic context.  

  The model-theoretic intuition that there are in some sense `fake' ordinal notations above the true notations will still guide our approach.  However, instead of working with some model's idea of a non-standard ordinal notation we will work with the top element of a particular computable linear order that isn't a well-order but doesn't have any hyperarithmetic decreasing sequences.   This has the advantage of letting us leverage the approach we used in \cref{ssec:prob71} of turning a computable tree with only non-hyperarithmetic branches into a representation of a path through \( \kleeneO \). 

   \subsection{Construction Overview}

  We will start with a tree \( U \) which has only non-hyperarithmetic branches, define \( U^{+}, \rho \) as in \cref{ssec:prob71} and let \( f_U \) be the leftmost path through \( U \). Thus,  \( U^{+} \) is also a computable tree with \( [U^{+}] = [U] \) and we have a computable function \( \rho \) mapping elements in \( U^{+} \) to the ordinal notations for their height under \( \KBless \).  Our subgeneric tower will be defined with \( I \eqdef \kleeneO1 \eqdef \set{\rho{\sigma}}{\sigma \in U^{+} \land \sigma \leftof f_U}  \) where we recall that \( U^{+}_{\leftof f_U} \) is the well-founded initial segment of \( U^{+} \) under \( \KBless \) and therefore \( \kleeneO1 \) is a path through \( \kleeneO \).   We will start with \( T_{\estr} \) (the top tree in our `tower') and from this define \( T_{\sigma} \) for all \( \sigma \in U^{+} \).  The resulting definition will appear to be subgeneric tower defined with \( I \) equal to the image of \( U^{+} \) under \( \rho \)  but technically only the well-founded initial segment will qualify as a subgeneric tower.    

  The reason for using the tree \( U^{+} \) rather than just working abstractly with respect to some unique set of notations \( \kleeneO1 \) is similar to the reason we used such a tree in \cref{ssec:prob71}  \textemdash having elements above all standard notations helps us deal with all notations in \( \kleeneO1 \) in a uniform and coherent fashion.  In particular, the presence of a concrete tree will give us a much more elegant way to solve the problem from \cref{ssec:build-std-tower} of deciding what tree to copy from than the approach in \cite{Gerdes2023PiSingleton}.

For notational simplicity we will often write \( T_{\vartheta} \) instead of \( T_{\rho(\vartheta)} \) and \( \zeron{\vartheta} \) instead of \( \zeron{\rho(\vartheta)} \) but will use \( \rho \) as necessary to avoid confusion.  Since \( \rho \) is monotonic on \( U \) we can elide the distinction elements of \( U^{+} \) and their images without concern even in constructions, e.g., we can define \( \zeron{\vartheta} \) in terms of columns \( \sigma \KBless \vartheta \) rather than \( \rho(\sigma) \) and be assured the result will be equivalent up to Turing degree.  Also, as we did in the last sentence, it will be convenient to assume the relation  \( \KBless \) is restricted to elements in \( U^{+} \) so that \( \theta \KBless \vartheta \) entails both strings are in \( U^{+} \).

  We build a tower and a computable function \( l^{\sigma}_{\tau} \) satisfying the criteria of an acceptable copy-length function \textemdash though formally it is \( l^{\rho(\sigma)}_{\rho(\tau)} \) that satisfies  \cref{def:good-copylen} \textemdash which the tower will respect.  In addition to the computable function \( l^{\sigma}_\tau \) this means building two computable functionals, \( \Upsilon, \Gamma \) with \( \Upsilon \) responsible for computing \( T_{\sigma}  \)  from  \(\zeron{\sigma} \).   While the surface form of the construction below will be a direct construction of \( \Gamma \) and the trees \( T_{\sigma} \) keep in mind that the true construction will be of a computable function \( q \) transforming an index for the pair \( \Upsilon, \Gamma \) and the true functionals \( \Upsilon, \Gamma \) will be those whose index is given by a fixed point of \( q \).   


  \subsection{Defining \texorpdfstring{\( \zeron{\theta} \)}{\textbf{0}ᶿ}}\label{ssec:defining-zero-theta}

  Since we need to define \( T_{\theta} \)  even when \( \theta \nin U^{+}_{\leftof f_U} \), e.g., \( T_{\estr} \), this raises the question of how to understand \( \zeron{\theta} \) when \( \rho(\theta) \nin \kleeneO \).  While it might seem like it should be enough to know that any nice\footnote{For instance, that every element is a finite successor of \( 0 \) or a limit.} computable linear order doesn't admit any infinite  descending hyperarithmetic sequences  to define something which behaves like a jump hierarchy on it this is refuted in 
 \cite{Friedman1976Uniformly}.  However, that same paper motivates the following lemma which solves the problem.

  \begin{lemma}\label{lem:unif-no-hyp-and-jump}
    There is a computable tree \( U \) with \( [U] \neq \eset \) but \( [U] \isect \HYP = \eset \) such that \( U^{+} \) under \( \KBless \) supports a jump hierarchy.   This result relativizes with all possible uniformity. 
  \end{lemma}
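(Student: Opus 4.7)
The plan is to construct $U$ as a Harrison-style computable tree whose branching pattern encodes, at the level of its Kleene-Brouwer ordering, enough limit-sequence data to support an iterated jump hierarchy throughout $U^+$ — not merely along the $\omega_1^{CK}$-initial segment. Concretely, I would first invoke a Friedman-style construction of a computable linear ordering $\prec$ on $\omega$ with no hyperarithmetic descending chain, built so that each $\prec$-element comes equipped with a uniformly computable classification into ``successor'' or ``limit,'' and, in the limit case, a uniformly computable cofinal $\omega$-sequence of $\prec$-predecessors. This is the content \cite{Friedman1976Uniformly} pays explicit attention to, since (as noted just before the lemma) the bare property of having no hyperarithmetic descending chain is not enough to guarantee the existence of a jump hierarchy.

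Next, I would realize $\prec$ as the Kleene-Brouwer ordering of a computable tree in a way that preserves both the limit structure and the effective cofinal sequences. A clean way to do this: let $U$ be the computable tree of finite $\prec$-descending sequences, coded so that for each $\prec$-element $n$ its canonical cofinal sequence (when $n$ is a $\prec$-limit) appears as the sequence of immediate successor-extensions in $U$ at the corresponding node. Then $[U] \neq \eset$ because $\prec$ is ill-founded, $[U] \isect \HYP = \eset$ because any hyperarithmetic path would yield a hyperarithmetic $\prec$-descending sequence, and the induced $\KBless$-ordering on $U^+$ contains, as an initial segment extending cofinally past $\omega_1^{CK}$, an order-isomorphic copy of $\prec$ together with its attendant limit-sequence data (the remaining strings in $U^+$, corresponding to the $\eta$-expansions of \cref{def:shift-right}, provide exactly the KB-successor links).

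Finally, I would define the jump hierarchy $\{H_\sigma : \sigma \in U^+\}$ by recursion on $\KBless$: at KB-successors (the strings in $U^+ \setminus U$), take the ordinary Turing jump of $H_{\sigma^-}$; at KB-limits (the strings in $U$), take the effective join of $H$ along the canonical cofinal sequence inherited from $\prec$. That this recursion is well defined throughout $U^+$, even on its ill-founded part, is precisely what the careful choice of $\prec$ buys us. The relativization is then routine: run the entire construction with oracle $X$, noting that Friedman's construction, the reduction to a tree, and the jump-hierarchy recursion are all uniform in any oracle.

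The main obstacle is the first step — constructing $\prec$. For an arbitrary pseudo-well-ordering the jump hierarchy can fail to exist at ill-founded limits because no effective cofinal sequence is available; ensuring such sequences exist cofinally \emph{everywhere} in $\prec$, including past $\omega_1^{CK}$, requires the construction to plan for the ill-founded part from the outset, weaving in the limit-sequence data during the effective enumeration. Once $\prec$ is in hand, realizing it as a KB-order on a tree and transferring the hierarchy are straightforward bookkeeping.
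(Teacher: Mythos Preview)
Your proposal has a genuine gap at the crucial step. You plan to construct $\prec$ with uniformly computable successor/limit data and cofinal sequences, then ``define the jump hierarchy by recursion on $\KBless$,'' asserting that the careful choice of $\prec$ makes this well-defined even on the ill-founded part. But on an ill-founded order a transfinite recursion is not a definition: it is a system of fixed-point equations of the shape \eqref{eq:def-H-recursive}, and the \emph{existence} of a global solution is the entire content of the lemma. Having uniformly computable cofinal sequences at every limit is not by itself sufficient to guarantee a solution --- indeed the remark immediately preceding the lemma cites \cite{Friedman1976Uniformly} precisely for the \emph{negative} result that such niceness conditions do not suffice. You are deferring the hard part to that citation while leaving unexplained what additional property of $\prec$ would force the fixed-point system to be solvable on the ill-founded tail, and why a $\prec$ with that property exists. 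Your final paragraph acknowledges this is ``the main obstacle'' but does not discharge it.

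The paper's argument is completely different and dissolves rather than solves this difficulty, via a self-referential trick. Using the recursion theorem, one builds $U$ so that the paths through $U$ are (after peeling off a $\zeroj$ column) exactly the functions $h$ satisfying the jump-hierarchy equation \eqref{eq:def-H-recursive} on $U^{+}$ itself. Then $[U] \neq \eset$ follows by contradiction: if $[U]$ were empty, $\KBless$ on $U^{+}$ would be a genuine well-order, a jump hierarchy would exist by ordinary transfinite recursion, and that hierarchy would be a path through $U$. Hence any path through $U$ literally \emph{is} a witness that $U^{+}$ supports a jump hierarchy, and no such path can be hyperarithmetic since there is no hyperarithmetic ill-founded jump hierarchy. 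The existence of the jump hierarchy and the nonemptiness of $[U]$ are arranged to coincide, so one never has to produce a solution to the fixed-point system by hand.
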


  Intuitively, supporting a jump hierarchy just means allowing for the definition of \( \zeron{\sigma} \) in a way that satisfies all the expected inductive properties.  Formally speaking, it is understood to mean that there is a function \( h \) satisfying  \eqref{eq:def-H-recursive}.

  \begin{proof} 

  Let \( A(h, e) \) be a  predicate asserting that \( e \) is an r.e. index for a tree \( U \) which satisfies the following inductive definition for all strings \( \theta \).  Recall that in  \cref{ssec:prob71} we defined \( U^{+}, \rho \) in terms of \( U \) as well as the operations \( \vartheta^{+} \) \textemdash giving the successor of \( \vartheta  \) under \( \KBless \) \textemdash  and \( \eta(\theta) \) \textemdash giving the \( \KBless \) least element \( \tau \supfun \theta \) (so \( \eta(\estr) \) should be the \( 0 \) element).

\begin{equation}\label{eq:def-H-recursive}
     \setcol{h}{\theta} = \begin{cases}
                        \eset & \text{if } \theta \nin U^{+} \\
                        X & \text{if } \theta = \eta(\estr) \\
                        \TPlus_{n \in \omega} \setcol{h}{\theta\concat[n]} & \text{if } \theta \in U\\
                        \jump{(\setcol{h}{\vartheta})}  & \text{o.w. where } \theta = \vartheta^{+} \\
                         \end{cases}
  \end{equation}  

    We also implicitly assume that \( A(h,e) \) only holds if \( \eta(\estr)\conv \), i.e., \( U^{+} \) has a \( \KBless \) least element, and that \( \vartheta^{+} \) is defined for every \( \vartheta \in U^{+} \). 

 We claim that \( A(h,e) \) is (up to Turing degree) a \( \pizn[\zeroj]{1} \) property and that we can define \( \hat{A}(h_0 \Tplus h, e) \) where this is a \( \pizn{1} \)  property which holds only if \( h_0  = \zeroj \) and \( A(h, e) \) holds.  We don't give details about our specific definition of \( \hat{A} \) since it would have sufficed to simply use the fact that \( A(h,e) \) was \( \sigmain{1} \).  This gives us a computable function \( f(e) \) such that \( f(e) \) codes for the computable tree given by \( \hat{A}(\cdot, e) \), i.e., a tree whose paths are the solutions to \( \hat{A}(\cdot, e) \).      

  Let \( U \) be the computable tree whose index is a fixed point of \( f \).  Since \( \hat{A}(\cdot, e) \) is computable regardless of the value of \( e \) we end up with a computable, not merely r.e., tree \( U \).  We now argue that this tree has the desired property.  If \( [U] = \eset \) then \( \KBless \) is a well-order so there is clearly some solution \( h \) to \( A(h,e) \) and therefore  \( \zeroj \Tplus h \) is a path through the tree given by \( \hat{A}(\cdot, e) \).  But this tree is also \( U \) contradicting the assumption.  Thus, \( [U] \neq \eset \).

  However, any path through \( U \) computes the function \( h \) satisfying \cref{eq:def-H-recursive} relative to \( U \) thereby witnessing that \( \KBless \) applied to \( U^{+} \) supports some jump hierarchy.   This entails that \( U \) can't have any hyperarithmetic branches since there is no ill-founded hyperarithmetic jump hierarchy (see lemma 3.4 in Ch 3 of \cite{Sacks1990Higher}).  Proving the relativized version simply requires dragging the variable \( X \) through the entire proof and considering the tree of solutions to \( \hat{A}(X,h,e) \) whose first component is \( X \).  
  \end{proof}   

  To keep the presentation readable, we will conduct the rest of our construction as if we have sets \( \zeron{\theta} \) defined for every \( \theta \in U^{+} \) satisfying the recursive definition given above.  That is, we assume that  \( \zeron{\theta} \eqdef  \setcol{h}{\theta}  \)  where \( h \) satisfies \cref{eq:def-H-recursive} for all strings.  However, while \( \zeron{\theta} \) will have a unique definition when \( \theta \in U^{+}_{\leftof f_U} \) the collection of paths through \( U \) will necessarily be perfect with each path corresponding to a way to assign a jump hierarchy to the elements of \( U^{+} \).  Ultimately, this choice won't matter since we only care about the computable tree \( T_0 \) and properties about its paths that are determined on the well-founded initial segment of \( U^{+} \).  We merely need to know that there is some way to assign the sets \( \zeron{\theta} \) to satisfy \cref{eq:def-H-recursive} to verify that \( [T_0] \) contains a subset homeomorphic to \( \baire \).  

  This is only one way to handle this problem.  Alternatively, we could have defined a tower of subtrees of \( \wstrs \cross \wstrs \) with paths through the first component meant to represent the \( \zeron{\beta} \) sets.  This is the approach taken by Hjorth in his manuscript  \cite{hjorth_argument_2003}  solving problem 57\( ^* \).  

  Before we move on, we prove a utility lemma we will need later.

 \begin{lemma}\label{lem:lim-of-mins}
 If \( \sigma \in U \) then \( \zeron{\sigma} \Tequiv \TPlus_{n \in \omega} \zeron{\eta(\sigma\concat[n])} \) and this holds uniformly and is independent of the choice of jump hierarchy. 
 \end{lemma}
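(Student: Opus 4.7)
My plan is to leverage the recursive definition \eqref{eq:def-H-recursive} together with a single identity extracted from \cref{def:shift-right}.  Since $\sigma \in U$, the limit clause of \eqref{eq:def-H-recursive} already gives $\zeron{\sigma} \Tequiv \TPlus_{m \in \omega} \zeron{\sigma\concat[m]}$, which reduces the problem to establishing $\TPlus_m \zeron{\sigma\concat[m]} \Tequiv \TPlus_n \zeron{\eta(\sigma\concat[n])}$ with appropriate uniformity.  The identity I rely on is $\eta(\sigma\concat[m+1]) = (\sigma\concat[m])^{+}$ for every $m \in \omega$, which follows by unpacking \cref{def:shift-right} with $(\sigma\concat[m])^{-} = \sigma$ and $x = m$.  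Because $\eta$ always lands in $U^{+} \setminus U$ and $\sigma \in U$ forces $(\sigma\concat[m])^{+} \neq \eta(\estr)$ (the $\KBless$-minimum of $U^{+}$), the successor clause of \eqref{eq:def-H-recursive} applies and gives the central equation $\zeron{\eta(\sigma\concat[m+1])} \Tequiv \jump{\zeron{\sigma\concat[m]}}$ for every $m$.

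With this identity in hand both reductions are routine.  For $\zeron{\sigma} \Tgeq \TPlus_n \zeron{\eta(\sigma\concat[n])}$ I compute each column $\zeron{\eta(\sigma\concat[n])}$ from $\zeron{\sigma}$: if $\sigma\concat[n] \notin U$ then $\eta(\sigma\concat[n]) = \sigma\concat[n]$ and this is literally the $n$-th column of $\zeron{\sigma}$; if $\sigma\concat[n] \in U$, the least $k_n$ with $\sigma\concat[n]\concat[0^{k_n}] \notin U$ is computable from $n$ (as $U$ is computable), and iterating the limit clause $k_n$ times \textemdash each step a column extraction, justified by $\sigma\concat[n]\concat[0^j] \in U$ for $j < k_n$ \textemdash yields $\zeron{\sigma\concat[n]\concat[0^{k_n}]} = \zeron{\eta(\sigma\concat[n])}$.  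Conversely, the central equation identifies column $m+1$ of $\TPlus_n \zeron{\eta(\sigma\concat[n])}$ as $\jump{\zeron{\sigma\concat[m]}}$, from which $\zeron{\sigma\concat[m]}$ is trivially extractable; joining over $m \in \omega$ then recovers $\zeron{\sigma}$.

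All the reductions are assembled uniformly from the string $\sigma$ and an index for $U$, and the only appeal to the hierarchy $h$ is through the equations \eqref{eq:def-H-recursive} themselves, so the equivalence holds independently of which $h$ was chosen to witness \eqref{eq:def-H-recursive}.  The only real obstacle is the verification of $\eta(\sigma\concat[m+1]) = (\sigma\concat[m])^{+}$, together with the minor check that the successor clause (rather than the base case) of \eqref{eq:def-H-recursive} applies; once those are in place the lemma is essentially an unwinding of the recursion.
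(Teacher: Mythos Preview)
Your proposal is correct and follows essentially the same approach as the paper: both arguments hinge on the identity \( (\sigma\concat[m])^{+} = \eta(\sigma\concat[m+1]) \) together with the successor clause of \eqref{eq:def-H-recursive}, reducing the lemma to the obvious equivalence \( \zeron{\sigma} \Tequiv \TPlus_{m} \zeron{\sigma\concat[m]} \). Your write-up is in fact more explicit than the paper's on the \( \zeron{\sigma} \Tgeq \TPlus_n \zeron{\eta(\sigma\concat[n])} \) direction, supplying the iterated column-extraction argument that the paper leaves tacit.
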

 Recall that \( \eta(\sigma\concat[n]) \) is giving the least element extending \( \sigma\concat[n] \).  By independent of the choice of jump hierarchy we mean that the computation witnessing the equivalence works for any assignment satisfying \cref{eq:def-H-recursive}. 
 \begin{proof}
 It is clear that the claim would be true if the right hand side was replaced with  \( \TPlus_{n \in \omega} \zeron{\sigma\concat[n]}  \).  Thus, it is enough to observe that the successor of \( \sigma\concat[n] \) is always \( \eta(\sigma\concat[n+1]) \) when \( \sigma \in U \) and noting that this gives us a uniform computation of  \( \zeron{\sigma\concat[n]} \) from \( \zeron{\eta(\sigma\concat[n+1])} \) that works for any jump hierarchy defined from a path satisfying \cref{eq:def-H-recursive}. 
 \end{proof}  

 Note that, we can assume that whenever we produce an index for a \( \zeron{\sigma} \) computable set \( X \)  we can assume that index is also an index for \( X \) as a \( \zeron{\tau} \) computable set for any \( \tau \KBgtr \sigma \).   This is trivial given our definition of \( \zeron{\sigma} \) as we merely need to ignore the columns in \( \zeron{\tau} \) that aren't equal to (the code of) some \( \sigma' \KBleq \sigma \).

\subsection{How Much To Copy?}

We make the following inductive definition with the idea that \( l(\sigma) \) indicates how much of \( T_{\sigma^{-}} \) the tree \( T_{\sigma} \) should copy.   Note how the use of the concrete \( \KBless \) ordering on \( U^{+} \) gives us an elegant answer to the question raised in \cref{ssec:build-std-tower} of which tree to copy from.

\begin{equation}\label{eq:l-of-sigma}
l(\sigma) \eqdef \begin{cases}
                \diverge & \text{if } \sigma \nin U^{+}\\
                0 & \text{if } \sigma = \estr\\
                l(\tau) + 4(n+1) & \text{if } \sigma = \tau\concat[n]\\
                \end{cases}
\end{equation}  
This definition guarantees that for all \( \sigma \),  \( l(\sigma) \equiv 0 \pmod{4} \).  We use this function to define a computable copy-length function \( l^{\sigma}_\tau \). 

\begin{equation}\label{eq:l-of-sigma-tau}
l^{\sigma}_\tau  \eqdef \begin{cases}
                                                            \diverge & \text{unless } \sigma, \tau \in U^{+} \text{ and }  \tau \KBless \sigma \\
                                                            l^{\sigma}_{\tau^{-}} & \text{if } \tau^{-} \nsubfun \sigma \\
                                                            l^{\sigma^{-}}_{\tau} & \text{o.w. if } \sigma^{-} \nsubfun \tau \\
                                                             l(\tau) & \text{if } \tau^{-} = \sigma^{-} \text{ or } \tau^{-} = \sigma  \\
                                                            \end{cases}
\end{equation}

\begin{lemma}\label{lem:l-works}
The computable function \(  l^{\sigma}_\tau \) is defined whenever \( \tau \KBless \sigma  \) and is an acceptable copy-length function.  Moreover, we always have \( l^{\sigma}_\tau = l(\nu) \) where \( \tau \supfun \nu  \) and \(  \nu^{-} = \sigma \meet \tau \).  
\end{lemma}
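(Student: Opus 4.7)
The plan is to establish simultaneously that $l^{\sigma}_{\tau}$ is defined, that it equals $l(\nu)$ in the sense of the moreover claim, and that it satisfies both clauses of \cref{def:good-copylen}.

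First I would prove by induction on $\lh{\sigma}+\lh{\tau}$ that whenever $\sigma,\tau \in U^{+}$ and $\tau \KBless \sigma$, setting $\mu \eqdef \sigma \meet \tau$ and $\nu \eqdef \tau\restr{\lh{\mu}+1}$, we have $l^{\sigma}_{\tau}\conv = l(\nu)$; this simultaneously yields definedness and the moreover claim.  The key combinatorial observation is that $\tau^{-} \subfun \sigma$ iff $\tau = \nu$ (regardless of whether $\tau \supsetneq \sigma$ or $\tau \leftof \sigma$), and symmetrically $\sigma^{-} \subfun \tau$ iff $\tau \supsetneq \sigma$ or $\sigma = \sigma\restr{\lh{\mu}+1}$.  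The four branches of~\eqref{eq:l-of-sigma-tau} then match up exactly: if $\tau \neq \nu$ then $\tau^{-} \supfun \nu$, $\tau^{-} \KBless \sigma$, and $\sigma \meet \tau^{-} = \mu$, so induction gives $l^{\sigma}_{\tau^{-}} = l(\nu)$; the symmetric reduction in $\sigma$ handles the next branch; and when both reductions fail one checks that $\tau = \nu$ with either $\tau^{-} = \sigma$ (the case $\tau \supsetneq \sigma$ with $\lh{\tau} = \lh{\sigma}+1$) or $\tau^{-} = \sigma^{-}$ (the case $\tau \leftof \sigma$ with $\lh{\tau} = \lh{\sigma} = \lh{\mu}+1$), so the definition returns $l(\tau) = l(\nu)$.

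For the limit clause of an acceptable copy-length function I would invoke \cref{lem:kb-to-O}\ref{lem:kb-to-O:w-restr}: a limit notation $\alpha = \rho(\tau) \in \kleeneO1$ forces $\tau \in U$, and the effective limit $\Olim{\alpha}{n}$ then ranges under $\rho$ over a cofinal sequence in $\set{\rho(\tau\concat[m])}{m \in \omega}$.  Applying the moreover claim with $\sigma = \tau$ and second argument $\tau\concat[m]$ yields $l^{\tau}_{\tau\concat[m]} = l(\tau\concat[m]) = l(\tau) + 4(m+1) \to \infty$.

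The main obstacle is the monotonicity clause $l^{\sigma'}_{\tau'} \geq l^{\sigma}_{\tau}$ whenever $\tau \KBleq \tau' \KBless \sigma' \KBleq \sigma$.  I would reduce via the intermediate pair $(\sigma,\tau')$ to monotonicity in each coordinate separately, then use the moreover claim to rewrite $l^{\sigma}_{\tau} = l(\mu) + 4(\tau(\lh{\mu})+1)$.  A case analysis on the three-way meet $\omega = \sigma \meet \tau \meet \tau'$ and the three values $\sigma(\lh{\omega}), \tau(\lh{\omega}), \tau'(\lh{\omega})$ eliminates the subcases contradicting the $\KBleq$ hypothesis and leaves: either the two $\meet$-values coincide and $\KBleq$ forces the turning values to be monotone, or one $\meet$-value strictly extends the other, in which case the extra contribution of at least $4(\sigma(\lh{\omega})+1)$ picked up along the longer $\meet$-value dominates the shorter turning contribution $4(x+1)$, since any $x$ strictly left of $\sigma(\lh{\omega})$ satisfies $x \leq \sigma(\lh{\omega}) - 1$.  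The $\sigma$-coordinate case is handled by a symmetric three-way meet analysis.  This bookkeeping is mechanical but delicate, and constitutes the real technical content of the lemma.
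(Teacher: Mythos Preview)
Your proposal is correct and, for definedness, the moreover clause, and the limit clause, proceeds essentially as the paper does (induction on total length, then reading off the explicit formula $l^{\sigma}_{\tau}=l(\nu)$ with $\nu^{-}=\sigma\meet\tau$).

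The monotonicity argument is where your approach diverges from the paper's, and the paper's route is noticeably shorter.  You reduce to coordinate-wise monotonicity via the intermediate pair $(\sigma,\tau')$ and then propose a three-way meet analysis in each coordinate; this works but, as you note yourself, is ``mechanical but delicate.''  The paper instead makes a single observation that collapses all the cases at once: with $\xi=\tau\meet\sigma$ and $\xi'=\tau'\meet\sigma'$, one always has $\xi\subfun\xi'$.  This holds because any string KB-between $\tau$ and $\sigma$ must extend $\tau\meet\sigma$ (a short case check), so in particular $\xi\subfun\tau'$ and $\xi\subfun\sigma'$, whence $\xi\subfun\xi'$.  Once you know $\xi\subfun\xi'$, the comparison $l(\nu)\leq l(\nu')$ reduces to two easy cases: either $\xi'\supsetneq\xi$, in which case $\xi'$ already passes through $\xi\concat[y]$ for some $y\geq\tau(\lh{\xi})$ and so $l(\nu')\geq l(\xi\concat[y])\geq l(\nu)$; or $\xi'=\xi$, in which case the turning values satisfy $\tau'(\lh{\xi})\geq\tau(\lh{\xi})$ directly from $\tau\KBleq\tau'$ with both extending $\xi$.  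Your three-way meet analysis is effectively rediscovering pieces of this, but the single containment $\xi\subfun\xi'$ organizes it more cleanly and avoids the coordinate-wise split altogether.

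One minor point: you use $\omega$ as a variable name for the three-way meet, which clashes with its standard use; pick a different letter if you write this up.
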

Recall that \( \sigma \meet \tau \) denotes the longest common initial segment of \( \sigma \) and \( \tau \).   Formally, what actually satisfies \cref{def:good-copylen} is the function \( l^{\rho(\sigma)}_{\rho(\tau)} \) but we continue to elide this distinction. 
\begin{proof}
First, suppose that \( l^{\sigma}_\tau \) isn't defined where \( \sigma, \tau  \in U^{+} \) are a lexicographically minimal pair of strings  relative to string extension (not \( \KBless \))  \( l^{\sigma}_\tau \) isn't defined.  If either \( \sigma^{-} \nsubfun \tau \) or \( \tau^{-} \nsubfun \sigma \) then we would clearly have \( l^{\sigma}_\tau \) defined.  But if \( \tau^{-} \subfun \sigma \) and \( \sigma^{-} \subfun \tau \)  we have \( \tau^{-} = \sigma^{-} \) or \( \tau^{-} = \sigma \)  which also implies \( l^{\sigma}_\tau\conv \).  To verify the moreover claim note that if we start with \(\nu = \tau \) and continue to shorten \( \nu \) until \( \nu^{-} \subfun \sigma \) we eventually end up with \( \nu^{-} = \sigma \meet \tau \) and that we can shorten \( \sigma \) until either \( \sigma \) or \( \sigma^{-} \) is equal to \( \sigma \meet \tau \) without changing \( l^{\sigma}_{\nu} \).

It is easy to see that when \( \sigma \in U \), i.e., \( \rho(\sigma) \) is a limit, then  \( l^{\sigma\concat[n]}_\sigma \) is defined and a strictly monotonically increasing function in \( n \) with limit \( \infty \).  This establishes part \ref{def:good-copylen:limit} of \cref{def:good-copylen}.  To verify part \ref{def:good-copylen:between} assume that \( \tau \KBleq \tau' \KBless \sigma' \KBleq \sigma \) and let \( \xi = \tau \meet \sigma \) and \( \xi' = \tau' \meet \sigma' \).  Observe that \( \xi \subfun \xi' \) and that if \( \xi\concat[x] \subfun \tau \) then either \( \xi' \)  extends \( \xi\concat[y] \) for some \( y \geq x \) or \( \xi = \xi' \) and \( \tau' \) extends \( \xi\concat[y] \) for some \( y \geq x \).  By the moreover remark either way vindicates the fact that \( l^{\sigma}_\tau \geq l^{\sigma'}_{\tau'} \). 
\end{proof}

 \subsection{A Tower of Trees on a Tree}\label{ssec:nonstandard-construction}

 We now, finally, give a rule for building \( T_{\vartheta} \).  If \( \vartheta = \estr \) then \( T_{\vartheta} = \wstrs \).  Otherwise, we build \( T_{\vartheta} \) by first copying \( T_{\vartheta^{-}} \) (see below) \textemdash or \( \wstrs \) if \( \vartheta^{-} = \estr \)  \textemdash  on strings of length \( l(\vartheta) \) and then use the pulldown lemma to define the rest of \( T_{\vartheta} \) from \( T^{\dagger}_{\vartheta^{+}} \) \textemdash here we are using superscript \( \dagger \) to indicate the version of an object provided by the recursion theorem.  Recall that the overall form of the construction is to define a computable function \( q(i) \) which produces an index for the computable functional \( \Upsilon \) where \( \Upsilon(\zeron{\vartheta}) = T_{\vartheta} \).  So \( T^{\dagger}_{\vartheta^{+}}  \) is just a more expressive name for \( \Phi_i(\zeron{\vartheta^{+}})  \).     

 Unfortunately, this isn't quite sufficient because \( T_{\vartheta^{-}}\restr{l(\vartheta)} \) may not be computable in \( \zeron{\vartheta} \), much less uniformly computable.  We could solve this problem using \cref{lem:tree-uniform-limit} to generate  \( T^{*}_{\vartheta^{-}} \) \textemdash a version of \( T_{\vartheta^{-}} \) such that  \( T^{*}_{\vartheta^{-}}\restr{l(\vartheta)} \Tleq \zeron{\vartheta} \).  However, it will be convenient to ensure that \( T_{\vartheta^{+}}\restr{l(\vartheta)} \) really is equal to  \( T^{*}_{\vartheta^{-}}\restr{l(\vartheta)} \) \textemdash the initial segment that will be copied by \hyperref[lem:tree-minus-one]{the pulldown lemma} in constructing \( T_{\vartheta} \).  To guarantee this nice feature, it is enough to ensure that when we construct any \( T^{*}_{\theta} \) where \( \vartheta^{-} \subfunneq \theta \subfunneq \vartheta^{+} \) we keep \( T^{*}_{\theta}\restr{l(\vartheta)} = T_{\theta}\restr{l(\vartheta)} \).      

 To this end, given \( T_{\vartheta} \) \textemdash except if \( \vartheta = \estr \) in which case \( T^{*}_{\estr} = T_{\estr} = \wstrs \) \textemdash  we will define \( T^{*}_{\vartheta} \) so that \( T_{\vartheta}\restr{l(\vartheta)} = T^{*}_{\vartheta}\restr{l(\vartheta)} \).  Since we will copy \( T_{\vartheta}\restr{l(\vartheta)}  \) down to any \( T_{\theta}, \theta \supfun \vartheta \) we need to ensure that \( T_{\vartheta}\restr{l(\vartheta)} \) is already computable in \( \zeron{\theta} \) for any \( \theta \supfun \vartheta \).  We do this by using \cref{lem:lim-of-mins} in conjunction with \cref{lem:tree-uniform-limit} so that \( T^{*}_{\vartheta}\restr{l(\vartheta\concat[n])} \) is always computable in \( \zeron{\eta(\vartheta\concat[n])} \) \textemdash recall that \( \eta(\vartheta\concat[n]) \) is the \( \KBless \) minimal element extending (or equal to) \( \vartheta\concat[n] \).  This allows us to use the moreover part of \cref{lem:tree-uniform-limit} to leave  \( T_{\vartheta}\restr{l(\vartheta)} = T^{*}_{\vartheta}\restr{l(\vartheta)}  \) since that part of \( T_{\vartheta} \) was already computable in \( \zeron{\eta(\vartheta)} \) and therefore \( \zeron{\eta(\vartheta\concat[0])} \).

 Importantly, the only place that we use our guess \( i \) at an index for \( \Upsilon \) is to provide the indexes for trees of the form \( T_{\vartheta^{+}} \) for use in \hyperref[lem:tree-minus-one]{the pulldown lemma}.  We \textit{don't} define \( T_{\vartheta} \) by directly copying from \( T^{\dagger}_{\vartheta^{-}} \) but by copying from the version of \( T_{\vartheta^{-}} \) the construction produces using \hyperref[lem:tree-minus-one]{the pulldown lemma}.  This ensures that all uses of our guess \( i \) in the construction are wrapped inside an application of \hyperref[lem:tree-minus-one]{the pulldown lemma} and therefore can take advantage of the aspects of that lemma which ensure we fail gracefully and thereby avoid trivial solutions\footnote{If we'd simply defined \( T_{\vartheta} \) to copy directly from \( T^{\dagger}_{\vartheta^{-}} \) then a valid fixed point of \( q \) might result in \( T_{\vartheta} \) always being empty. }.

 Formally speaking, we can define a computable function \( f(\vartheta, i) \) which yields an index for \( T_{\vartheta} \) as a \( \zeron{\vartheta} \) computable set\footnote{That is, an index for the characteristic function of \( T_{{\vartheta}} \) with computability only guaranteed if that function is total.}.    We define \( f(\vartheta, i) \) to be an index for \( \wstrs \) if \( \vartheta = \estr \) and if \( \vartheta \neq \estr \)  the result of applying  \hyperref[lem:tree-minus-one]{the pulldown lemma}.  In the later case, we need to specify \( l \),  \( e_0 \) \textemdash an index for \( T'\restr{l} \) as a \( \zeron{\vartheta} \) computable set \textemdash  and \( e \) \textemdash an index for \( T' \) as a \( \zeron{\vartheta^{+}} \) computable set.

 We take \( e_0 \) to be an index for \( T^{*}_{\vartheta^{-}}\restr{l(\vartheta)} \) where \( T^{*}_{\vartheta^{-}} \)  is the result of applying \cref{lem:tree-uniform-limit} (in light of \cref{lem:lim-of-mins}) to \( T_{\vartheta^{-}} \) as described above \textemdash or really to the index \( f(\vartheta^{-}, i) \) for that tree \textemdash  so that \( T^{*}_{\vartheta^{-}}\restr{l(\vartheta)} \) is uniformly computable in \( \zeron{\eta(\vartheta)} \).  Note that there is a single definition of \( T^{*}_{\vartheta^{-}} \) used in defining \( T_{\vartheta} \) for all immediate extensions \( \vartheta \) or \( \vartheta^{-} \).  We let \( e \) be the index for \( T^{\dagger}_{\vartheta^{+}} \), i.e., the guess at \( T_{\vartheta^{+}} \) inferred from the index \( i \), and let \( l = l(\vartheta) \).     A trivial induction on the length of \( \vartheta \) will verify that \( f(\vartheta, i) \) is defined for every \( \vartheta \in U^{+} \). 

 We then take \( f \) and use it to define a computable function \( q(i) \) which gives an index for the computable functional \( \Upsilon \) defined by \( \Upsilon(\zeron{\vartheta}) = \recfnl{f(\vartheta, i)}{\zeron{\vartheta}}{} \), i.e., \( \Upsilon \)  uses the index \( f(\vartheta, i) \) to compute \( T_{\vartheta} \) from \( \zeron{\vartheta} \).  By the recursion theorem we can then find a fixed point \( i \)  for \( q \) which defines a computable functional \( \Upsilon \) giving us the final definition of \( T_{\vartheta} = \Upsilon(\zeron{\vartheta})  \).

 While this suffices to define our trees, we also need to define our expansionary functions \( \Gamma^{\vartheta}_\theta \) which we do now.  We also use the recursion theorem to define \( \Gamma^{\vartheta}_\theta \) but we can assume that we've already succeeded in defining the trees \( T_{\vartheta} \) for all \( \vartheta \in U^{+} \) and thus that \( \Gamma^{\vartheta^{+}}_{\vartheta} \) is defined for us for all \( \vartheta \in U^{+} \) by the application of \hyperref[lem:tree-minus-one]{the pulldown lemma}.  We can then define \( \Gamma \) in all other cases as follows.  Here we use \( \murec{n}{\Psi(n)} \) to indicate the least \( n \) satisfying \( \Psi(n) \) and again use \( \dagger \) to indicate the version of \( \Gamma \) given by our guess at the index \( i \).

 \begin{equation}\label{eq:gamma-rec-def}
 \Gamma^{\vartheta}_{\theta}(\sigma) \eqdef \begin{cases}
                                        \diverge & \text{if } \vartheta \nin U^{+} \lor \theta \nin U^{+} \lor \vartheta \nKBless \theta \lor \sigma \nin T_{\vartheta} \\[.5ex]
                                        \sigma & \text{if } \vartheta = \theta \\[.5ex]
                                        \bigl(\Gamma^{\dagger, \epsilon}_{\theta}\!\!\!\circ\Gamma^{\epsilon^{+}}_{\epsilon}\bigr)(\sigma) & \text{if } \vartheta = \epsilon^{+}\\[.5ex]
                                        \Gamma^{\dagger, \vartheta\concat[m]}_{\theta}(\sigma) & \text{if } \vartheta \in U \land m = \murec{n}{\theta \KBless \vartheta\concat[n] \land \lh{\sigma} < n}  \\
                                        \end{cases}
 \end{equation}

 We then use the recursion theorem to transform this definition into a construction of the computable functional \( \Gamma \) which computes \( \Gamma^{\vartheta}_{\theta}(\sigma) \) from \( \zeron{\vartheta}, \vartheta, \theta, \sigma \) just as we did above for \( \Upsilon \).   

 \subsection{Verification}

 We will frequently argue by induction over \( \KBless \)  in this subsection so instead of invoking \cref{lem:hyp-set-has-min} every time we need to justify an inductive argument we will mention it only when we are dealing with a non-\( \piin{1} \) property so can't extend the induction beyond the well-ordered initial segment of \( U^{+} \). We start by proving that our trees are defined and respect our copy-length function.  

\begin{lemma}\label{lem:tree-tower-defined}
For all \( \sigma \in U^{+} \), \( T_{\sigma} \) is a nice tree (uniformly) computable from \( \zeron{\sigma} \) and if \( \tau \KBless \sigma \) then \( T_{\tau}\restr{l^{\sigma}_\tau} = T_{\sigma}\restr{l^{\sigma}_\tau} \) unless \( \tau \supfunneq \sigma \) in which case \( T_{\tau}\restr{l^{\sigma}_\tau} = T^{*}_{\sigma}\restr{l^{\sigma}_\tau} \supset T_{\sigma}\restr{l^{\sigma}_\tau} \).    
\end{lemma}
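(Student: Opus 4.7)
The plan is to handle niceness and uniform computability first, then address the copy-length agreement via an auxiliary claim.

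For niceness and computability of $T_\sigma$, I would induct on $\lh{\sigma}$. At $\sigma = \estr$, $T_\estr = \wstrs$ works trivially. For the inductive step, with $T_{\sigma^-}$ already nice and $\zeron{\sigma^-}$-computable, apply \cref{lem:tree-uniform-limit} (using the decomposition from \cref{lem:lim-of-mins}) to obtain $T^*_{\sigma^-}$ such that $T^*_{\sigma^-}\restr{l(\sigma)}$ is uniformly $\zeron{\eta(\sigma)}$-computable, hence $\zeron{\sigma}$-computable, while the furthermore clause forces $T^*_{\sigma^-}\restr{l(\sigma^-)} = T_{\sigma^-}\restr{l(\sigma^-)}$. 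Feeding this initial segment (with $l = l(\sigma)$) along with $T^{\dagger}_{\sigma^+}$ into \hyperref[lem:tree-minus-one]{the pulldown lemma} with $X = \zeron{\sigma}$, parts \ref{lem:tree-minus-one:comp} and \ref{lem:tree-minus-one:nice} deliver a nice, uniformly $\zeron{\sigma}$-computable $T_\sigma$. The moreover of the pulldown lemma is what makes this robust under the recursion theorem: even when the guess $T^{\dagger}_{\sigma^+}$ is ill-behaved, the resulting $T_\sigma$ is still nice and $\zeron{\sigma}$-computable.

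For the copy-length agreement, my main tool is the auxiliary claim, proved by induction on $\lh{\tau} - \lh{\vartheta}$: for every $\vartheta \in U^+$ and every $\tau \supfun \vartheta$ in $U^+$, $T_\tau\restr{l(\vartheta)} = T_\vartheta\restr{l(\vartheta)}$. The base $\tau = \vartheta$ is immediate; for $\tau = \tau^-\concat[m] \supfunneq \vartheta$, the construction of $T_\tau$ forces $T_\tau\restr{l(\tau)} = T^*_{\tau^-}\restr{l(\tau)}$, and restricting to length $l(\vartheta) \leq l(\tau^-) \leq l(\tau)$ and using $T^*_{\tau^-}\restr{l(\tau^-)} = T_{\tau^-}\restr{l(\tau^-)}$ from \cref{lem:tree-uniform-limit} together with the sub-IH yields $T_\tau\restr{l(\vartheta)} = T_{\tau^-}\restr{l(\vartheta)} = T_\vartheta\restr{l(\vartheta)}$.

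With this in hand, the main claim follows by case analysis. Using the moreover of \cref{lem:l-works}, write $l^\sigma_\tau = l(\nu)$ with $\nu \subfun \tau$ and $\nu^- = \sigma \meet \tau$. Case (a): if $\tau \supfunneq \sigma$, then $\nu = \sigma\concat[n]$ for some $n$; the construction at $\nu$ gives $T_\nu\restr{l(\nu)} = T^*_\sigma\restr{l(\nu)}$, and applying the auxiliary claim with $\vartheta = \nu$ yields $T_\tau\restr{l(\nu)} = T_\nu\restr{l(\nu)} = T^*_\sigma\restr{l^\sigma_\tau}$, with $T^*_\sigma\restr{l^\sigma_\tau} \supset T_\sigma\restr{l^\sigma_\tau}$ from \cref{lem:tree-uniform-limit}. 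Case (b): otherwise $\tau$ and $\sigma$ are incompatible with $\tau \leftof \sigma$; letting $\mu = \sigma \meet \tau$, the characters of $\tau$ and $\sigma$ just past $\mu$ are $m < k$, so $l^\mu_\tau = l(\mu) + 4(m+1) < l(\mu) + 4(k+1) = l^\mu_\sigma$. Applying case (a) to the pairs $(\mu, \tau)$ and $(\mu, \sigma)$ and restricting the latter to length $l^\mu_\tau = l^\sigma_\tau$ yields $T_\tau\restr{l^\sigma_\tau} = T^*_\mu\restr{l^\sigma_\tau} = T_\sigma\restr{l^\sigma_\tau}$.

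The main obstacle is keeping the interlocking pieces coherent: the $\KBless$-directed pulldown that builds $T_\sigma$ from $T^{\dagger}_{\sigma^+}$, the $T \mapsto T^*$ uniformization that provides appropriately stratified initial segments, and the string-extension inheritance captured by the auxiliary claim. The combinatorial core is packaged by the moreover of \cref{lem:l-works}, which identifies $l^\sigma_\tau$ with $l(\nu)$ for a specific $\nu$ and thereby lets every instance of the main claim be reduced either to a direct pulldown-construction statement or to a two-step factorization through the meet $\mu$.
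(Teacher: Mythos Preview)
Your proposal is correct and follows essentially the same route as the paper: niceness and computability come straight from the graceful-failure clause of \hyperref[lem:tree-minus-one]{the pulldown lemma}, the extension case \( \tau \supfunneq \sigma \) is handled by an induction along the chain from \( \sigma \) down to \( \tau \) using \( T^{*}_{\tau^{-}}\restr{l(\tau^{-})} = T_{\tau^{-}}\restr{l(\tau^{-})} \), and the incompatible case is reduced to two instances of the extension case through the meet \( \mu = \sigma \meet \tau \). Your packaging of the inductive core as a separate auxiliary claim at level \( l(\vartheta) \) and your explicit invocation of the moreover of \cref{lem:l-works} to name \( \nu \) are a bit tidier than the paper's presentation, but the argument is the same.
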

Here \( T^{*}_{\sigma} \) denotes the tree produced from \( T_{\sigma} \) via \cref{lem:tree-uniform-limit,lem:lim-of-mins} so that \( T^{*}_{\sigma}\restr{l(\sigma\concat[n])} \) is uniformly computable in \( \zeron{\eta(\sigma\concat[n])} \) and \( T_{\sigma}\restr{l(\sigma)} = T^{*}_{\sigma}\restr{l(\sigma)} \) .  
\begin{proof}
    As discussed above, the fact that \( \Upsilon(\zeron{\sigma}) \) is always either \( \wstrs \) or the result of applying \hyperref[lem:tree-minus-one]{the pulldown lemma} guarantees that \( T_{\sigma} \) is always defined, computable from \( \zeron{\sigma} \)  and a nice tree whenever \( \sigma \in U^{+} \). This leaves only the claim about restrictions to prove.  

    We first argue that the lemma holds when \( \tau^{-} = \sigma \) or \( \tau^{-} = \sigma^{-} \) since in either case we have \( l^{\sigma}_\tau = l(\tau) \) and in both cases  \( T_{\tau} \)  copies \( T^{*}_{\tau^{-}} \) on strings of length \( l(\tau^{-}) \).  If \( \sigma = \tau^{-} \) then we immediately have \( T_{\tau}\restr{l(\tau)} = T^{*}_{\sigma}\restr{l(\tau)} \) and if  \( \tau^{-} = \sigma^{-} \) then \( l(\sigma) > l(\tau) \) so both \( T_{\sigma} \) and \( T_{\tau} \) agree with \( T^{*}_{\tau^{-}} \) on strings of length up to \( l(\tau) \).

    Now suppose that \( \tau \supfun \sigma \) and that we've inductively established that the claim holds for \( \tau^{-} \supfunneq \sigma  \).  By \cref{lem:l-works} and \cref{eq:l-of-sigma,eq:l-of-sigma-tau} we will have \( l(\tau^{-}), l(\tau), l^{\sigma}_{\tau^{-}} \geq l^{\sigma}_{\tau} \) and by the inductive hypothesis we will have \( T_{\tau^{-}}\restr{l^{\sigma}_{\tau^{-}}} = T^{*}_{\sigma}\restr{l^{\sigma}_{\tau^{-}}} \).  Since \( T^{*}_{\tau^{-}}\restr{l(\tau^{-})} = T_{\tau^{-}}\restr{l(\tau^{-})} \) and \( T^{*}_{\tau^{-}}\restr{l(\tau)} = T_{\tau}\restr{l(\tau)} \) we can infer that \( T_{\tau}\restr{l^{\sigma}_{\tau}} = T^{*}_{\sigma} \).

    This leaves the case where \( \tau \) and \( \sigma \) are incompatible.  Let \( \tau', \sigma' \) be such that \( \tau'^{-} = \tau \meet \sigma = \sigma'^{-} \).  This case follows from the fact that we already proved the result for \( \tau', \sigma' \) combined with the argument in the prior paragraph that lets us translate that agreement from \( T_{\tau'} \) to \( T_{\tau} \) and \( T_{\sigma'}  \) to \( T_{\sigma} \).                        
\end{proof}

This result ensures that our uses of \hyperref[lem:tree-minus-one]{the pulldown lemma} are well-behaved, i.e., \( T_{\vartheta^{+}} \) really does extend the initial segment we are copying up to \( T_{\vartheta} \) from \( T_{\vartheta^{=}} \).  As \( \vartheta \meet \vartheta^{+} = \vartheta^{-} \) the result implies 
\[ T_{\vartheta^{+}}\restr{l(\vartheta)} = T^{*}_{\vartheta^{=}}\restr{l(\vartheta)} =  T_{\vartheta}\restr{l(\vartheta)}  \] 
We now argue that \( \Gamma \) is defined and induces homeomorphisms on the well-founded initial segment.

\begin{lemma}\label{lem:tree-gamma-defined}
If \( \theta \KBleq  \vartheta  \) then  \( \Gamma^{\vartheta}_{\theta} \Tleq \zeron{\vartheta} \) is an expansionary function mapping \( T_{\vartheta} \) into \( T_{\theta} \) that is the identity on strings of length up to \( l^{\vartheta}_\theta \).  Moreover, if \( \xi \in [\theta, \vartheta] \) then  \( \Gamma^{\vartheta}_\theta = \Gamma^{\xi}_{\theta} \circ \Gamma^{\vartheta}_\xi  \).       
\end{lemma}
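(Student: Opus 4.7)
The plan is to prove the main claim and the moreover (composition) identity simultaneously by transfinite induction on $\vartheta$ in the $\KBless$-order on $U^+$, invoking \cref{lem:hyp-set-has-min} to license the induction (the statement ``$\vartheta$ is a counterexample'' is essentially $\pi^1_1$-in-$U^+$). For each $\vartheta$ I will verify the listed properties of $\Gamma^\vartheta_\theta$ for every $\theta \KBleq \vartheta$ together with the composition identity for every $\xi$ with $\theta \KBleq \xi \KBleq \vartheta$. The three non-divergent branches of \cref{eq:gamma-rec-def} supply three cases: $\vartheta = \theta$, $\vartheta = \epsilon^+$ a successor, and $\vartheta \in U$ a limit.

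The base case $\vartheta = \theta$ is immediate since \cref{eq:gamma-rec-def} sets $\Gamma^\vartheta_\vartheta = \id\restr{T_\vartheta}$. In the successor case the recursion gives $\Gamma^\vartheta_\theta = \Gamma^\epsilon_\theta \circ \Gamma^{\epsilon^+}_\epsilon$. The inner factor is precisely the function produced by the application of \cref{lem:tree-minus-one} (with $l = l(\epsilon)$ and $T' = T_{\epsilon^+}$) used in \cref{ssec:nonstandard-construction} to construct $T_\epsilon$, so clauses \ref{lem:tree-minus-one:copy}, \ref{lem:tree-minus-one:expansionary}, \ref{lem:tree-minus-one:dom-zeta}, and \ref{lem:tree-minus-one:homo} hand us directly the expansionariness, $\zeron{\epsilon^+}$-computability, the mapping $T_{\epsilon^+} \to T_\epsilon$, and the identity on $T_{\epsilon^+}\restr{l(\epsilon)}$. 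The IH supplies the corresponding facts for $\Gamma^\epsilon_\theta$, composition of expansionary functions is expansionary, and $\zeron\epsilon \Tleq \zeron\vartheta$ preserves computability. The identity clause for the composite reduces to the inequalities $l^\vartheta_\theta \leq l(\epsilon)$ and $l^\vartheta_\theta \leq l^\epsilon_\theta$, both immediate from the monotonicity statements of \cref{lem:l-works}. The composition identity for $\xi = \epsilon$ is the definition itself, and for $\xi \KBless \epsilon$ it follows from associativity and the IH composition identity at $\epsilon$.

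The limit case $\vartheta \in U$ is the delicate heart of the argument. Here $\Gamma^\vartheta_\theta(\sigma)$ is defined to equal $\Gamma^{\vartheta\concat[m]}_\theta(\sigma)$ for the least $m$ with $\theta \KBless \vartheta\concat[m]$ and $\lh{\sigma} < m$, which exists because the sequence $\vartheta\concat[n]$ is $\KBless$-cofinal below $\vartheta$. First I will show the output is independent of the valid choice of $m$: for $m < m'$ both valid, the IH composition identity at $\vartheta\concat[m']$ gives $\Gamma^{\vartheta\concat[m']}_\theta = \Gamma^{\vartheta\concat[m]}_\theta \circ \Gamma^{\vartheta\concat[m']}_{\vartheta\concat[m]}$, and the IH identity clause kills the inner factor on $\sigma$, since a direct computation from \cref{eq:l-of-sigma,eq:l-of-sigma-tau} yields $l^{\vartheta\concat[m']}_{\vartheta\concat[m]} = l(\vartheta\concat[m]) \geq 4(m+1) > \lh{\sigma}$. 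Computability from $\zeron\vartheta$ follows since each $\zeron{\vartheta\concat[m]} \Tleq \zeron\vartheta$ uniformly in $m$ by the recursive jump-hierarchy definition of \cref{ssec:defining-zero-theta} together with \cref{lem:lim-of-mins}. Expansionariness, the mapping $T_\vartheta \to T_\theta$, identity on $T_\vartheta\restr{l^\vartheta_\theta}$, and the composition identity all reduce to the corresponding facts for $\Gamma^{\vartheta\concat[m]}_\theta$ via IH: the monotonicity of $l^{\cdot}_{\cdot}$ in \cref{lem:l-works} gives $l^\vartheta_\theta \leq l^{\vartheta\concat[m]}_\theta$, and \cref{lem:tree-tower-defined} places $T_\vartheta$ and $T_{\vartheta\concat[m]}$ in agreement on initial segments of length $l^\vartheta_{\vartheta\concat[m]} \geq l^\vartheta_\theta$, so every appeal to IH lands on the intended initial segment. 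For the composition identity with arbitrary $\xi \in [\theta, \vartheta]$, choose $m$ large enough also to satisfy $\xi \KBless \vartheta\concat[m]$ and apply IH at $\vartheta\concat[m]$.

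The main obstacle is the bookkeeping in the limit case: one must juggle the choice of $m$ to satisfy several simultaneous side conditions (existence of the value, identity, composition, $\lh{\sigma} < m$, and $\theta$ or $\xi$ below $\vartheta\concat[m]$) while ensuring that the copy-length functions and tree restrictions line up correctly. The technical inputs that make this possible are \cref{lem:l-works} (monotonicity of $l^\sigma_\tau$ and its characterization as $l(\nu)$), \cref{lem:tree-tower-defined} (the tree-agreement statements propagating $T_\vartheta$-membership to the $T_{\vartheta\concat[m]}$), the arithmetic fact that $l(\vartheta\concat[n])$ grows strictly with $n$, and the uniform reducibility $\zeron{\vartheta\concat[m]} \Tleq \zeron\vartheta$ furnished by \cref{ssec:defining-zero-theta}.
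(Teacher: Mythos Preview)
Your proposal is correct and follows essentially the same inductive strategy as the paper's proof. The one organizational difference is that you isolate the ``independence of the valid choice of $m$'' in the limit case as a standalone preliminary step, whereas the paper handles the discrepancy between the $m$ chosen for $\sigma$ and the $m'$ chosen for $\sigma\concat[x]$ (or for $\Gamma^{\vartheta}_{\xi}$) inline each time it arises; your packaging is arguably tidier but the underlying computation is identical.
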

We warn the reader that there is little, if any, insight offered by this proof as it consists almost entirely of straightforward expansion of \eqref{eq:gamma-rec-def} in the expected inductive arguments.
\begin{proof}
    All the equality cases, e.g., \( \theta = \vartheta \) or \( \xi = \vartheta \), are trivial so we ignore them for the rest of the proof.  To see that  \( \Gamma^{\vartheta}_{\theta} \) is computable in \( \zeron{\vartheta} \)  \textemdash rather than merely r.e. in \( \zeron{\vartheta} \) as our definition via the recursion theorem guarantees \textemdash it is enough to observe that our discussion immediately above ensures that our applications of \hyperref[lem:tree-minus-one]{the pulldown lemma} are passed a well-behaved \( T' \) and therefore \( \Gamma^{\epsilon^{+}}_\epsilon \Tleq \zeron{\epsilon} \) with the other cases following straightforwardly from \eqref{eq:gamma-rec-def}.  

    We now verify that  \( \Gamma^{\vartheta}_{\theta} \) has domain \( T_{\vartheta} \) and range contained in \( T_{\theta} \).  In \eqref{eq:gamma-rec-def} we explicitly ensured that \( \dom \Gamma^{\vartheta}_{\theta} \subset T_{\vartheta} \) so we just need to show that if \( \sigma \in T_{\vartheta} \) then \( \Gamma^{\vartheta}_{\theta}(\sigma)\conv \in T_{\theta} \).       Inductively suppose that this holds for all  \( \vartheta' \KBless \vartheta \).  The claim is immediate if \( \vartheta = \theta^{+} \) and follows straightforwardly by \eqref{eq:gamma-rec-def} and the inductive hypothesis if \( \vartheta \) is a successor, i.e., \( \vartheta \in  U^{+}\setminus U \).  Thus, we may assume that \( \vartheta \in U \).

    As \( \vartheta \) is the limit of its immediate extensions, there is some least \( m \) satisfying  \( \theta \KBless \vartheta\concat[m] \land \lh{\sigma} < m \) and by \eqref{eq:gamma-rec-def} we have \( \Gamma^{\vartheta}_{\theta}(\sigma) = \Gamma^{\vartheta\concat[m]}_{\theta}(\sigma) \).  As \( T_{\vartheta\concat[m]}\restr{l(\vartheta\concat[m])} = T^{*}_{\vartheta}\restr{l(\vartheta\concat[m])} \) and \( l(\vartheta\concat[m]) \geq m > \lh{\sigma} \) we have \( \sigma \in T_{\vartheta\concat[m]} \) and by the inductive hypothesis \( \Gamma^{\vartheta\concat[m]}_{\theta}(\sigma)\conv \in T_{\theta} \).  Thus, \( \Gamma^{\vartheta}_{\theta}(\sigma)\conv \in T_{\theta} \) as desired.

    We verify all the remaining aspects of the lemma via a similar \textemdash but simultaneous \textemdash induction on \( \vartheta \).  We start with the claim that \( \Gamma^{\vartheta}_{\theta} \) is always expansionary \textemdash though we actually verify the stronger claim that if \( \sigma, \sigma\concat[x] \in T_{\vartheta} \) then  \( \Gamma^{\vartheta}_{\theta}(\sigma\concat[x]) \supfun \Gamma^{\vartheta}_{\theta}(\sigma)\concat[x] \). As this property is clearly preserved under composition and  \hyperref[lem:tree-minus-one]{the pulldown lemma} guarantees \( \Gamma^{\epsilon^{+}}_{\epsilon} \) always has this property it is enough to handle the case where \( \vartheta \in U \).

    By \eqref{eq:gamma-rec-def} there are \( m, m' \) such that \( \Gamma^{\vartheta}_{\theta}(\sigma) =  \Gamma^{\vartheta\concat[m]}_{\theta}(\sigma) \) and \( \Gamma^{\vartheta}_{\theta}(\sigma\concat[x]) =  \Gamma^{\vartheta\concat[m']}_{\theta}(\sigma\concat[x]) \).  We always have either \( m' = m \) or \( m' = m +1 \) depending on whether \( \theta \KBless \vartheta\concat[\lh{\sigma}] \) and in the former case the result follows immediately from the inductive hypothesis.  In the later case, we can apply our inductive hypothesises with respect to composition to see that
     \begin{equation}\label{eq:tree-gamma-m-to-mprime}
        \Gamma^{\vartheta}_{\theta}(\sigma\concat[x]) = \Gamma^{\vartheta\concat[m']}_{\theta}(\sigma\concat[x]) = \left(\Gamma^{\vartheta\concat[m]}_{\theta} \circ \Gamma^{\vartheta\concat[m']}_{\vartheta\concat[m]}\right)(\sigma\concat[x]) 
    \end{equation}    
    By the inductive hypothesis for this claim, the fact that \( l^{\vartheta\concat[m']}_{\vartheta\concat[m]} \geq m \) and the inductive hypothesis about restrictions we know that the right hand side of \eqref{eq:tree-gamma-m-to-mprime} extends
    \begin{equation}\label{eq:tree-gamma-m-to-mprime-second}
    \Gamma^{\vartheta\concat[m]}_{\theta}\left(\Gamma^{\vartheta\concat[m']}_{\vartheta\concat[m]}(\sigma)\right)\concat[x] =  \Gamma^{\vartheta\concat[m]}_{\theta}(\sigma)\concat[x] = \Gamma^{\vartheta}_{\theta}(\sigma)\concat[x]
    \end{equation}
    This gives us the desired result that \(  \Gamma^{\vartheta}_{\theta}(\sigma\concat[x]) \supfun \Gamma^{\vartheta}_{\theta}(\sigma)\concat[x]  \).   We now verify the fact that \( \Gamma^{\vartheta}_{\theta} \) is the identity on strings of length up to \( l^{\vartheta}_\theta \).  The successor case follows from \hyperref[lem:tree-minus-one]{the pulldown lemma} and the discussion preceding this lemma.  For the limit case, it is enough to apply \eqref{eq:gamma-rec-def} and  observe that by \cref{def:good-copylen} and \cref{lem:l-works} for all \( m \) large enough to ensure \( \theta \KBless \vartheta\concat[m] \)  we have \( l^{\vartheta\concat[m]}_{\theta} \geq l^{\vartheta}_{\theta} \).

    For the moreover claim, the successor case is immediate from \eqref{eq:gamma-rec-def}.   Thus, we can assume \( \vartheta \in U \) and by \eqref{eq:gamma-rec-def} and the inductive hypothesis if \( m \) is the least element of \( \omega \) such that \( \lh{\sigma} < m \) and \( \theta \KBless \vartheta\concat[m] \) then \( \Gamma^{\vartheta}_{\theta}(\sigma) = \Gamma^{\vartheta\concat[m]}_{\theta}(\sigma)  \).  If we have \( \xi \KBless \vartheta\concat[m] \) then \( m \) must also be the least element of \( \omega \) such that  \( \lh{\sigma} < m \) and \( \xi \KBless \vartheta\concat[m] \) so the inductive hypothesis implies that 
    \[
        \Gamma^{\vartheta}_{\theta}(\sigma) = \Gamma^{\vartheta\concat[m]}_{\theta}(\sigma) =  \left(\Gamma^{\xi}_{\theta}\circ \Gamma^{\vartheta\concat[m]}_{\xi}\right)(\sigma) = \left(\Gamma^{\xi}_{\theta}\circ \Gamma^{\vartheta}_{\xi}\right)(\sigma)
    \]    
    To handle the case, where \eqref{eq:gamma-rec-def} tells us that  \( \Gamma^{\vartheta}_{\xi}(\sigma)  =  \Gamma^{\vartheta\concat[m']}_{\xi}(\sigma) \) for some \( m' > m \) we use the same technique of arguing \( \Gamma^{\vartheta\concat[m']}_{\vartheta\concat[m]}(\sigma) = \sigma \)  as in \eqref{eq:tree-gamma-m-to-mprime-second} to bridge the difference between \(  \Gamma^{\vartheta\concat[m']}_{\xi}(\sigma) \) and \(  \Gamma^{\vartheta\concat[m]}_{\xi}(\sigma) \).   
\end{proof}

\begin{lemma}\label{lem:homeo-almost}
If \( \theta \KBleq  \vartheta  \) then  \( \Gamma^{\vartheta}_{\theta}  \) is a homeomorphism of \( [T_{\vartheta}] \) with a subset of \( [T_\theta] \).  If \( \vartheta \in U^{+}_{\leftof f_U} \) then that subset is all of \( [T_\theta] \).  
\end{lemma}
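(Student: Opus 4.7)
The first assertion follows almost immediately from the preceding machinery: \cref{lem:tree-gamma-defined} has already established that $\Gamma^\vartheta_\theta$ is expansionary with domain $T_\vartheta$ and range contained in $T_\theta$, so applying \cref{lem:expan-homeo} yields a homeomorphism between $[T_\vartheta]$ and $[T']$, where $T'$ is the downward closure of the range. Since $T' \subset T_\theta$ we have $[T'] \subset [T_\theta]$, which gives the first claim.

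For the surjectivity claim when $\vartheta \in U^{+}_{\leftof f_U}$, the plan is to proceed by transfinite induction on $\vartheta$ under $\KBless$, which is a genuine well-ordering on $U^{+}_{\leftof f_U}$ by \cref{lem:kb-wellfounded}. The statement at stage $\vartheta$ asserts surjectivity of $\Gamma^\vartheta_\theta$ for every $\theta \KBleq \vartheta$. The base case $\vartheta = \theta$ is trivial, and the successor case $\vartheta = \epsilon^{+}$ is immediate from the factorization $\Gamma^\vartheta_\theta = \Gamma^\epsilon_\theta \circ \Gamma^{\epsilon^{+}}_\epsilon$ in \eqref{eq:gamma-rec-def}: the pulldown lemma (part \ref{lem:tree-minus-one:homo}) makes $\Gamma^{\epsilon^{+}}_\epsilon$ a homeomorphism of $[T_{\epsilon^{+}}]$ with $[T_\epsilon]$, while $\theta \KBleq \epsilon^{+}$ forces $\theta \KBleq \epsilon$ (since nothing in $U^{+}$ lies strictly between $\epsilon$ and $\epsilon^{+}$), allowing the inductive hypothesis to finish the composition.

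The hard part will be the limit case $\vartheta \in U$, where I must glue preimages from the immediate extensions $\vartheta\concat[m]$ into a single path. Given $h \in [T_\theta]$, a short case analysis (on whether $\theta \supfunneq \vartheta$ or $\theta \leftof \vartheta$) shows $\theta \KBless \vartheta\concat[m]$ for all sufficiently large $m$, and each such $\vartheta\concat[m]$ still lies in $U^{+}_{\leftof f_U}$ because it is $\KBless$-below $\vartheta$. The inductive hypothesis then supplies unique preimages $g_m \in [T_{\vartheta\concat[m]}]$ with $\Gamma^{\vartheta\concat[m]}_\theta(g_m) = h$. The composition identity of \cref{lem:tree-gamma-defined}, applied to $\theta \KBleq \vartheta\concat[m] \KBless \vartheta\concat[m']$ for $m < m'$, forces $\Gamma^{\vartheta\concat[m']}_{\vartheta\concat[m]}(g_{m'}) = g_m$, and \cref{lem:l-works} gives $l^{\vartheta\concat[m']}_{\vartheta\concat[m]} = l(\vartheta\concat[m]) \geq m$, which makes this transition map the identity on strings of length less than $m$. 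Hence $g_m\restr n = g_{m'}\restr n$ whenever $n < m < m'$, and the sequence $(g_m(n))_m$ stabilizes to define a single function $g$.

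Finally I would verify $g \in [T_\vartheta]$ and $\Gamma^\vartheta_\theta(g) = h$. For each $n$, \cref{lem:tree-tower-defined} gives $g\restr n = g_m\restr n \in T_{\vartheta\concat[m]}\restr{l(\vartheta\concat[m])} = T^{*}_\vartheta\restr{l(\vartheta\concat[m])}$ for any large enough $m$, so $g\restr n \in T^{*}_\vartheta$ for every $n$ and thus $g \in [T^{*}_\vartheta] = [T_\vartheta]$. The recursive definition \eqref{eq:gamma-rec-def} then forces $\Gamma^\vartheta_\theta(g\restr n) = \Gamma^{\vartheta\concat[m]}_\theta(g\restr n) = \Gamma^{\vartheta\concat[m]}_\theta(g_m\restr n) \subfun h$ for such $m$, and letting $n \to \infty$ delivers $\Gamma^\vartheta_\theta(g) = h$, completing the induction.
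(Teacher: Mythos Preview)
Your proposal is correct and follows essentially the same route as the paper: both arguments invoke \cref{lem:expan-homeo} for the first claim and then run a transfinite induction on $\vartheta$ over the genuinely well-ordered set $U^{+}_{\leftof f_U}$, dispatching the successor case via the pulldown lemma and handling the limit case by assembling a preimage from the approximating levels $\vartheta\concat[m]$. The only cosmetic difference is that the paper works directly with strings $\sigma_n \in T_{\vartheta\concat[n]}$ of length $n-1$ and takes $g = \bigcup_n \sigma_n$, whereas you pull back full paths $g_m$ and argue their restrictions stabilize; your version is arguably a bit more careful in explicitly routing the verification that $g \in [T_\vartheta]$ through $[T^{*}_\vartheta] = [T_\vartheta]$.
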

\begin{proof}
    The first claim follows immediately from \cref{lem:expan-homeo} and the fact that  \( \Gamma^{\vartheta}_{\theta}  \) is expansionary.  Thus, it is enough to prove that  \( \Gamma^{\vartheta}_{\theta} \)  maps \( [T_{\vartheta}] \)  onto  \( [T_{\theta}] \) whenever \( \vartheta, \theta \in U^{+}_{\leftof f_U} \).  Given \( \theta \in U^{+}_{\leftof f_U}  \) suppose  by way of contradiction, \( \vartheta \KBgeq \theta \) is the least element of \( U^{+}_{\leftof f_U}  \) where  \( \Gamma^{\vartheta}_{\theta}  \) fails to be a surjection of \( [T_{\vartheta}] \)  with  \( [T_{\theta}] \).  Note that this property isn't \( \piin{1} \) so we can't use \cref{lem:hyp-set-has-min} to extend this argument beyond the genuinely well-ordered initial segment  of \( U^{+} \).    

    Trivially, we must have \( \vartheta \neq \theta \) and it is easy to see from \eqref{eq:gamma-rec-def} that \( \vartheta \) can't be a successor since \hyperref[lem:tree-minus-one]{the pulldown lemma} always builds a homeomorphism of \( T_{\epsilon} \) and \( T_{\epsilon^{+}} \).  Thus, we may suppose that \( \vartheta \in U \). Given \( f \in [T_{\theta}] \) define \( \sigma_n \) for all \( n  \) large enough that \( \theta \KBless \vartheta\concat[n] \) to satisfy
    \[
    \lh{\sigma_n} = n - 1 \land \sigma_n \in T_{\vartheta\concat[n]} \land \Gamma^{\vartheta\concat[n]}_\theta(\sigma_n) \subfun f
    \]    
    By the inductive hypothesis applied to \( \vartheta\concat[n] \), \( f \) must be the image of some path in \( T_{\vartheta\concat[n]} \)  so \( \sigma_n \) must exist for all large enough \( n \).  By \eqref{eq:gamma-rec-def} we have that \( \Gamma^{\vartheta\concat[n]}_\theta(\sigma_n) = \Gamma^{\vartheta}_\theta(\sigma_n) \) and by  \cref{lem:tree-gamma-defined} we know that  \( \Gamma^{\vartheta}_\theta \) is expansionary.  Therefore, \( g = \Union \sigma_n \) is a well-defined path through \( T_{\vartheta} \) with \( \Gamma^{\vartheta}_\theta(g) = f \).  As \( f \) was an arbitrary path through \( T_{\theta} \) this contradicts our assumption.             
\end{proof}

It is worth taking a moment to discuss, what goes wrong when \( \vartheta \) isn't part of the well-founded initial segment of \( U^{+} \). We know it is at least possible to have a failure of surjectivity since \textemdash if we set \( T_{\estr} \) to be a tree with only countably many paths \textemdash its paths can't be surjectively mapped onto the perfect set of paths through the root of our tower (\( T_0 \) only has non-hyperarithmetic paths).  But if we build each \( T_{\theta} \) to have the same paths as \( T_{\theta^{+}} \) where do these extra paths come from?  

In some sense they `come' from infinity.  Consider some \( \sigma_0 \in T_{\theta_0} \) which isn't extended by any path in \( T_{\theta_0} \).  If we map \( \sigma_0 \) down to \( T_{\theta_1} \) with \( \theta_1 \KBless \theta_0 \) then its image \(\sigma_1 = \Gamma^{\theta_0}_{\theta_1} \) is likely to grow in length.  As long as we are in the well-ordered initial segment this isn't a problem since we can only grow \( \sigma_0 \) finitely many times but outside of \( U^{+}_{\leftof f_U}  \) we can have an infinite descending sequence which grows \( \sigma_0 \) into an infinite path through \( T_0 \).

\begin{lemma}\label{lem:is-subg-tower}
The trees \( T_{\vartheta} \) along with the functional \( \Gamma \) and the set  \( U^{+}_{\leftof f_U}  \) form a subgeneric tower of height \( \wck \).  
\end{lemma}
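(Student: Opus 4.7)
The plan is to verify each clause of \cref{def:tower} and \cref{def:subgeneric-tower} in turn, for the data $I = \rho(U^{+}_{\leftof f_U}) = \kleeneO1$, the trees $\{T_{\vartheta}\}$, and the functionals $\Gamma^{\vartheta}_{\theta}$ (identifying each $\vartheta$ with $\rho(\vartheta)$). The first step is to observe that $\kleeneO1$ is a path through $\kleeneO$ of height $\wck$: this is exactly the argument from the proof of \cref{cor:path-through-O}, using the hypothesis on $U$ supplied by \cref{lem:unif-no-hyp-and-jump} together with \cref{lem:kb-wellfounded,lem:kb-to-O}.

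The five clauses of \cref{def:tower} are then essentially repackagings of results already established in this subsection. Clause 1 (uniform computability of $T_{\vartheta}$ from $\zeron{\vartheta}$) is \cref{lem:tree-tower-defined}; clauses 2, 3, and 5 (computability of $\Gamma^{\vartheta}_{\theta}$, its being expansionary with range in $T_{\theta}$, the identity when $\vartheta = \theta$, and the composition identity) are all content of \cref{lem:tree-gamma-defined}; and clause 4 (homeomorphism) is \cref{lem:homeo-almost}, which is precisely where restricting to the well-founded initial segment $U^{+}_{\leftof f_U}$ is essential. For the subgeneric-tower properties, note that $I^{-} = I$ since $\kleeneO1$ has no maximum. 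Conditions 1, 2, 3, and 5 of \cref{def:subgeneric-tower} are local at the step $T_{\beta} \to T_{\beta \Oadd 1}$ and follow immediately from the corresponding parts of \cref{lem:tree-minus-one} (parts 8, 9, 10, and 7, respectively), because $T_{\beta}$ was constructed precisely by that lemma from $T_{\beta \Oadd 1}$ relative to $\zeron{\beta}$.

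The main obstacle is condition 4 of \cref{def:subgeneric-tower}, the splitting condition at limits. Let $\lambda = \rho(\vartheta)$ with $\vartheta \in U$, let $\Theta$ be $\zeron{\beta}$-computable, and assume $T^{\alpha}_{\beta}$ is $\Theta$-splitting over $f^{\beta}$ for every $\alpha \in [\beta, \lambda)$. Given $\sigma \subfun f^{\beta}$, the plan is to find splitting extensions in $T^{\vartheta}_{\beta}$ by passing through a sufficiently large $\vartheta\concat[n]$. By \eqref{eq:gamma-rec-def}, $\Gamma^{\vartheta}_{\beta}$ and $\Gamma^{\vartheta\concat[n]}_{\beta}$ agree on strings of length less than $n$ (for $n$ large enough that $\vartheta\concat[n] \KBgtr \beta$), and by \cref{lem:tree-tower-defined} the trees $T_{\vartheta}$ and $T_{\vartheta\concat[n]}$ agree on strings of length at most $l(\vartheta\concat[n])$; hence images under $\Gamma^{\vartheta\concat[n]}_{\beta}$ of sufficiently short strings in $T_{\vartheta\concat[n]}$ lie in $T^{\vartheta}_{\beta}$ as well. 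The splitting hypothesis at $\alpha = \rho(\vartheta\concat[n])$ supplies $\Theta$-splitting extensions of $\sigma$ in $T^{\vartheta\concat[n]}_{\beta}$. The key technical point that must be verified is that such witnesses can be chosen with preimages in $T_{\vartheta\concat[n]}$ of bounded length, so those preimages also lie in $T_{\vartheta}$; here part 11 of the pulldown lemma should do the work, since it localizes splitting witnesses to preimages of a fixed short length $4i_{\Theta} + 2$ determined by an index for $\Theta$. Their images under $\Gamma^{\vartheta}_{\beta}$ then give the desired $\Theta$-splitting extensions in $T^{\vartheta}_{\beta}$, completing the verification of condition 4 and hence the proof.
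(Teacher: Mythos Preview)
Your overall plan matches the paper's: the tower axioms are read off \cref{lem:tree-tower-defined,lem:tree-gamma-defined,lem:homeo-almost}, conditions \ref{def:subgeneric-tower:jump}, \ref{def:subgeneric-tower:non-trivial}, \ref{def:subgeneric-tower:incompat}, \ref{def:subgeneric-tower:genericity} of \cref{def:subgeneric-tower} come straight from \cref{lem:tree-minus-one}, and condition \ref{def:subgeneric-tower:splitting-subtree} is the real work. Two points deserve correction.

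First, you only treat the case where \( \vartheta \in U \) is a limit. Condition \ref{def:subgeneric-tower:splitting-subtree} must also be checked when \( \vartheta \) is a successor; this is the easy case and follows directly from part \ref{lem:tree-minus-one:splitting} of \cref{lem:tree-minus-one} applied at the step \( \xi \to \xi^{+} = \vartheta \), but it should be stated.

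Second, and more substantively, your description of how part \ref{lem:tree-minus-one:splitting} is invoked in the limit case is not quite right. The index \( i \) in that clause is \emph{not} ``an index for \( \Theta \)'': part \ref{lem:tree-minus-one:splitting} refers to \( i \)-splitting relative to \( X = \zeron{\vartheta\concat[n]} \), and what we actually need split is \( \Theta \circ \Gamma^{\vartheta\concat[n]}_{\beta} \) (so that splitting in \( T_{\vartheta\concat[n]} \) corresponds to \( \Theta \)-splitting of images in \( T_{\beta} \)). The key maneuver, which you have not supplied, is to use the fact that \( \vartheta\concat[n] \) is recoverable from \( \zeron{\vartheta\concat[n]} \) to obtain a \emph{single} index \( i \) with \( \recfnl{i}{\zeron{\vartheta\concat[n]}}{\sigma} = \Theta\bigl(\Gamma^{\vartheta\concat[n]}_{\beta}(\sigma)\bigr) \) uniformly in \( n \). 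Only after fixing such an \( i \) can one choose \( n \) so that \( l(\vartheta\concat[n]) < 4i+2 < l(\vartheta\concat[n+1]) \); this window placement is what makes part \ref{lem:tree-minus-one:splitting} applicable (it requires \( 4i+2 > l \)) while simultaneously ensuring that \( \Gamma^{\vartheta}_{\xi} \) is the identity on strings of length \( 4i+3 \) (where \( \xi = (\vartheta\concat[n])^{+} \)), so that the splitting witnesses \( g\restr{4i+2}\concat[0], g\restr{4i+2}\concat[1] \) genuinely lie in \( T_{\vartheta} \) and have the same images under \( \Gamma^{\vartheta}_{\beta} \) and \( \Gamma^{\xi}_{\beta} \). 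Without this uniformity-then-window-choice step, there is no guarantee the short preimages you want exist at the level you need them.
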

Formally speaking, we should take \( I \) to be the  image of \( U^{+}_{\leftof f_U}  \) under \( \rho \) and likewise replace strings from \( U^{+}_{\leftof f_U}  \) with their images under \( \rho \).  However, we continue to elide the distinction between strings in \( U^{+} \) and their images under \( \rho \).  
\begin{proof}
We've already established every aspect of \hyperref[def:tower]{the definition of a tower} in \cref{lem:tree-tower-defined,lem:tree-gamma-defined,lem:homeo-almost} and we know that \( U^{+}_{\leftof f_U}  \) is (equivalent to) a path through \( \kleeneO \).  This leaves only establishing that we satisfy \cref{def:subgeneric-tower}, i.e., that our tower is a subgeneric tower.


Part \ref{def:subgeneric-tower:incompat} of \cref{def:subgeneric-tower} \textemdash the fact that distinct paths through  \( [T_{\vartheta}] \) are Turing incompatible over \( \zeron{\vartheta} \)  \textemdash  is directly guaranteed by \hyperref[lem:tree-minus-one]{the pulldown lemma} as is part \ref{def:subgeneric-tower:jump} \textemdash the fact that paths through \( T_{\vartheta^{+}} \) are  the jumps of paths through \( T_{\vartheta} \) (adjoining \( \zeron{\vartheta^{+}} \) and \( \zeron{\vartheta} \) respectively).  Since all of our trees have multiple paths part \ref{def:subgeneric-tower:non-trivial} \textemdash the fact that paths through \( T_{\vartheta} \) aren't computable in \( \zeron{\vartheta} \)  \textemdash follows from part  \ref{def:subgeneric-tower:incompat}.   \hyperref[lem:tree-minus-one]{The pulldown lemma} also ensures that every path through \( T_{\vartheta} \) either meets or strongly avoids on \( T_{\vartheta} \) every set of strings r.e. in \( \zeron{\vartheta} \) vindicating part \ref{def:subgeneric-tower:genericity} of \cref{def:subgeneric-tower}.  This only leaves part \ref{def:subgeneric-tower:splitting-subtree} to demonstrate.

Suppose  \( \theta \KBless \vartheta \), \( \Omega \) is a \( \zeron{\theta} \) computable functional and \( T^{\xi}_{\theta} \) is \( \Omega \) splitting over \( f \in [T_{\theta}] \) for all \( \xi \in [\theta, \vartheta) \) \textemdash where \( T^{\xi}_{\theta} \) is the image of \( T_{\xi} \) under \( \Gamma^{\xi}_{\theta} \).  Part \ref{def:subgeneric-tower:splitting-subtree} of \cref{def:subgeneric-tower} requires we demonstrate that \( T^{\vartheta}_\theta \) is \( \Omega \) splitting over \( f \). 

If \( \vartheta \) is a successor then the claim follows immediately by part \ref{lem:tree-minus-one:splitting} of \hyperref[lem:tree-minus-one]{the pulldown lemma} since that ensures that if  \( T_{\epsilon} \) is \( \Omega \) splitting over \( g \in [T_{\xi}] \) and \( \vartheta=\xi^{+} \) then \( T^{\vartheta}_{\xi} \) is \( \Omega \) splitting over \( g \).  By taking \( g \) to satisfy \( \Gamma^{\xi}_{\theta}(g) = f \) and using the assumption that \( T^{\xi}_{\theta} \) is \( \Omega \) splitting over \( f \) gives us the desired result.  Thus, we can assume \( \vartheta \in U \) and that there is some \( n_0 \) with \( \theta \KBless \vartheta\concat[n_0] \).           

Fix some index \( i > l(\vartheta\concat[n_0]) \) such that \( \recfnl{i}{\zeron{\vartheta\concat[n]}}{\sigma} = \Omega(\Gamma^{\vartheta\concat[n]}_{\theta}(\sigma)) \) for all \( n \geq n_0 \) \textemdash recall that we can effectively recover \( \vartheta\concat[n] \) from \( \zeron{\vartheta\concat[n]} \).  As \( l(\vartheta\concat[n]) \equiv 0 \pmod{4} \)  we can choose \( n > n_0 \) such that \( l(\vartheta\concat[n]) < 4i+2 < l(\vartheta\concat[n+1]) \) and \( g \in [T_{\vartheta}] \) such that \( \Gamma^{\vartheta}_{\theta}(g) = f \).  We now want to argue that the \( i \)-splitting pair \hyperref[lem:tree-minus-one]{the pulldown lemma} tries to capture when building \( T_{\vartheta\concat[n]} \) is the image of elements in \( T_{\vartheta} \) so to that end let \( \xi  \) be the successor of \( \vartheta\concat[n] \).  As \( \xi \supfun \vartheta\concat[n+1] \) by \cref{lem:l-works} we have \( l^{\vartheta}_{\xi} \geq l(\vartheta\concat[n+1]) > 4i+2 \) so by \cref{lem:tree-gamma-defined} 
\[
 \tau \in \set{g\restr{4i+2}, g\restr{4i+2}\concat[0], g\restr{4i+2}\concat[1]} \implies \Gamma^{\vartheta}_{\theta}(\tau) = \Gamma^{\xi}_{\theta}\left(\Gamma^{\vartheta}_{\xi}(\tau)\right) = \Gamma^{\xi}_{\theta}(\tau)
\]
By the inductive hypothesis applies to \( T^{\vartheta\concat[n]}_{\theta} \) there must be \( \sigma_0, \sigma_1 \in T_{\vartheta\concat[n]} \) extending \( \Gamma^{\xi}_{\vartheta\concat[n]}(g\restr{4i+2}) \) \textemdash by the above this maps down to an initial segment of \( f \)  \textemdash whose images under \( \Omega \circ \Gamma^{\vartheta\concat[n]}_{\theta} = \recfnl{i}{\vartheta\concat[n]}{} \) disagree.  Since we build \( \Gamma^{\xi}_{\vartheta\concat[n]} \) via \hyperref[lem:tree-minus-one]{the pulldown lemma}  part \ref{lem:tree-minus-one:splitting} of that lemma ensures that the images of \( g\restr{4i+2}\concat[0], g\restr{4i+2}\concat[1] \) under  \( \Omega \circ \Gamma^{\vartheta\concat[n]}_{\theta} \) are incompatible.    Since we can choose our index \( i \) to be arbitrarily large \textemdash and therefore \( g\restr{4i+2} \) arbitrarily long \textemdash  this entails that \( T^{\vartheta}_{\theta} \) is \( \Omega \) splitting over \( f \).       
\end{proof}

We can finally complete our proof of \hyperref[thm:non-standard]{the non-standard version of the main theorem}.  By \cref{prop:subgeneric-tower-properties}, since \( T_0  \) \textemdash i.e.,  \(  T_{\eta(\estr)} \) \textemdash  is the root of a subgeneric tower of height \( \wck \), we know that \( [T_0] \) is uniformly subgeneric.  This leaves only proving that \( [T_0] \) contains a subset homeomorphic to \( \baire \) via an expansionary function.  Since \( T_{\estr} = \wstrs \) if is enough to observe that by  \cref{lem:tree-gamma-defined}  \( \Gamma^{\estr}_{\eta(\estr)} \) is an expansionary homeomorphism of \( [T_{\estr}] \) with a subset of \( [T_0] \).  

We do note one complication here.  Recall from \cref{ssec:defining-zero-theta} that since \( U^{+} \) isn't actually well-founded under \( \KBless \) there won't be a unique way to define \( \zeron{\estr} \) and thus there won't be a unique definition of  \( \Gamma^{\estr}_{\eta(\estr)} \).  However, all the proofs given work for any of the paths through \( U \), i.e., any of the ways we might assign the sets \( \zeron{\vartheta} \) for \( \vartheta \in U^{+} \).  To establish \cref{thm:non-standard} it is enough to use any version of \( \Gamma^{\estr}_{\eta(\estr)} \)  defined by a path through \( U \) but it does raise the following question.

\begin{question}
Does the image of \( T_{\estr} \) under \( \Gamma^{\estr}_{\eta(\estr)} \) depend on the choice of definition of \( \zeron{\estr} \)?  
\end{question} 

As we only sketched a proof of \cref{thm:standard} in \cref{ssec:build-std-tower} we now show the construction used to prove \cref{thm:non-standard} can be modified to prove that result as well.

\subsection{Standard Verification}

The only real obstacle we face here is showing that we can translate ordinal notations into computable trees (of the appropriate form).  Even though we know that every ordinal below \( \wck \) can be identified with an element of the tree \( U^{+} \) we built above we can't guarantee that the restriction of \( U^{+} \) to the elements less than or equal to some \( \vartheta  \in U^{+} \) will also be a tree.

\begin{lemma}\label{lem:notation-to-tree}
For all limit notations \( \alpha \) there is a computable tree \( U \) and a computable function \( \rho \) which bijectively maps \( U^{+} \) under \( \KBless \)  to \( \set{\beta}{\beta \Oleq \alpha} \) in an order-preserving way.  This result holds with total uniformity.  
\end{lemma}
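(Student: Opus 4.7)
The plan is to build $(U_\alpha, \rho_\alpha)$ by effective transfinite recursion on $\alpha$, packaged via the recursion theorem to obtain uniformity in $\alpha$. We place $\estr \in U_\alpha$ and set $\rho_\alpha(\estr) = \alpha$, so that $\estr$ is the $\KBless$-maximum of $U^{+}_\alpha$. Each child $\estr\concat[n]$ is automatically in $U^{+}_\alpha$ and heads a block consisting of its $U^{+}$-extensions; the block is a singleton when $\estr\concat[n] \nin U_\alpha$, and has order type $\Ohgt{\gamma} + 1$ when we graft a copy of $U_\gamma$ (for some limit $\gamma \Oless \alpha$) beneath $\estr\concat[n]$. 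The task thus reduces to choosing the grafts so that the block order types sum to $\Ohgt{\alpha}$.

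The inductive step splits into two cases. Case (i): $\alpha$ is the $\Oless$-supremum of limit notations strictly below it. We then effectively thin the sequence $\Olim{\alpha}{n}$ to a strictly $\Oless$-increasing cofinal subsequence of limits $\lambda_0 \Oless \lambda_1 \Oless \cdots$ satisfying $\sum_n (\Ohgt{\lambda_n} + 1) = \Ohgt{\alpha}$; such a thinning is obtainable by a Cantor normal form analysis of $\alpha$'s notation, selecting $\lambda_n$ that grow rapidly enough for the sum to absorb intermediate material. We then declare $\estr\concat[n] \in U_\alpha$ and graft the inductively constructed $U_{\lambda_n}$ beneath. Case (ii): $\Ohgt{\alpha} = \Ohgt{\beta} + \omega$ for some notation $\beta$ that is either $0$ or a limit, with $\beta$ recovered from the (finite or bounded) set of limit entries of $\Olim{\alpha}{n}$. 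We then set $U_\alpha = \{\estr\} \cup \{\estr\concat[0]\concat\sigma : \sigma \in U_\beta\}$ (using the convention $U_0 = \eset$) and leave $\estr\concat[n] \nin U_\alpha$ for $n \geq 1$; a direct block-order-type computation gives $|U^{+}_\alpha| = (\Ohgt{\beta} + 1) + \omega + 1 = \Ohgt{\alpha} + 1$. In both cases, $\rho_\alpha$ is defined inductively using $\rho_{\lambda_n}$ (resp.\ $\rho_\beta$) composed with a computable shift identifying block positions with the corresponding interval of notations $\Oless \alpha$ via effective notation arithmetic ($\Oadd$ and the effective ordering on $\kleeneO$).

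The main obstacle is that the dichotomy between Case (i) and Case (ii) is not directly decidable from $\alpha$: whether the limit entries of $\Olim{\alpha}{n}$ are cofinal in $\alpha$ or eventually bounded by some $\beta \Oless \alpha$ is a limit-behavior question. We finesse this by running both alternatives in parallel inside the computable function $q(i)$ to which we apply the recursion theorem: at each stage $n$ we effectively search for a limit notation $\lambda_{n+1}$ with $\lambda_n \Oless \lambda_{n+1} \Oless \alpha$, continuing the Case (i) construction whenever the search succeeds and gracefully degenerating to the Case (ii) shape (with $\beta$ equal to the $\Oless$-supremum of the $\lambda_n$ found so far) when it stalls. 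A fixed point of $q$ supplied by the recursion theorem yields the desired uniform $(U_\alpha, \rho_\alpha)$, and the verification that $\rho_\alpha$ is a computable order-preserving bijection between $U^{+}_\alpha$ under $\KBless$ and $\{\beta \Oleq \alpha\}$ under $\Oless$ then follows by an induction on $\alpha$ that tracks the block decomposition described above.
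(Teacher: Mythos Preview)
Your Case~(i) contains a genuine gap. The claim that one can always thin $\Olim{\alpha}{n}$ to a strictly $\Oless$-increasing cofinal sequence of limits $\lambda_0 \Oless \lambda_1 \Oless \cdots$ with $\sum_n (\Ohgt{\lambda_n} + 1) = \Ohgt{\alpha}$ is false. Take $\Ohgt{\alpha} = \omega^2 \cdot 2$, which is a limit of limits and hence falls under your Case~(i). Any strictly increasing cofinal sequence of limits below $\omega^2 \cdot 2$ must eventually contain two consecutive terms $\lambda_m \Oless \lambda_{m+1}$ both exceeding $\omega^2$; but then already
\[
(\Ohgt{\lambda_m} + 1) + (\Ohgt{\lambda_{m+1}} + 1) \;\geq\; (\omega^2 + \omega + 1) + (\omega^2 + \omega \cdot 2 + 1) \;=\; \omega^2 \cdot 2 + \omega \cdot 2 + 1 \;>\; \omega^2 \cdot 2,
\]
so every later partial sum strictly exceeds $\Ohgt{\alpha}$, and no Cantor normal form manipulation will repair this. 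The same obstruction recurs whenever $\Ohgt{\alpha}$ is an additively decomposable limit of limits, e.g.\ any $\delta + \delta$. The underlying problem is structural: grafting $U_{\lambda_n}$ beneath $\estr\concat[n]$ forces block $n$ to have order type $\Ohgt{\lambda_n} + 1$, i.e.\ to look like an initial segment $[0,\lambda_n]$, whereas for $\rho_\alpha$ to be a bijection block $n$ must realize the \emph{interval} of notations between the previous cumulative endpoint and the next. That interval need not have order type $\Ohgt{\lambda}+1$ for any limit $\lambda \Oless \alpha$, so there is no suitable target for your recursive call.

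The paper sidesteps this entirely by not recursing on $\alpha$. It builds $U$ and $\rho$ together in a single stagewise enumeration: $\rho$ labels each node of $U$ directly with a limit notation $\Oleq \alpha$, the children of a node labelled $\beta$ receive an increasing cofinal sequence of limits below $\beta$ (obtained by repeatedly stripping finite successors from the terms $\Olim{\beta}{m}$), and the extension of $\rho$ from $U$ to the successor notations on $U^{+}\setminus U$ is handled afterward by the mechanism already established in \S\ref{ssec:prob71}. Because the successor positions are filled in only at the end, no block-sum decomposition of $\Ohgt{\alpha}$ ever needs to be solved, and the undecidability of your Case~(i)/(ii) split never arises.
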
 
We don't extend this result to other notations, even though it would be straightforward, because that would require giving up the assumption \textemdash used throughout the construction \textemdash that the `notations' we are working with take the of \( U^{+} \) for some computable tree \( U \). 
\begin{proof}
We define \( U \) and \( \rho \) as the limits of their stagewise approximations. We start at stage \( 0 \) by placing  \( \estr \in U_0  \) and defining \( \rho_0(\estr) = \alpha \).  At every stage \( s > 0    \) we permanently exclude any string with code less than \( s \) not in \( U_{s -1} \)  from ever entering \( U \) and then attend to every \( \sigma \in U_{s -1} \) as follows.  

Let \( \beta = \rho_{s-1}(\sigma) \) and let \( \gamma  \) be \( \rho_{s-1}(\sigma\concat[x]) \) for the largest \( x \) such that \( \sigma\concat[x] \in U_{s -1} \) if any such \( x \) exist and \( 0 \) otherwise.  Define \( \beta'_n \) to be the unique non-successor notation such that \( \Olim{\beta}{n} \) is a finite successor of \( \beta'_n \) \textemdash in other words \( \beta'_n \) is the result of recursively subtracting \( 1 \) from \( \Olim{\beta}{n} \) until we no longer have a successor notation.  

We check if there is some \( m < s \) such that  \( \gamma \Oless \beta'_m \) \textemdash this is computable as all notations below \( \alpha \) must be compatible.  If there is such an \( m \), enumerate \( \sigma\concat[l] \) for some large \( l \) into \( U_s \) and define \( \rho_s( \sigma\concat[l]) = \beta'_m  \) where \( m \) is the least such value.  This completes the task of attending to \( \sigma \) and completes our construction.

We've clearly defined a computable tree \( U \) and computable function \( \rho \) bijecting  \( U \) with \( \set{\beta}{\beta \Oleq \alpha \land \beta \in \kleeneO-}  \), the set of limit notations below or equal to \( \alpha \).  It is easy to check that if we order \( U \) using \( \KBless \) then  \( \rho \) is an order-preserving map and the same argument we used in \cref{ssec:prob71} allows us to extend \( \rho \) to \( U^{+} \) and turn it into an order isomorphism of \( U^{+} \) under \( \KBless \) and \( \set{\beta}{\beta \Oleq \alpha} \).  This completes the proof as the uniformity is evident from the construction.               
\end{proof}   

To prove \cref{thm:standard} we need to show that (with complete uniformity), given \( \alpha \in \kleeneO \) and \( T \Tleq \zeron{\alpha} \)  we can build a computable tree \( T_0 \) such that \( [T_0] \) is uniformly \( \alpha \)-subgeneric and homeomorphic via a \( \zeron{\alpha} \) computable expansionary function to \( [T] \).  In the construction in \cref{ssec:nonstandard-construction}  we were guaranteed that \( T_{\estr} \) would be a nice tree. As we can only apply \hyperref[lem:tree-minus-one]{the pulldown lemma} to nice trees our first task therefore is to transform the tree \( T \) into a nice tree \( \hat{T} \).   

We define \( \zeta(\sigma) \) by  \( \zeta(\estr) = \estr \) and \( \zeta(\sigma\concat[x])=\zeta(\sigma)\concat[0]\concat[x] \) and also define the tree \( \hat{T} = \set{\zeta(\sigma), \zeta(\sigma)\concat[1]}{\sigma \in T} \).  Clearly \( \hat{T} \) is a \( \zeron{\alpha} \) computable tree with \( [\hat{T}] \) homeomorphic to \( [T] \) via the computable expansionary function \( \zeta \).  We'd like to use \cref{lem:notation-to-tree} to define \( U^{+} \) and  set \( T_{\estr} = \hat{T} \) but we can only do this if \( \alpha \) is a limit notation.  If \( \alpha = \lambda \Oadd k \) for some non-zero \( k \) we simply apply \hyperref[lem:tree-minus-one]{the pulldown lemma} \( k \) times (with \( l =0 \)) starting with \( \hat{T} \) to generate the top \( k \) levels in our subgeneric tower and the appropriate expansionary homeomorphism.  If \( \lambda = 0\) then we are done.   

Thus, we can suppose that \( \alpha \) is a limit notation and use \cref{lem:notation-to-tree} to convert \( \alpha \) into \( U^{+} \).  We've already made sure that \( \hat{T} \) is nice and now we define \( T_{\estr} \) from \( \hat{T} \) using \cref{lem:tree-uniform-limit} to ensure that \( T_{\estr} \restr{l(\str{x})} \) is uniformly \( \zeron{\eta(\str{x})} \) computable whenever \( \str{x} \in U^{+} \).  This is the same transformation we used for every level in our tower but the top in the prior construction.  We can now just run that construction (and borrow its verification) to build a subgeneric tower of height \( \alpha \).    

As \( U^{+} \) is actually well-ordered by \( \KBless \) we don't need to worry about multiple definitions of \( \zeron{\sigma} \) and  \( \Gamma^{\estr}_{\eta(\estr)} \)  will be a homeomorphism of \( [T_{\estr}] \) and \( [T_{\eta(\estr)}] \).  Composing this with any expansionary homeomorphisms we build before defining \( T_{\estr} \), e.g. \( \zeta \) or the result of a finite number of applications of the pulldown lemma, verifies the existence of the desired \( \zeron{\alpha} \) computable homeomorphism of \( [T] \) and \( [T_0] \).  This suffices to prove the desired result as the uniformity of this construction is evident.

\appendix
\section{Definition of Ordinal Notations}\label{app:defo}

Taking for granted some effective system for coding successor and limit notations we define \( \kleeneO* \) and \( \kleeneO \) as follows \textemdash where \textbf{only} in this equation we understand membership in \( \kleeneO+ \) and \( \kleeneO- \) to be the computable sets of successor and limit codes respectively.

\begin{align}\label{eq:def-o}
\begin{split}
\alpha \in \kleeneO* \iff (\forall \beta, \beta' & \in \set{\beta \Oleq \alpha} )\Bigl[\beta = 0 \lor \beta \in \kleeneO+ \lor \beta \in \kleeneO- \\
                   & \land  \beta \neq \beta' \implies \beta \Oless \beta' \lor \beta' \Oless \beta \\
                   & \land \beta \in \kleeneO+ \implies \exists(\delta \Oless \beta)\left(\beta = \delta \Oadd 1\right) \\
                   & \land  \beta \in \kleeneO- \land \beta' \Oless \beta \iff \exists(n \in \omega)\left(\beta' \Oleq \Olim{\beta}{n}  \right) \\
                   & \land \beta \in \kleeneO- \implies \forall(n \in \omega)\left(\Olim{\beta}{n}\conv \right) \Bigr] \\        
\end{split} \\
\begin{split}
\alpha \in \kleeneO \iff  \alpha \in \kleeneO* & \land \lnot \exists(f \in \baire)\forall(n \in \omega) \Bigl[ \\
&  f(n) \in \set{\beta \Oleq \alpha} \land f(n) \Oless f(n+1) \Bigr] \\
\end{split}
\end{align}

\printbibliography

\end{document}